\def\@cite#1#2{{\m@th\upshape\bfseries%
[{#1\if@tempswa{\m@th\upshape\mdseries, #2}\fi}]}}
\theoremstyle{plain}
\newtheorem{thm}{Theorem}[section]
\newtheorem{cor}[thm]{Corollary}
\newtheorem{lem}[thm]{Lemma}
\newtheorem{sublem}[thm]{Sublemma}
\theoremstyle{definition}
\newtheorem{defn}[thm]{Definition}
\newtheorem{prob}[thm]{Problem}
\theoremstyle{remark}
\newtheorem{rem}[thm]{Remark}
\numberwithin{equation}{subsection}
\renewcommand{\bold}[1]{\medskip \noindent {\bf #1 }\nopagebreak}
\newcommand{\nc}{\newcommand}
\newcommand{\rnc}{\renewcommand}
\newcommand{\bk}{{\mathbf{k}}}
\newcommand{\eb}[1]{\emph{\textbf{#1}}}
\nc\bA{\mathbb{A}}
\nc\bB{\mathbb{B}}
\nc\bC{\mathbb{C}}
\nc\bD{\mathbb{D}}
\nc\bE{\mathbb{E}}
\nc\bF{\mathbb{F}}
\nc\bG{\mathbb{G}}
\nc\bH{\mathbb{H}}
\nc\bI{\mathbb{I}}
\nc{\bJ}{\mathbb{J}} 
\nc\bK{\mathbb{K}}
\nc\bL{\mathbb{L}}
\nc\bM{\mathbb{M}}
\nc\bN{\mathbb{N}}
\nc\bO{\mathbb{O}}
\nc\bP{\mathbb{P}}
\nc\bQ{\mathbb{Q}}
\nc\bR{\mathbb{R}}
\nc\bS{\mathbb{S}}
\nc\bT{\mathbb{T}}
\nc\bU{\mathbb{U}}
\nc\bV{\mathbb{V}}
\nc\bW{\mathbb{W}}
\nc\bY{\mathbb{Y}}
\nc\bX{\mathbb{X}}
\nc\bZ{\mathbb{Z}}
\nc\cA{\mathcal{A}}
\nc\cB{\mathcal{B}}
\nc\cC{\mathcal{C}}
\rnc\cD{\mathcal{D}}
\nc\cE{\mathcal{E}}
\nc\cF{\mathcal{F}}
\nc\cG{\mathcal{G}}
\rnc\cH{\mathcal{H}}
\nc\cI{\mathcal{I}}
\nc{\cJ}{\mathcal{J}} 
\nc\cK{\mathcal{K}}
\rnc\cL{\mathcal{L}}
\nc\cM{\mathcal{M}}
\nc\cN{\mathcal{N}}
\nc\cO{\mathcal{O}}
\nc\cP{\mathcal{P}}
\nc\cQ{\mathcal{Q}}
\rnc\cR{\mathcal{R}}
\nc\cS{\mathcal{S}}
\nc\cT{\mathcal{T}}
\nc\cU{\mathcal{U}}
\nc\cV{\mathcal{V}}
\nc\cW{\mathcal{W}}
\nc\cY{\mathcal{Y}}
\nc\cX{\mathcal{X}}
\nc\cZ{\mathcal{Z}}
\nc{\dmo}{\DeclareMathOperator}
\dmo{\Tw}{Twist}
\dmo{\CP}{Pres}
\rnc{\Re}{\operatorname{Re}}
\rnc{\Im}{\operatorname{Im}}
\rnc{\span}{\operatorname{span}}
\dmo{\rank}{rank}
\dmo{\End}{End}
\dmo{\Jac}{Jac}
\dmo{\Id}{Id}
\dmo{\lcm}{lcm}
\nc{\Tm}{Teichm\"uller\xspace}
\begin{document}

\title[Cylinder deformations]{Cylinder deformations in orbit closures of translation surfaces}
%
\author[A.Wright]{Alex~Wright}
\address{Math\ Department\\University of Chicago\\
5734 South University Avenue\\
Chicago, IL 60637}
\email{alexmwright@gmail.com}
%

\begin{abstract}
Let $M$ be a translation surface. We show that certain deformations of $M$ supported on the set of all cylinders in a given direction remain in the $GL(2,\bR)$-orbit closure of $M$. Applications are given concerning complete periodicity, field of definition, and the number of of parallel cylinders which may be found on a translation surface in a given orbit closure.

The proof uses Eskin-Mirzakhani-Mohammadi's recent theorem on orbit closures of translation surfaces, as well as results of Minsky-Weiss and Smillie-Weiss on the dynamics of horocycle flow. 
\end{abstract}

\maketitle
\thispagestyle{empty}



\section{Introduction}\label{S:intro}
\bold{Context.} In this paper, we will demonstrate new and direct connections between the orbit closure of a translation surface and the flat geometry of cylinders on this translation surface.   

Several celebrated results already draw connections between the orbit closure of a translation surface and the cylinders on the translation surface.
\begin{itemize}
\item The Siegel-Veech formula establishes such a connection asymptotically in counting problems \cite{V4, EMa}.
\item The Veech Dichotomy shows in particular that for translation surfaces in closed orbits, a very strong form of complete periodicity holds \cite{V}.
\item Work of Calta and also very recent work of Lanneau and Nguyen show that in certain special orbit closures in low genus every translation surface is completely periodic \cite{Ca,LN}.
\end{itemize} 

Furthermore, the work of Smillie-Weiss \cite{SW2} gives that every \emph{horocycle flow} orbit closure contains a periodic translation surface (Theorem \ref{T:SW} below).

\bold{Main result.}The horocycle flow is defined as part of the $GL(2,\bR)$--action,
$$u_t=\left(\begin{array}{cc} 1&t\\0&1\end{array}\right)\subset GL(2,\bR).$$ 
We will also be interested in the vertical stretch, 
$$a_t=\left(\begin{array}{cc}1&0\\0& e^t\end{array}\right)\subset GL(2,\bR).$$ 

Given a collection $\cC$ of horizontal cylinders on a translation surface $M$, we define $u_t^\cC (M)$ to be the translation surface obtained by applying the horocycle flow to the cylinders in $\cC$ but not to the rest of $M$. This deformation of $M$ can be understood very concretely by expressing $M$ as a collection of polygons, including a rectangle for each cylinder in $\cC$, with parallel edge identifications. In this case, $u_t^\cC (M)$ is obtained by letting $u_t$ act linearly on the rectangles which give cylinders in $\cC$ but not on the remaining polygons, and then regluing. 

We call $u_t^\cC (M)$ the \emph{cylinder shear}. Similarly we define $a_t^\cC (M)$ by applying $a_t$ only to the cylinders in $\cC$, and we call $a_t^\cC (M)$ the \emph{cylinder stretch}. Both cylinder shear and stretch depend on the choice of a set $\cC$ of horizontal cylinders. Our main result is

\begin{thm}[The Cylinder Deformation Theorem]\label{T:main}
Let $M$ be a translation surface, and let $\cC$ be the collection of all horizontal cylinders on $M$. Then for all $s, t\in \bR$, the surface $a_s^\cC(u_t^\cC (M))$ remains in the $GL(2,\bR)$--orbit closure of $M$.
\end{thm}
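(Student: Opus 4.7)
The plan is as follows. By the Eskin-Mirzakhani-Mohammadi theorem, the orbit closure $\cM := \overline{GL(2,\bR)\cdot M}$ is an affine invariant submanifold of the stratum: in period coordinates $H^1(M,\Sigma;\bR^2)$, its tangent space $T_M\cM$ is cut out by real-linear equations and is $GL(2,\bR)$-invariant. Since the $SO(2)$-subaction on the coefficient $\bR^2 \cong \bC$ is by complex multiplication, $T_M\cM$ is in fact a complex subspace of $H^1(M,\Sigma;\bC)$. Let $\sigma_\cC := \sum_{C\in\cC} h(C)\,[\gamma(C)]^* \in H^1(M,\Sigma;\bR)$, where $h(C)$ is the height of $C$ and $[\gamma(C)]^*$ is the Poincar\'e-Lefschetz dual of its core curve; this is the infinitesimal generator of $u_t^\cC$, while $i\sigma_\cC$ generates $a_s^\cC$. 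The image of $(s,t)\mapsto a_s^\cC u_t^\cC M$ lies inside the affine real $2$-plane $M + \bR\sigma_\cC + \bR(i\sigma_\cC)$ in period coordinates, so the theorem reduces to showing $\sigma_\cC \in T_M\cM$: the complex structure then automatically yields $i\sigma_\cC \in T_M\cM$, and the full $2$-plane is contained in $\cM$ by local linearity of $\cM$ and continuation.

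The special case to treat first is when $M$ is horizontally completely periodic, meaning the horizontal cylinders of $M$ tile $M$ up to measure zero. In that case, shearing the whole surface horizontally coincides with independently shearing each cylinder rectangle, so $u_t M = u_t^\cC M$, and since $u_t M \in \cM$ we obtain $\sigma_\cC \in T_M\cM$ directly. For a general $M$, apply the Smillie-Weiss theorem (Theorem~\ref{T:SW}) to produce a $U$-periodic surface $N \in \overline{UM} \subseteq \cM$, where $U = \{u_t\}$; such $N$ is horizontally completely periodic, so by the previous sentence $\sigma_{\cC^N}^N \in T_N\cM$ for its own collection $\cC^N$ of horizontal cylinders.

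The crux is the final step: transferring this conclusion back to $M$. Under a horocycle-limit $u_{t_n}M \to N$, the horizontal cylinders of $M$ are preserved (with the same heights and core-curve homology classes) as a subcollection $\cC^M \subseteq \cC^N$, while additional cylinders of $N$ may arise from horizontal strips of $M$ that close up in the limit. Parallel transport along $u_{t_n}$ via the Gauss-Manin connection identifies $\sigma_\cC^M \in T_M\cM$ with the partial sum $\sigma_{\cC^M}^N \in H^1(N,\Sigma;\bR)$, so the task becomes showing that this partial sum --- rather than only the full sum $\sigma_{\cC^N}^N$ established above --- lies in $T_N\cM$. This is the main obstacle. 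I would attack it by combining Minsky-Weiss's non-divergence estimates for horocycle flow with a refinement of Smillie-Weiss to obtain a sequence of $U$-periodic surfaces $N_k \in \cM$ converging to $M$ such that the extra cylinders $\cC^{N_k}\setminus\cC^M$ have heights tending to zero. Then $\sigma_{\cC^{N_k}}^{N_k} \to \sigma_\cC^M$ inside the closed subset $T\cM \subset H^1(M,\Sigma;\bC)$, giving $\sigma_\cC^M \in T_M\cM$ and completing the proof.
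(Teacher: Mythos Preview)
Your reduction to showing $\sigma_\cC \in T_M\cM$ is correct, as is the use of Smillie--Weiss to pass to a horizontally periodic $N$ where the cylinders of $M$ persist as a subcollection $\cC^M \subseteq \cC^N$. Your identification of the remaining obstacle --- that one needs the \emph{partial} sum $\sigma_{\cC^M}^N$ in $T_N\cM$, not merely the full sum $\sigma_{\cC^N}^N$ --- is exactly right. However, your proposed attack on this obstacle does not work.

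The problem is that a sequence of horizontally periodic $N_k \in \cM$ converging to $M$ with the heights of the extra cylinders tending to zero cannot exist when $M$ is not itself horizontally periodic. The complement of the horizontal cylinders in $M$ has positive area $A$, and on each $N_k$ this region is filled by the extra cylinders, so $\sum_{C \in \cC^{N_k}\setminus\cC^M} h(C)\,c(C) \to A > 0$. Since the number of disjoint horizontal cylinders on a surface in a fixed stratum is uniformly bounded, if all extra heights tended to zero then some circumference would tend to infinity; but the circumference is a sum of lengths of horizontal saddle connections, so some saddle connection on $N_k$ would become arbitrarily long, contradicting $N_k \to M$ in the stratum. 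No refinement of Minsky--Weiss or Smillie--Weiss will produce such a sequence.

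The paper closes the gap by a different mechanism. Having found the periodic $N$, one notes that $u_{t_n}M$ (for $t_n$ with $u_{t_n}M \to N$) is a small deformation of $N$ inside $\cM$ on which the cylinders of $\cC^M$ remain horizontal while those of $\cC^N\setminus\cC^M$ do not. Hence no cylinder of $\cC^M$ is \emph{$\cM$-parallel} to a cylinder outside $\cC^M$ (two cylinders being $\cM$-parallel means their core curves are collinear as functionals on $T_N\cM$), so $\cC^M$ is a union of $\cM$-parallel equivalence classes. For a single such class $\cC_i$, one performs a \emph{real deformation} of $N$ within $\cM$ (changing only real parts of periods, so all horizontal cylinders persist with constant heights) to make the set of moduli of cylinders in $\cC_i$ rationally independent from the moduli of the remaining horizontal cylinders; this is possible by an elementary argument about linear functionals (Lemma~\ref{L:key1}). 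Once the moduli are independent, a torus-closure argument applied to the full horocycle flow on this deformed periodic surface (Lemma~\ref{L:hclosure}, Corollary~\ref{C:closure}) shows that $u_t^{\cC_i}$ stays in $\cM$, giving $\eta_{\cC_i}\in T_N\cM$. Summing over the classes yields $\sigma_{\cC^M}^N = \sum_i \eta_{\cC_i} \in T_N\cM$, and parallel transport back to $M$ finishes the proof.
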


We will in fact show that these \emph{cylinder deformations} remain in any affine invariant submanifold containing $M$. The key tool is the result of Smillie-Weiss \cite{SW2} mentioned above. Then we appeal to Eskin-Mirzakhani-Mohammadi's recent theorem that the $GL(2,\bR)$--orbit closure of every translation surface is an \emph{affine invariant submanifold} \cite{EM, EMM}, concluding the proof of Theorem \ref{T:main}.

The idea of the Cylinder Deformation Theorem was an outgrowth of conversations with Alex Eskin on the conjecture that affine invariant submanifolds are quasiprojective varieties. This conjecture restricts the types of linear equations on periods coordinates which may define an affine invariant submanifold in a way consistent with the Cylinder Deformation Theorem. We are profoundly grateful and deeply indebted to both Alex Eskin and Martin M\"oller for many helpful conversations on this conjecture. 

\begin{rem}
In genus 2, it is possible to recover Theorem \ref{T:main} from McMullen's classification of orbit closures \cite{Mc5}.
\end{rem}

\begin{rem}
Other interesting cylinder deformations can often be shown to remain in the orbit closure by considering the orbit closure of $u_t^\cC$ (see Corollary \ref{C:closure}). However, if $M$ has several horizontal cylinders, it is not always true that shearing or stretching just one cylinder, or shearing or stretching the different cylinders different amounts, will stay in the orbit closure. For example if $M$ is a lattice surface with more than one horizontal cylinder, then shearing just one horizontal cylinder will not remain in the orbit closure of $M$ (which is equal to the orbit of $M$ in this case). 
\end{rem}

\begin{rem}
There is no known algorithm to find all the cylinders in a given direction on a translation surface. Nonetheless, Alex Eskin has pointed out that the Cylinder Deformation Theorem might allow rigorous computer aided proofs that many translation surface have large orbit closures. Such a computer program might look for directions where all the cylinders can be determined (for example directions in which the translation surface is composed of cylinders and tori), and then use the Cylinder Deformation Theorem to produce vectors in the tangent space. 
\end{rem}

\bold{Complete periodicity.} Over any affine invariant submanifold $\cM$, there is a natural bundle $H^1_{rel}$ whose fiber over a translation surface $M$ is $H^1(M, \Sigma; \bC)$ (the cohomology of $M$ \emph{rel} $\Sigma$), where $\Sigma$ is the set of singularities of $M$. The tangent bundle $T(\cM)$ of $\cM$ is naturally a flat subbundle of $H^1_{rel}$. There is a natural map $p: H^1(M, \Sigma; \bC)\to H^1(M, \bC)$. 

A translation surface which is the union of cylinders in some direction is said to be \emph{periodic} in that direction.  A translation surface is said to be \emph{completely periodic} if whenever there is a cylinder in some direction, then that direction is periodic. 
\begin{thm}\label{T:CP}
If $\dim_\bC p(T(\cM)) = 2$, then every translation surface in $\cM$ is completely periodic.
\end{thm}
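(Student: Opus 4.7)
The plan is to argue by contradiction. Assume some $M\in\cM$ has a cylinder in some direction but is not periodic in that direction. Since $\cM$ is $GL(2,\bR)$-invariant, I may rotate so that this direction is horizontal, giving a nonempty family $\cC=\{C_1,\dots,C_n\}$ of horizontal cylinders (with heights $h_k$ and core curves $\gamma_k$) together with at least one horizontal minimal component $N\subset M$. Write $\omega$ for the holomorphic $1$-form underlying $M$ and $\omega_x,\omega_y$ for its real and imaginary parts.

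By Theorem \ref{T:main}, the infinitesimal cylinder shear
\[
v^\cC := \tfrac{d}{dt}\big|_{t=0} u_t^\cC(M)
\]
lies in $T_M(\cM)$. Read off from period coordinates, $v^\cC$ is a real class in $H^1(M,\Sigma;\bR)$ that evaluates on a relative $1$-cycle $\alpha$ as $\sum_k h_k(\alpha\cdot\gamma_k)$; so $p(v^\cC)\in H^1(M;\bR)\subset H^1(M;\bC)$ is Poincar\'e dual to $\sum_k h_k[\gamma_k]$. On the other hand, $p(T(\cM))$ always contains the four $GL(2,\bR)$-tangent directions $[\omega_x],[\omega_y],i[\omega_x],i[\omega_y]$, which together span the $2$-complex-dimensional subspace $\bC[\omega_x]+\bC[\omega_y]$ (the classes $[\omega_x],[\omega_y]$ being $\bR$-linearly independent for any translation surface). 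The hypothesis $\dim_\bC p(T(\cM))=2$ then forces equality, so $p(v^\cC)$ is a real class in $\bC[\omega_x]+\bC[\omega_y]$ and can be written $p(v^\cC)=a[\omega_x]+b[\omega_y]$ with $a,b\in\bR$.

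Two pairings pin down $a$ and $b$. Evaluating both sides on the horizontal core $\gamma_j$ gives $0=a\ell_j$ (horizontal cycles are mutually disjoint), so $a=0$. Cup product with $[\omega_x]$ gives
\[
\mathrm{Area}(\cC)=\sum_k h_k\ell_k=-b\int_M \omega_x\wedge\omega_y=-b\,\mathrm{Area}(M),
\]
so $b\neq 0$. Now I apply $p(v^\cC)=b[\omega_y]$ to any absolute cycle $\alpha$ supported in the interior of $N$: such $\alpha$ is disjoint from every $\gamma_k$, so the left side vanishes and $\int_\alpha\omega_y=0$. Thus $[\omega_y|_N]=0$ in $H^1(N;\bR)$, so $\omega_y|_N=df$ for some $f\colon N\to\bR$. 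Each level set of $f$ is a union of horizontal leaves of $N$, which are dense by minimality and closed by continuity of $f$; every level set is therefore all of $N$, so $f$ is constant and $\omega_y|_N=0$, contradicting the positive area of $N$.

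I expect the delicate step to be this last density argument, which rules out $[\omega_y|_N]=0$ on a minimal component $N$ with saddle-connection boundary; the rest is a short computation inside the constrained $2$-dimensional space $p(T(\cM))$, enabled by the Cylinder Deformation Theorem.
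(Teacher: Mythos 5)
Your argument is correct, but it finishes differently from the paper's and is somewhat longer. Both proofs begin the same way: by Theorem~\ref{T:main}, $\eta_\cC\in T^M(\cM)$; and since $p(T(\cM))$ already contains the $GL(2,\bR)$ tangent directions $\Re(p([\omega]))$ and $\Im(p([\omega]))$, the hypothesis $\dim_\bC p(T(\cM))=2$ would force $p(\eta_\cC)$ into their $\bC$-span. From there you write $p(\eta_\cC)=a[\omega_x]+b[\omega_y]$, get $a=0$ by pairing with a core curve (this is the paper's ``$c_1=0$'' step), show $b\neq 0$ by a cup-product/area computation, and then argue that if $b\neq 0$ then $\omega_y$ restricted to a minimal component $N$ is exact, which is incompatible with minimality via a density-of-leaves argument. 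The paper's Lemma~\ref{L:notinspan} instead establishes the linear independence of $p(\eta_\cC)$, $\Re(p[\omega])$, $\Im(p[\omega])$ directly: Lemma~\ref{L:gamma} uses Poincar\'e recurrence at a generic point off the cylinders to produce an explicit closed curve $\gamma$ disjoint from the cylinders with $\int_\gamma\omega_y\neq 0$; pairing $p(\eta_\cC)$ against $\gamma$ forces the second coefficient to vanish as well, and then $p(\eta_\cC)\neq 0$ gives the contradiction. Your density argument and the paper's recurrence argument encode the same underlying fact in dual forms (you assume the ``bad'' pairing vanishes and derive a contradiction from exactness on $N$; the paper directly exhibits a cycle witnessing non-vanishing). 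The paper's route is more economical: it never needs the cup-product computation of $b$, nor the potential function $f$, nor the small technical care required in your density step (one should observe that since there are only finitely many singularities, saddle connections, and boundary components, a generic level set of $f$ contains a nonsingular point whose leaf is dense; it is not literally true that \emph{every} level set is all of $N$, only that some is, which already forces $f$ constant). With that caveat noted, your proof is sound.
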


\begin{cor}\label{C:prym}
All translation surfaces in all the Prym eigenform loci are completely periodic. 
\end{cor}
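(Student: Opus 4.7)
The plan is to apply Theorem \ref{T:CP}: it suffices to show that for each Prym eigenform locus $\cM$, the tangent space satisfies $\dim_\bC p(T(\cM)) = 2$. Recall (following McMullen) that a Prym eigenform is a translation surface $(M,\omega)$ equipped with a holomorphic involution $\tau$ satisfying $\tau^*\omega = -\omega$, such that the associated Prym variety $P = \Jac(M)^-$ has complex dimension $2$ and admits real multiplication by a quadratic order $\cO_D \hookrightarrow \End(P)$ for which $\omega$ is an eigenform.

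First, I would translate the defining conditions of $\cM$ into three successive linear constraints on $p(T(\cM)) \subset H^1(M,\bC)$. Invariance under $\tau^*$ forces $p(T(\cM)) \subset H^1(M,\bC)^-$, the $(-1)$-eigenspace of $\tau^*$, which has complex dimension $2\dim_\bC P = 4$. Next, the real multiplication action extends to $H^1(M,\bC)^-$, and since $\cO_D \otimes \bR \cong \bR \oplus \bR$, it splits this $4$-dimensional space as a direct sum of two $\cO_D$-eigenspaces of complex dimension $2$ each. Finally, the eigenform condition requires $\omega$ and all nearby deformations in $\cM$ to lie in a single one of these two eigenspaces. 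Combining these constraints gives $\dim_\bC p(T(\cM)) \leq 2$. The matching lower bound is immediate: because $\cM$ is $GL(2,\bR)$-invariant, $p(T(\cM))$ contains the image of the tangent space to the $GL(2,\bR)$-orbit through $(M,\omega)$, which is the complex $2$-plane spanned by $\omega$ and $\bar\omega$.

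Hence $\dim_\bC p(T(\cM)) = 2$, and Theorem \ref{T:CP} concludes the proof. The main obstacle is the bookkeeping at the second and third steps: one must check that ``having real multiplication by $\cO_D$'' and ``being an eigenform for a fixed character'' really are closed conditions whose linearization cuts $p(T(\cM))$ down to the $\omega$-eigenspace, rather than merely holding at $(M,\omega)$ itself. This is established in McMullen's construction of the Prym eigenform loci, where $\cM$ is defined locally by exactly these eigenspace equations in period coordinates, so the infinitesimal picture matches the global one and the dimension count above is rigorous.
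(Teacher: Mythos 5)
Your proof is correct and takes the same approach as the paper: apply Theorem~\ref{T:CP} after observing that the Prym eigenform loci satisfy $\dim_\bC p(T(\cM)) = 2$. The paper treats the corollary as immediate, leaving the dimension count to McMullen's construction (it is mentioned two paragraphs later that the Prym eigenform loci of \cite{Mc2} are precisely the known nontrivial examples with $\dim_\bC p(T(\cM)) = 2$), whereas you spell out the eigenspace decomposition that yields that count: $\tau^*$-anti-invariance cuts $p(T(\cM))$ into the $4$-dimensional space $H^1(M,\bC)^-$, real multiplication by $\cO_D$ splits this into two $2$-dimensional eigenspaces, and the eigenform condition plus $GL(2,\bR)$-invariance pins $p(T(\cM))$ to exactly one of them. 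One cosmetic remark: in genus $2$ the relevant involution is the hyperelliptic one, so $H^1(M,\bC)^- = H^1(M,\bC)$ is already $4$-dimensional and the first reduction step is vacuous; the rest of your dimension count then proceeds as stated.
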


Barak Weiss has pointed out to us that our proof of Theorem \ref{T:CP} and hence also Corollary \ref{C:prym} logically does not depend on Eskin-Mirzakhani-Mohammadi's recent work. In the case of Prym eigenform loci of dimension 5, Corollary \ref{C:prym} is due to Lanneau and Nguyen \cite{LN}; in the genus 2 case it is due to Calta \cite{Ca}; and for closed orbits it is part of the Veech dichotomy \cite{V}.

Lanneau has communicated to the author that there is a completely periodic translation surface $M$ in $\cH(4)^{hyp}$ which is not a lattice surface. Since this $M$ is not a lattice surface, its orbit closure $\cM$ satisfies $\dim_\bC T(\cM) > 2$. In the minimal stratum the map $p$ is an isomorphism (there are no relative periods), so for the example in question $\dim_\bC p(T(\cM))= \dim_\bC T(\cM)>2$.

Lanneau's example shows that there are completely periodic translation surfaces whose orbit closures have $\dim_\bC p(T(\cM)) > 2$. Thus the strongest converse to Theorem \ref{T:CP} which may be hoped for is 

\begin{thm}\label{T:CPconv}
If $\dim_\bC p(T(\cM)) > 2$, then there exist translation surfaces in $\cM$ which are not completely periodic.
\end{thm}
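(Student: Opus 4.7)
I would prove the contrapositive: assume every $M\in\cM$ is completely periodic, and deduce that $\dim_\bC p(T(\cM))=2$. Since $p(T(\cM))$ always contains the $2$-dimensional $GL(2,\bR)$-tangent subspace $\bC[dx]\oplus\bC[dy]$, it suffices to show $p(T_{M_0}(\cM))\subset \bC[dx]\oplus\bC[dy]$ at one convenient point $M_0$.

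First I would produce a horizontally periodic $M_0\in\cM$: Theorem~\ref{T:SW} of Smillie-Weiss (applied in the $GL(2,\bR)$-invariant setting provided by Eskin-Mirzakhani-Mohammadi) gives a periodic surface in $\cM$, which after rotation is horizontally periodic. Write $C_1,\ldots,C_n$ for its horizontal cylinders, $\gamma_j$ for their core curves, and $w_j>0$ for their widths. For any $v\in T_{M_0}(\cM)$, period coordinates let me form a family $M_\epsilon\in\cM$ deforming $M_0$ in direction $v$. Each cylinder $C_j$ persists for small $\epsilon$ as a cylinder $C_j(\epsilon)\subset M_\epsilon$ whose core has period $w_j+\epsilon\,p(v)(\gamma_j)$ and therefore lies in direction $\theta_j(\epsilon)=\arg(w_j+\epsilon\,p(v)(\gamma_j))$.

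The key mechanism: suppose $\theta_{j_0}(\epsilon)=0$ for some $j_0$ while $\theta_k(\epsilon)\neq 0$ for some other $k$. Then $M_\epsilon$ has a horizontal cylinder $C_{j_0}(\epsilon)$ but is not horizontally periodic (the cylinder $C_k(\epsilon)$ is not horizontal), contradicting complete periodicity. Because adding the imaginary horizontal $GL(2,\bR)$-tangent vector shifts $\mathrm{Im}\,p(v)(\gamma_j)$ by $\alpha w_j$ for any $\alpha\in\bR$, this forces $\mathrm{Im}\,p(v)(\gamma_j)/w_j$ to be independent of $j$ for every $v$. Repeating the argument after rotating the horizontal direction, we get, for each periodic direction $\psi$ of $M_0$, the analogous constraint $\mathrm{Im}(e^{-i\psi}p(v)(\gamma^\psi_k))/w^\psi_k$ is independent of $k$. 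By Masur's theorem together with the CP hypothesis, $M_0$ is periodic in a dense set of directions, so $p(v)$ is constrained at a dense family of directions.

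The last step is to aggregate these rotational constraints into the conclusion $p(v)\in\bC[dx]\oplus\bC[dy]$. After subtracting a suitable $GL(2,\bR)$-tangent class to normalize, one shows that the only classes whose perpendicular-to-$\psi$ component is proportional to the width functional for every periodic direction $\psi$ are the $GL(2,\bR)$-tangent classes themselves; here one uses that the span over all periodic directions of the core curves fills out $H_1(M_0;\bR)$ (or at least the portion seen by $p(T(\cM))$, using the $GL(2,\bR)$-invariance of $p(T(\cM))$). The principal obstacle is precisely this final linear-algebraic step: turning the dense family of ``imaginary-part proportional to width'' constraints into the rigid conclusion that $p(v)$ is a complex linear combination of $[dx]$ and $[dy]$. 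Once this is established, $p(T_{M_0}(\cM))=\bC[dx]\oplus\bC[dy]$ and the theorem follows.
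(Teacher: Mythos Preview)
Your approach is genuinely different from the paper's, and it has two real problems, one of which you flag and one of which you do not.

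\textbf{The unflagged gap: the ``key mechanism'' is not justified.} You assert that if $C_{j_0}(\epsilon)$ is horizontal while $C_k(\epsilon)$ is not, then $M_\epsilon$ is not horizontally periodic. But a horizontally periodic surface can perfectly well carry cylinders in non-horizontal directions (the square torus is periodic in every rational direction). What you would actually need is that the horizontal cylinders on $M_\epsilon$ fail to cover $M_\epsilon$, and ``$C_k(\epsilon)$ is not horizontal'' does not give this: nothing prevents \emph{new} horizontal cylinders, not among your persisted $C_j(\epsilon)$, from appearing on $M_\epsilon$ and filling the surface. Ruling this out requires a genericity/countability argument (new horizontal cylinders appear only along countably many real hypersurfaces) together with some area bookkeeping showing the remaining horizontal $C_j(\epsilon)$'s do not cover; none of this is supplied.

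\textbf{The flagged gap is the heart of the matter.} Even granting the mechanism, you only obtain, for each periodic direction $\psi$, that the $\psi^\perp$-component of $p(v)$ restricted to the span of the core curves in direction $\psi$ is proportional to the width functional. Turning infinitely many such Lagrangian-restricted constraints into the rigid conclusion $p(v)\in\span_\bC(\Re[\omega],\Im[\omega])$ is exactly the hard step, and you do not carry it out. Density of periodic directions alone does not do it: the constraints live on different, varying subspaces of homology and do not obviously assemble.

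\textbf{What the paper does instead.} The paper proceeds directly rather than by contrapositive, and avoids both issues. Using the machinery of Section~\ref{S:rank} (in particular Theorem~\ref{T:manyC}, which relies on the Avila--Eskin--M\"oller symplecticity of $p(T(\cM))$), one finds $M\in\cM$ with at least two equivalence classes of $\cM$--parallel horizontal cylinders. The locus in $\cM$ where one fixed class stays horizontal has real codimension~$1$ (Lemma~\ref{L:ds}), while the locus where two or more classes are simultaneously horizontal is a countable union of real codimension~$\geq 2$ sets (Corollary~\ref{C:notmany}). Hence some deformation $M'$ has exactly one $\cM$--parallel class of horizontal cylinders; since that class did not cover $M$, it does not cover $M'$, so $M'$ has a horizontal cylinder but is not horizontally periodic. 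This codimension argument is precisely the ``genericity'' input your mechanism is missing, packaged through the invariant notion of $\cM$--parallelism rather than through ad hoc tracking of individual cylinders.
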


There are many examples of orbit closures $\cM$ with $\dim_\bC p(T(\cM)) = 2$ coming from Teichm\"uller curves (closed $SL(2,\bR)$--orbits) and ramified covering constructions. The first examples not of this form were discovered in genus 2 independently by McMullen and, from a different perspective, Calta \cite{Mc, Ca}. McMullen later generalized his approach to provide examples in genus $3, 4$, and $5$ \cite{Mc2}, giving the Prym eigenform loci mentioned above. There are no other currently known affine invariant submanifolds $\cM$ with $\dim_\bC p(T(\cM)) = 2$.

It is notable that Theorem \ref{T:CP} holds in all genera, whereas the methods of Calta, Lanneau and Nguyen, and the work of McMullen on flux \cite{Mc6}, exploit low genus phenomena. (McMullen has pointed out to us that some of his methods do extend to higher genus, see for example \cite[Thm. 1.1]{McCas}.)

On the one hand, this may be an indication that no high genus affine invariant submanifolds $\cM$ with $\dim_\bC p(T(\cM)) = 2$ exist which do not arise from covering constructions. On the other hand, Mirzakhani conjectures (see \cite{W3}) that any orbit closure $\cM$ which does not arise from a covering construction must satisfy $\dim_\bC p(T(\cM)) = 2$, and so if any genuinely new orbit closures are found we expect them to have this property.

\bold{Field of definition.} The field of definition of an affine invariant submanifold $\cM$ is the smallest subfield of $\bR$ such that $\cM$ can be defined in local period coordinates by linear equations with coefficients in this field. All linear equations in this paper will be assumed to be homogeneous (that is, have zero constant term). The field of definition of $\cM$ is denoted $\bk(\cM)$.

\begin{thm}[Wright, Thm. 1.1 in \cite{W3}]
The field of definition of any affine invariant submanifold $\cM$ is a number field of degree at most the genus. Moreover, it can be computed as the intersection of the holonomy fields of translation surfaces in $\cM$. 
\end{thm}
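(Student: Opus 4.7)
My plan is to establish both assertions by analyzing the $\bk(\cM)$-linear structure that the defining equations of $\cM$ impose on the absolute tangent space $p(T(\cM))\subseteq H^1(M;\bC)$, and then to combine the symplectic intersection form on $H^1(M;\bR)$ with Galois-theoretic considerations to bound the degree.

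For the identification $\bk(\cM)=\bigcap_{M\in\cM}\bk(M)$, one direction is straightforward: given defining linear equations for $\cM$ in period coordinates with coefficients in $\bk(\cM)$, restrict to the relations involving only absolute periods (equivalently, push forward along $p$). The absolute periods of any $M\in\cM$ satisfy this $\bk(\cM)$-linear system, and unwinding the Kenyon--Smillie definition of the holonomy field yields $\bk(\cM)\subseteq\bk(M)$. The reverse inclusion for the intersection follows from a genericity argument: each $\bQ$-linear relation on absolute periods that is \emph{not} already implied by the defining equations of $\cM$ cuts out a proper real-analytic subvariety of $\cM$; after taking a countable union over potential relations and passing to the complement, a generic $M\in\cM$ has holonomy field exactly $\bk(\cM)$, so the intersection collapses to $\bk(\cM)$.

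The degree bound $[\bk(\cM):\bQ]\le g$ is the crux of the theorem. The plan is to realize $p(T(\cM))$ as the complex extension of a $\bk(\cM)$-subspace $V\subseteq H^1(M;\bk(\cM))$ of some $\bk(\cM)$-dimension $d$. For each embedding $\sigma\colon\bk(\cM)\hookrightarrow\bC$, the Galois conjugate $V^\sigma\subseteq H^1(M;\bC)$ also has $\bC$-dimension $d$. The key claim is that distinct conjugates $V^\sigma$ are pairwise orthogonal with respect to the cup-product pairing on $H^1(M;\bC)$. Granting this, the sum $\bigoplus_\sigma V^\sigma$ sits inside $H^1(M;\bC)$ with $\bC$-dimension $k\cdot d$, where $k=[\bk(\cM):\bQ]$, so $k\cdot d\le 2g$. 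Combined with $d\ge 2$ (which follows from the $GL(2,\bR)$-invariance of $\cM$, as $p(T(\cM))$ contains the two-dimensional tangent to the $GL(2,\bR)$-orbit), this yields $k\le g$.

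The main obstacle is establishing the symplectic orthogonality of distinct Galois conjugates $V^\sigma$. I would approach this by exhibiting elements of the $\bQ$-endomorphism algebra of the symplectic space $H^1(M;\bQ)$ that realize the $\bk(\cM)$-action on $\bigoplus_\sigma V^\sigma$, and showing that these endomorphisms are self-adjoint with respect to the cup-product pairing (up to the standard sign convention). The eigenspace decomposition of a self-adjoint operator on a symplectic space then yields pairwise symplectically orthogonal eigenspaces for distinct real eigenvalues, and the Galois conjugates $V^\sigma$ are precisely these eigenspaces. This argument parallels the classical theory of real multiplications on abelian varieties and the Rosati involution, and is where the main technical work of \cite{W3} lies.
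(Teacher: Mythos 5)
This statement is \emph{cited} from \cite{W3} (it is Theorem~1.1 there); the present paper does not reprove it, so there is no internal proof to check your attempt against. What follows is therefore a comparison with the argument in \cite{W3} itself.

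Your sketch of the degree bound is in the right spirit: \cite{W3} does bound $[\bk(\cM):\bQ]$ by showing that the Galois conjugates of the (real, absolute) tangent space $V=p\bigl(T_\bR(\cM)\bigr)\subset H^1(M;\bR)$ are pairwise symplectically orthogonal and each of dimension at least~$2$, giving $2[\bk(\cM):\bQ]\le 2g$. The substantive content is exactly the orthogonality claim and the fact that $\bk(\cM)$ is totally real; in \cite{W3} these rest on the flat--bundle (monodromy) structure of $p(T(\cM))$ together with Avila--Eskin--M\"oller's symplecticity theorem, rather than on a Rosati-style self-adjoint endomorphism. Your endomorphism-algebra route is not implausible, but you should be explicit that this is precisely where the work lives; as written it is a restatement, not a proof, of the key claim.

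The genuine gap is in your proof of $\bigcap_{M\in\cM}\bk(M)\subseteq\bk(\cM)$. You argue that a generic $M\in\cM$ satisfies no $\bQ$-linear relations on absolute periods beyond those forced by $\cM$, and conclude that such an $M$ has $\bk(M)=\bk(\cM)$. The premise is correct but the conclusion does not follow, and is in fact false: take $\cM$ to be an entire stratum, so $\bk(\cM)=\bQ$. A generic translation surface in the stratum has $2g$ absolute periods that are $\bQ$-linearly independent in $\bR^2$, and the Kenyon--Smillie holonomy field is then the (transcendental!) field generated by the coefficients expressing the remaining $2g-2$ periods in terms of a chosen $\bR$-basis. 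So the generic holonomy field is a large transcendental extension of $\bQ$, not $\bQ$ itself. The intersection still collapses to $\bQ$ only because there exist \emph{special} surfaces (e.g.\ square-tiled ones) with small holonomy field, not because the generic one does. The correct strategy for this inclusion must therefore produce, for each $c\notin\bk(\cM)$, a specific (non-generic) $M\in\cM$ with $c\notin\bk(M)$, or else exhibit an $M$ with $\bk(M)=\bk(\cM)$; ``generic $M$'' is exactly the wrong place to look. In addition, your easy direction $\bk(\cM)\subseteq\bk(M)$ tacitly assumes that $\bk(\cM)$ is already generated by the coefficients of the \emph{absolute}-period equations; justifying that reduction (i.e.\ that the field of definition of $T(\cM)$ agrees with that of $p(T(\cM))$) is itself nontrivial and is part of what the Simplicity Theorem (Thm.~5.1 of \cite{W3}, quoted in Section~\ref{S:field} here) is used for.
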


Here we present a different way of controlling the field of definition, in terms of only part of the flat geometry of a single translation surface in $\cM$.

\begin{thm}\label{T:kM1}
Suppose that $M$ is a translation surface in an affine invariant submanifold $\cM$, and suppose $M$ has at least one horizontal cylinder. ($M$ need not be horizontally periodic.) Let  $\{c_1, \ldots, c_n\}$ be the set of circumferences of horizontal cylinders on $M$. Then if $n>1$, $\bk(\cM)\subset \bQ[c_2/c_1, \ldots, c_n/c_1]$, and if $n=1$, $\bk(\cM)=\bQ$. 
\end{thm}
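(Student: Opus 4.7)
The plan is to apply Theorem~\ref{T:main} to the collection $\cC$ of all horizontal cylinders on $M$ and combine the resulting tangent vector with the framework for bounding $\bk(\cM)$ from~\cite{W3}. A direct calculation shows that the family $u_t^\cC(M) \in \cM$ provided by Theorem~\ref{T:main} is an affine line in period coordinates: $u_t^\cC(M) = \omega_M + t\,v$, where
\[
v(\gamma) \;=\; \sum_{i=1}^n h_i\,(\gamma \cdot \alpha_i) \;\in\; H^1(M,\Sigma;\bR),
\]
$\alpha_i$ is the core curve of $C_i$, and $h_i$ is its height. In particular $v \in T(\cM)$, and $v(\alpha_j) = 0$ for every $j$ since horizontal cycles have zero algebraic intersection with one another.

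Next, since $T(\cM)$ is cut out in $H^1(M,\Sigma;\bC)$ by $\bk(\cM)$-linear equations, any such equation $\ell$ satisfies both $\ell(\omega_M) = 0$ and $\ell(v) = 0$. Writing $\ell$ dually as a cycle $L \in H_1(M,\Sigma;\bk(\cM))$, the two relations read $\omega_M(L) = 0$ (which yields $\bk(\cM)$-linear relations on the periods after splitting into real and imaginary parts) and $\sum_i h_i\,(L\cdot\alpha_i) = 0$ with each intersection number $L\cdot\alpha_i \in \bk(\cM)$. I would then invoke the apparatus of~\cite{W3}: $\bk(\cM)$ is the intersection of holonomy fields, and $p(T(\cM))$ carries a $\bk(\cM)$-module structure in which $[\omega_M]$ is a generator that decomposes over the Galois embeddings of $\bk(\cM)/\bQ$. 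Combining this decomposition with the cylinder shear relation forces every embedding $\sigma \colon \bk(\cM) \hookrightarrow \bR$ to preserve the ratios $c_j/c_1$: otherwise the horizontal part of $\omega_M(L) = 0$, which involves $\sum_i (L\cdot\alpha_i)\,c_i$, would be incompatible under the Galois action with the cylinder shear relation $\sum_i h_i\,(L\cdot\alpha_i) = 0$. Hence $\bk(\cM) \subset F = \bQ[c_2/c_1,\ldots,c_n/c_1]$, and the case $n = 1$ collapses to $\bk(\cM) = \bQ$, since there are then no nontrivial ratios for any Galois action to act on.

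The main obstacle is the final Galois-theoretic step: while Theorem~\ref{T:main} supplies the single tangent vector $v$ (and, by complex linearity, also $iv$) in $T(\cM)$, one must show this is enough to pin down $\bk(\cM)$ up to the horizontal circumferences. This requires the decomposition of $T(\cM)$ as a $\bk(\cM)$-module from~\cite{W3} and careful bookkeeping of Galois conjugates, especially because the $c_i$ themselves typically lie in a transcendental extension of $\bk(\cM)$, so the relevant Galois theory must be carried out over the compositum of $\bk(\cM)$ with $\bQ(c_1,\ldots,c_n)$ rather than over $\bk(\cM)$ itself.
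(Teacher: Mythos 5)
Your proposal identifies the right starting ingredients (Theorem~\ref{T:main} and the explicit form of $\eta_\cC$ in period coordinates), but it diverges from what actually works in a way that leaves an unfillable gap.

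The central problem is that the vector $v = \eta_\cC = \sum_i h_i I_{\alpha_i}$ is \emph{not} defined over $\bQ[c_2/c_1,\ldots,c_n/c_1]$: its coefficients are the heights $h_i$, which are real numbers with no a priori relationship to the circumferences or their ratios. So $v$ cannot on its own constrain $\bk(\cM)$ to the ratio field. The paper sidesteps this by proving the sharper Theorem~\ref{T:kM2}, which works with a single equivalence class $\cC$ of $\cM$-parallel cylinders, and then using not just $\eta_\cC \in T^M(\cM)$ but the full strength of Corollary~\ref{C:closure}: because one knows the entire orbit closure of $u_t^\cC$, one gets a whole family $\sum_i t_i c_i I_{\alpha_i}$ of tangent vectors indexed by tuples $(t_i)$ satisfying the rational linear relations that the moduli $m_i$ satisfy. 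Since that system of rational relations has a nonzero rational solution, one can take $t_i \in \bQ$ and scale by $1/c_1$ to obtain a tangent vector $\eta$ genuinely defined over $\bQ[c_2/c_1,\ldots,c_r/c_1]$ with $p(\eta)\neq 0$ (this is Lemma~\ref{L:eta}). Your approach never accesses this extra degree of freedom.

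The second gap is the Galois-theoretic endgame, which as written is a placeholder. There is no $\bk(\cM)$-module structure on $p(T(\cM))$ with $[\omega_M]$ as a generator in the generality needed here; you are perhaps thinking of real-multiplication structure, which is special to rank one. What the paper actually uses to finish is the Simplicity Theorem from~\cite{W3} (stated as Theorem~5.1 there): the only proper flat subbundles of $T(\cM)$ lie in $\ker p$. Since $p(\eta)\neq 0$, the span of the integral monodromy orbit of $\eta$ must be all of $T^M(\cM)$, and since $\eta$ is defined over the ratio field and monodromy is integral, $T^M(\cM)$ is spanned by vectors over that field; hence $\bk(\cM)$ is contained in it. Theorem~\ref{T:kM1} then follows from Theorem~\ref{T:kM2} by restricting to one equivalence class of $\cM$-parallel horizontal cylinders and enlarging the field: $\bQ[c_2/c_1,\ldots,c_r/c_1]\subset\bQ[c_2/c_1,\ldots,c_n/c_1]$. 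Without the equivalence-class reduction and the Simplicity Theorem, the two relations $\ell(\omega_M)=0$ and $\ell(\eta_\cC)=0$ are far too little information to pin down $\bk(\cM)$.
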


We wish to emphasize the $n=1$ case: If a translation surface has a direction with exactly one cylinder, then the field of definition of its orbit closure is $\bQ$.

Theorem \ref{T:kM1} is a special case of Theorem \ref{T:kM2}, which in particular additionally asserts the equality $\bk(\cM)= \bQ[c_2/c_1, \ldots, c_n/c_1]$ if the horizontal cylinders of $M$ remain parallel on nearby translation surfaces in $\cM$.

\bold{Finding many cylinders.} Let $T^M(\cM)$ denote the tangent space to $\cM$ at the point $M\in \cM$, and denote by $T^M(\cM)^*$ the space of linear functionals on $T^M(\cM)$. Any absolute or relative homology class $\alpha$ on $M$ naturally defines a linear functional $\alpha^*\in T^M(\cM)^*$, since $T^M(\cM)\subset H^1(M,\Sigma; \bC)= H_1(M,\Sigma; \bC)^* $. Continuing our effort to understand the connection between orbit closures and flat geometry, we prove

\begin{thm}\label{T:manyC}
Let $\cM$ be an affine invariant submanifold and set $$k=\frac12\dim_\bC p(T(\cM)).$$ Then there is some horizontally periodic translation surface $M\in\cM$ whose horizontal core curves span a subspace of $T^M(\cM)^*$ of dimension $k$. No set of core curves of parallel cylinders on a translation surface $M\in\cM$ may span a subspace of $T^M(\cM)^*$ of dimension greater than $k$. 

In particular, there is a horizontally periodic translation surface in $\cM$ with at least $k$ horizontal cylinders.  
\end{thm}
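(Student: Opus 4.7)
The theorem has three parts, and the third (existence of a horizontally periodic surface with at least $k$ horizontal cylinders) follows at once from the first two. I therefore prove (A) the upper bound that no set of parallel cylinder core curves on any $M \in \cM$ spans a subspace of $T^M(\cM)^*$ of dimension greater than $k$, and (B) the existence of a horizontally periodic $M \in \cM$ realizing exactly $k$.

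\textbf{Upper bound.} Fix $M \in \cM$ with parallel cylinders; after rotation, assume they are horizontal, with core curves $\gamma_1,\ldots,\gamma_m$ and heights $h_1,\ldots,h_m$. Since each $\gamma_i$ is an absolute class, the functional $\gamma_i^*$ factors through the projection $T^M(\cM) \to U := p(T^M(\cM)) \subset H^1(M,\bC)$, and the span of $\{\gamma_i^*\}$ in $T^M(\cM)^*$ equals its span in $U^*$; here $\dim_\bC U = 2k$. By standard results on affine invariant submanifolds, the cup product restricts to a symplectic form on $U$. Since the $\gamma_i$ are pairwise disjoint, $L := \operatorname{span}_\bC\{\gamma_i\}$ is isotropic in $H_1(M,\bC)$ under the intersection form. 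Via the symplectic duality $U \cong U^*$, the span of $\{\gamma_i^*\}$ in $U^*$ corresponds to the image of $L$ in $U$ obtained by Poincar\'e duality followed by the symplectic projection $H^1(M,\bC) \to U$. The Cylinder Deformation Theorem supplies the shear tangent vector $\xi := \sum_i h_i \gamma_i^\vee \in T^M(\cM) \cap H^1(M,\bR) \subset U$, where $\gamma_i^\vee$ denotes the Poincar\'e dual of $\gamma_i$. Combining $\xi \in U$ with the isotropy of $L$ (and, as needed, applying the CDT to subcollections of cylinders whose separate shears also remain in $\cM$) forces the image of $L$ in $U$ to be isotropic, hence of complex dimension at most $k$.

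\textbf{Existence.} By Smillie-Weiss (Theorem~\ref{T:SW}), the $u_t$-orbit closure of every point of $\cM$ contains a horizontally periodic surface, so some $M_0 \in \cM$ is horizontally periodic. Let $r$ be the maximum, over horizontally periodic $M \in \cM$, of $\dim_\bC \operatorname{span}_\bC\{\gamma_i^*\}$; by the upper bound, $r \leq k$. Suppose for contradiction that $r < k$ and let $M$ realize this maximum. Then the $\gamma_i^*$ do not span all of $U^*$, so some $v \in T^M(\cM)$ maps into $U$ outside the current span. I would apply Smillie-Weiss to the horocycle orbit closure of a suitable perturbation of $M$ in the direction $v$, and use the Cylinder Deformation Theorem to identify, in the resulting horizontally periodic limit $M' \in \cM$, a horizontal core curve whose functional lies outside the current span, contradicting maximality. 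Hence $r = k$. The main obstacle is this saturation step: the upper bound reduces to linear algebra once the symplectic structure on $U$ and the CDT-provided shear vector are in hand, but arranging the perturbation so that the horocycle limit truly contributes a new independent core curve---rather than mere rescalings of existing ones---requires a careful use of CDT to find a transverse direction $v$ and a detailed analysis of the limiting periodic decomposition.
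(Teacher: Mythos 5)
The proposal correctly identifies the ingredients (Avila--Eskin--M\"oller symplecticity for the upper bound, Smillie--Weiss and the Cylinder Deformation Theorem for the existence/saturation), but the central mechanism is missing and you yourself flag this as ``the main obstacle.'' Concretely, what you need but never produce is the comparison between the \emph{twist space} $\Tw(M,\cM)$ (classes in $T^M_\bR(\cM)$ vanishing on all horizontal saddle connections) and the \emph{cylinder preserving space} $\CP(M,\cM)$ (classes vanishing on the core curves). The paper's key step (Lemma~\ref{L:key}) is: if $\Tw\subsetneq\CP$, take $\eta\in\CP\setminus\Tw$ and deform $M$ by $i\eta$; the horizontal cylinders persist (their core-curve holonomies stay real) but some horizontal saddle connection is tilted, so the cylinders no longer fill the surface, and Smillie--Weiss then yields a horizontally periodic surface with strictly more horizontal cylinders. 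The dimension count that triggers this is exactly where the symplectic structure enters: $p(\Tw(M,\cM))$ is isotropic (because the $\eta_{C_i}$ map to Poincar\'e duals of disjoint core curves), so $\Tw$ has codimension at least $k$, while $\CP$ has codimension exactly $d=\dim\span(\gamma_i^*)$. Thus $d<k$ forces $\Tw\subsetneq\CP$. Your proposal instead speaks vaguely of ``a suitable perturbation of $M$ in the direction $v$'' for some $v$ ``outside the current span,'' without specifying that $v$ must preserve the core curves (be in $\CP$) while failing to preserve all horizontal saddle connections (not be in $\Tw$), and without the multiplication by $i$ that actually tilts saddle connections while keeping core curves horizontal. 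Also note the paper maximizes the \emph{number} of parallel cylinders over $\cM$, not $\dim\span(\gamma_i^*)$, which is the quantity Lemma~\ref{L:key} directly increases.

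Your upper-bound paragraph is closer but still glosses over the decisive computation. Disjointness of the $\gamma_i$ gives an isotropic subspace $L=\span(PD(\gamma_i))$ of $H^1(M,\bC)$, but the span of the functionals $\gamma_i^*|_U$ in $U^*$ corresponds under the symplectic duality $U\cong U^*$ to the \emph{projection} $\pi(L)$ onto $U$ along $U^\perp$, and the projection of an isotropic subspace of the ambient space onto a symplectic subspace is \emph{not} isotropic in general. What rescues the argument is precisely the CDT applied per $\cM$-parallel equivalence class: each $\xi_j=p(\eta_{\cC_j})=\sum_{i\in\cC_j}h_i\,PD(\gamma_i)$ lies in $U\cap L$ (hence in an isotropic subspace of $U$), and because circumferences of $\cM$-parallel cylinders vary proportionally, $\xi_j^*=\Omega(\xi_j,\cdot)|_U$ is a nonzero multiple of any $\gamma_i^*|_U$ with $i\in\cC_j$. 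Hence $\span(\gamma_i^*|_U)=\span(\xi_j^*)$ corresponds to $\span(\xi_j)\subset U\cap L$, which is isotropic and so of dimension $\le k$. You gesture at ``applying the CDT to subcollections'' but do not carry this out, and without it the assertion that the image of $L$ in $U$ is isotropic is unsubstantiated.
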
 

The implied statement that $\dim_\bC p(T(\cM))$ is even is given by Avila-Eskin-M\"oller's result that $p(T(\cM))$ is symplectic \cite{AEM}. This is a key tool in the proof of Theorem \ref{T:manyC}. 

Theorem \ref{T:manyC} motivates 

\begin{defn}
The \emph{cylinder rank} of an affine invariant submanifold $\cM$ is $\frac12 \dim_\bC p(T(\cM))$. 
\end{defn}

We believe cylinder rank may be the most important numerical invariant of an affine invariant submanifold. Theorem \ref{T:CP} shows that complete periodicity prevails in rank 1 but not higher rank. Mirzakhani has conjectured that higher rank affine invariant submanifolds are arithmetic, i.e., have field of definition equal to $\bQ$. Theorems \ref{T:kM1} and \ref{T:manyC} were partially motivated by this conjecture. 

\bold{Background.} For an introduction to translation surfaces, see the surveys \cite{MT, Z}. 

There are many examples of affine invariant submanifolds which arise from ramified covering constructions. Examples not coming from covering constructions however are rare. Besides the examples mentioned above (following Theorem \ref{T:CPconv}), the only additional currently known examples are closed orbits: the Veech-Ward-Bouw-M\"oller curves \cite{V, W, BM, W2}, as well as two sporadic examples due to Vorobets and Kenyon-Smillie \cite{HS, KS} (see \cite{W2} for a more detailed summary). 

\bold{Organization.} Section \ref{S:periods} gives some definitions and describes the effect of cylinder shear in period coordinates. Section \ref{S:orbitclosure} describes the orbit closure of a cylinder shear. The key tool in the proof of Theorem \ref{T:main} is introduced in Section \ref{S:RD}, and the proof is completed in Section \ref{S:proof}. Sections \ref{S:CP}, \ref{S:field}, \ref{S:rank} and \ref{S:conv} prove Theorems \ref{T:CP}, \ref{T:kM1}, \ref{T:manyC}, and \ref{T:CPconv} respectively. Two open problems related to our work are listed in Section \ref{S:open}.

\bold{Acknowledgements.} The author thanks his thesis advisor Alex Eskin for inspiring this project, as well as Jon Chaika, Simion Filip, David Aulicino, Barak Weiss and John Smillie for enjoyable and helpful conversations on the results of this paper. The author is very grateful to Martin M\"oller and Maryam Mirzakhani for sharing their insight into orbit closures with the author, and to Erwan Lanneau for very helpful communications about complete periodicity in low genus. The author thanks Curtis McMullen for helpful comments on an earlier draft.

\section{Period coordinates}\label{S:periods}
Suppose $g\geq 1$ and let $\alpha$ be a partition of $2g-2$. The stratum $\cH(\alpha)$ is defined to be the set of $(X,\omega)$ where $X$ is a genus $g$ closed Riemann surface, and $\omega$ is a holomorphic 1-form on $X$ whose zeroes have multiplicities given by $\alpha$. For technical reasons, we prefer to work with a finite cover $\cH$ of $\cH(\alpha)$ which is a manifold instead of an orbifold.

Given a translation surface $(X,\omega)$, let $\Sigma\subset X$ denote the set of zeros of $\omega$. (We will also refer to $\Sigma$ as the set of singularities of $(X,\omega)$, since it is at these points where the flat metric is singular.) We consider the relative cohomology group $H_1(X,\Sigma; \bZ)$, and also the bundle $H^1_{rel}$ of relative cohomology, whose fiber over $(X,\omega)$ is  $H_1(X,\Sigma; \bC)$. For any neighborhood $\cU\subset \cH$ over which the bundle $H^1_{rel}$ is trivializable, we define the local period coordinate $\Phi:\cU\to H_1(X,\Sigma; \bC) \simeq \bC^m $ by 
$$\Phi(X,\omega)=[\omega].$$
Here $[\omega]$ is the relative cohomology class determined by $\omega$, and $m$ is the dimension of relative cohomology. 

\begin{defn} An \emph{affine invariant submanifold} of $\cH$ is a closed connected subset $\cM\subset \cH$ for which every point in $\cM$ has a neighborhood $\cU$ as above, satisfying $\Phi(\cU\cap \cM) = \Phi(\cU)\cap (V\otimes \bC)$, where $V$ is a subspace of $H^1(X,\Sigma;\bR)$. 
\end{defn}

More concretely, pick a basis $\gamma_1, \ldots, \gamma_m$ for $H_1(X,\Sigma; \bZ)$. The local period coordinates can be more explicitly written using the isomorphism $H_1(X,\Sigma; \bC) \simeq \bC^m$ defined by this choice of basis: 
$$\Phi(X,\omega)=\left( \int_{\gamma_i} \omega\right)_{i=1}^m.$$

An affine invariant submanifold is a submanifold of a stratum defined locally by homogenous linear equations with real coefficients on the period coordinates $\int_{\gamma_i} \omega$.

\begin{thm}[Eskin-Mirzakhani-Mohammadi, Thm. 2.1 in \cite{EMM}]
The $GL(2,\bR)$--orbit closure of any translation surface is an affine invariant submanifold. 
\end{thm}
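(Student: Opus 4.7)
The plan is to derive this orbit-closure statement from the measure-classification theorem of Eskin--Mirzakhani for $P$-invariant ergodic measures on strata, where $P$ denotes the upper-triangular subgroup of $GL(2,\bR)$. That classification says that every such measure is \emph{affine}: in local period coordinates it is a normalization of Lebesgue measure on a complex subspace cut out by real-linear equations, and its support is an affine invariant submanifold in the sense already defined. Granting this, the task becomes to produce a $P$-invariant probability measure whose support is exactly $\overline{GL(2,\bR)\cdot M}$.

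I would construct such a measure by averaging. Form the Cesaro averages
\[
\mu_T = \frac{1}{T}\int_0^T (a_s)_\ast \nu_M \, ds,
\]
where $\nu_M$ is normalized Lebesgue measure on a small horocycle arc through $M$ and $a_s$ is the geodesic flow. The first substantive step is \textbf{non-escape of mass}: one must show the family $\{\mu_T\}$ is tight on $\cH$. This uses a quantitative recurrence estimate, based on Siegel--Veech-type bounds, that the proportion of time the orbit spends deep in the thin part of the stratum is uniformly small. Tightness yields a weak-$\ast$ subsequential limit $\mu$, which by construction is a $P$-invariant probability measure supported on $\overline{GL(2,\bR)\cdot M}$.

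Next I would decompose $\mu$ into $P$-ergodic components and invoke the Eskin--Mirzakhani classification, so each component is affine and supported on an affine invariant submanifold. The remaining work is to show (i) that $M$ itself lies in the support and (ii) that the relevant ergodic component is supported on \emph{all} of $\overline{GL(2,\bR)\cdot M}$. These steps use an induction on dimension in the spirit of Ratner: if some smaller affine invariant $\cN'\subsetneq\cN$ carried all the mass of an ergodic component, then one produces nearby transverse displacements whose forward $P$-trajectories equidistribute and therefore force mass to leak out of $\cN'$, contradicting its maximality. In particular, applied to the smallest affine invariant submanifold containing $M$, this identifies the orbit closure with that submanifold.

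The main obstacle is not the formal Ratner-style induction but the quantitative drift estimates that power it. In contrast to classical unipotent dynamics on homogeneous spaces, there is no polynomial divergence for the $P$-action on strata, so the usual shearing argument in the transverse direction is unavailable. Replacing it requires the exponential-drift and entropy machinery developed by Eskin--Mirzakhani, which in turn rests on nonuniform hyperbolicity of the Kontsevich--Zorich cocycle (Forni, Avila--Viana) and a delicate bookkeeping of how affine constraints can degenerate along one-parameter families of nearby translation surfaces. A secondary but still substantial obstacle is the tightness step, which depends on strata-specific flat-geometry estimates rather than any homogeneous-dynamics input.
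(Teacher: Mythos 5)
This statement is quoted in the paper as Theorem 2.1 of Eskin--Mirzakhani--Mohammadi \cite{EMM} and is used purely as a black-box input; the paper supplies no proof of it, so there is no in-paper argument for your sketch to be measured against. What you have written is nevertheless a fair compressed account of the actual two-paper proof in the literature: the Eskin--Mirzakhani classification of $P$-invariant ergodic measures as affine measures feeds into the Cesaro-averaging and non-escape-of-mass argument of \cite{EMM}, and you correctly identify the lack of polynomial divergence for the $P$-action, the exponential-drift and entropy machinery, and the nonuniform hyperbolicity of the Kontsevich--Zorich cocycle (Forni, Avila--Viana) as the central engine and the main technical obstacles. One caveat on attribution: the Ratner-style induction and transverse-drift ideas in your third paragraph belong to the measure-classification paper of Eskin--Mirzakhani, whereas the distinct contributions of \cite{EMM} are the tightness/non-escape-of-mass estimates, the averaging scheme, and the passage from measure classification to equidistribution and hence to classification of orbit closures. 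Since the present paper proves nothing here and simply cites the result, these are remarks about the literature rather than a discrepancy with the paper's reasoning.
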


In light of this, to establish Theorem \ref{T:main} it suffices to show that if $M=(X,\omega)$ is contained in an affine invariant submanifold $\cM$, then the cylinder shear and stretch remains in $\cM$, and this is what we will do. It will be necessary to understand the effect of cylinder shear and stretch in period coordinates. 

Given a collection $\cC$ of horizontal cylinders on a translation surface $M$, we define $\eta_\cC \in H^1(M,\Sigma; \bC)$ to be the derivative of the cylinder shear $u_t^\cC(M)$ is local period coordinates. That is, 
$$ \eta_\cC = \left.\frac{d}{dt}\right|_{t=0} \Phi(u_t^\cC(M)),$$
where $\Phi$ is a local period coordinate on a neighborhood of $M$. It is clear from the definition that $\eta_\cC$ is zero on any relative cycle not intersecting any of the cylinders in $\cC$, as well as on the core curves of the cylinders of $\cC$, but that $\eta_\cC$ is equal to the height of the cylinder on any relative cycle joining a zero on the bottom edge of a cylinder in $\cC$ to a zero on the top edge of this cylinder. Relative cycles of these three types span $H_1(X,\Sigma; \bZ)$, so this is a complete description of $\eta_\cC$. 

\begin{lem}
The deformation $u_t^\cC(M)$ is linear in local period coordinates:
$$\Phi(u_t^\cC(M)) = \Phi(M) + t \eta_\cC$$
for $t$ small enough. The cylinder stretch in local period coordinates is: 
$$\Phi(a_t^\cC(M)) = \Phi(M) + (e^t-1) i \eta_\cC.$$
\end{lem}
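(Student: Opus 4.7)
The plan is to verify both formulas directly by computing the periods of the deformed $1$-form on a spanning set for $H_1(M,\Sigma;\bZ)$. The paragraph preceding the lemma supplies such a spanning set consisting of three types of cycles: (a) cycles disjoint from the cylinders of $\cC$; (b) core curves of cylinders in $\cC$; and (c) crossing arcs running from a zero on the bottom edge of a cylinder in $\cC$ to a zero on its top edge. Types (a) and (b) are essentially immediate. For type (a), the deformations $u_t^\cC$ and $a_t^\cC$ act as the identity on a neighborhood of the cycle, so the period is unchanged, matching the fact that $\eta_\cC$ vanishes on such cycles. For type (b), I take the core curve to be horizontal; since $u_t$ and $a_t$ each fix every horizontal vector, the length and direction of the curve are unchanged, so the period (the circumference of the cylinder) is unchanged, again matching $\eta_\cC=0$.

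The only real computation is on a type (c) arc $\gamma$ crossing a cylinder of height $h$, whose original holonomy I write as $\int_\gamma\omega = v_1 + ih$ with $v_1\in\bR$. As explained in the introduction, $u_t^\cC(M)$ and $a_t^\cC(M)$ are obtained by applying the linear maps $u_t$ and $a_t$ to the rectangle representing the cylinder and regluing, so the new holonomies are obtained by applying these linear maps to the displacement vector $(v_1, h)$, yielding
$$\int_\gamma u_t^\cC(\omega) = (v_1 + th) + i h, \qquad \int_\gamma a_t^\cC(\omega) = v_1 + i e^t h.$$
Subtracting the original holonomy leaves increments of $th$ and $i(e^t-1)h$, respectively. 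Since $\eta_\cC(\gamma)=h$ by definition, both formulas in the lemma hold on cycles of type (c).

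The only subtlety is the tacit identification of $H_1(M,\Sigma;\bZ)$ with the corresponding group on the deformed surface, which is needed for the period maps to be compared at all. This identification is furnished by the natural isotopy of the topological surface associated with the deformation: outside $\cC$ it is the identity, while inside each cylinder of $\cC$ it interpolates by $u_s$ (resp.\ $a_s$) for $s\in[0,t]$; the two pieces are compatible with the regluing because $u_s$ and $a_s$ preserve the horizontal directions along which cylinder boundaries are identified to the rest of the surface. For $|t|$ small enough the deformed surface lies in a single period coordinate chart around $M$, which is all that is required. I anticipate no serious obstacle here; once the linear actions of $u_t$ and $a_t$ on the cylinder rectangles are made explicit, the lemma reduces to routine bookkeeping on the three types of generators.
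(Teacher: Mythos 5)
Your proof is correct and follows essentially the same route as the paper's: compute periods on the three-type spanning set for $H_1(M,\Sigma;\bZ)$, observe that types (a) and (b) have constant periods because $u_t$ and $a_t$ fix horizontal vectors and act as the identity off $\cC$, and do the explicit linear calculation on crossing arcs. The extra care you take about the identification of homology groups (the marking) is a point the paper defers to the standard remarks at the start of Section~\ref{S:RD}, but your sketch of the compatible family of homeomorphisms is the right picture.
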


\begin{proof}
First set $(X_t, \omega_t)= u_t^\cC(M).$ It suffices to show that 
$$\int_\gamma \omega_t = \int_\gamma \omega + t \eta_\cC(\gamma)$$
for any set of $\gamma$ which spans $H_1(M, \Sigma; \bZ)$. However, this is clear for the three types of relative cycles described above. For $\gamma$ supported off the cylinders of $\cC$ or a core curve of a cylinder in $\cC$, it is clear that $\int_\gamma \omega_t$ is constant. For $\gamma$ a relative cycle joining a zero on the bottom edge of a cylinder in $\cC$ to a zero on the top edge of this cylinder, the imaginary part of $\int_\gamma \omega_t$ remains constant and the real part increases linearly in proportion to the height of the cylinder. 

The proof is extremely similar for the cylinder stretch $a_t^\cC(M)$.
\end{proof}

Note that our discussion also shows the following, which will be crucial in the sequel.

\begin{lem}\label{L:i}
Suppose that $\cC$ consists of cylinders $C_1, \ldots, C_r$, with core curves $\alpha_1, \ldots, \alpha_r$ and heights $h_1, \ldots, h_r\in \bR$. Let $I_{\alpha_i}\in H^1(M, \Sigma, \bZ)$ be the cohomology class defined to be zero on all relative homology classes which can be realized disjointly from the interior of $C_i$, and defined to be one on a relative cycle joining a zero on the bottom edge of $C_i$ to a zero on the top edge of this cylinder.
Then 
$$\eta_\cC = \sum_{i=1}^r h_i I_{\alpha_i},$$
\end{lem}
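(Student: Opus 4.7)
The plan is to verify the identity on each of three types of relative cycles that together span $H_1(M,\Sigma;\bZ)$, exactly the types already isolated in the discussion preceding the previous lemma: (a) relative cycles supported off the union $C_1\cup\cdots\cup C_r$; (b) the core curves $\alpha_1,\ldots,\alpha_r$ themselves; and (c) relative cycles $\gamma$ that cross some $C_j$ from a zero on its bottom edge to a zero on its top edge. Since both sides of the claimed equality are $\bC$-linear functionals on $H_1(M,\Sigma;\bZ)\otimes\bC$, checking equality on a spanning set will finish the proof.

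For a cycle of type (a), every $I_{\alpha_i}$ vanishes by definition, since such a cycle is realized disjointly from the interior of each $C_i$; and $\eta_\cC$ also vanishes by the explicit description given just before the previous lemma. For type (b), I would observe that each core curve $\alpha_j$ is freely homotopic to a curve lying on the boundary of $C_j$ (the cylinder retracts onto either boundary component), so $\alpha_j$ is homologous to a cycle disjoint from the interior of $C_i$ for every $i$, including $i=j$; hence $I_{\alpha_i}(\alpha_j)=0$ for all $i,j$, and $\eta_\cC(\alpha_j)=0$ as well.

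For type (c), let $\gamma$ be a crosscut of $C_j$. Then $I_{\alpha_j}(\gamma)=1$ by definition, while for $i\neq j$ the cycle $\gamma$ can be pushed off the interior of $C_i$ (the $C_i$ are disjoint), so $I_{\alpha_i}(\gamma)=0$. Thus the right-hand side evaluates to $h_j$, matching the value of $\eta_\cC(\gamma)$ recorded in the previous lemma. Since the two cohomology classes agree on a spanning set, they are equal.

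The only step that requires any care is the evaluation on core curves in type (b); the apparent worry is that $\alpha_i$ sits inside the interior of $C_i$, but the retraction of the cylinder onto its boundary shows that $\alpha_i$ is homologous (as a relative cycle) to a cycle disjoint from the interior of $C_i$, so $I_{\alpha_i}(\alpha_i)=0$ after all. With that minor point resolved, the lemma is essentially a repackaging of the description of $\eta_\cC$ already established.
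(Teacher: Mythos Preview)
Your proof is correct and follows exactly the approach of the paper, which simply says ``Again this is verified on the spanning set for relative homology we have described.'' You have merely unpacked that sentence into its three cases and handled the one mildly subtle point (pushing $\alpha_i$ to the boundary of $C_i$) explicitly.
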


$I_{\alpha_i}$ should be thought of as ``intersection number with $\alpha_i$."

\begin{proof} 
Again this is verified on the spanning set for relative homology we have described. 
\end{proof}

\section{The orbit closure of a cylinder shear}\label{S:orbitclosure}

Part of the intuition for Theorem \ref{T:main} comes from the following. Given two sets of real numbers $S, S'\subset \bR$, we say they are \emph{independent} (over $\bQ$) if $\span_\bQ S \cap \span_\bQ S' = \{0\}$. 

\begin{lem}\label{L:hclosure}
Let $M$ be a horizontally periodic translation surface, and let $\cC_1$ be a collection of horizontal cylinders on $M$ such that the set of moduli of cylinders in $\cC_1$ is independent  from the set of moduli of the remaining horizontal cylinders on $M$. Then for all $t$, the cylinder shear $u_t^{\cC_1} (M)$ remains in the $GL(2,\bR)$--orbit closure of $M$.
\end{lem}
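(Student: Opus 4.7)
My plan is to realize the desired $u_t^{\cC_1}(M)$ as an $\cH$-limit of ordinary horocycle translates $u_{T_n}(M)$, which automatically lie in the $GL(2,\bR)$-orbit of $M$. Write $\cC_0$ for the horizontal cylinders outside $\cC_1$, with moduli $\mu_i$ (for $i\in\cC_1$) and $\nu_j$ (for $j\in\cC_0$). The first ingredient is that because $M$ is horizontally periodic, the global horocycle flow coincides with the simultaneous shear on every horizontal cylinder: $u_T(M)=u_T^{\cC_1\cup\cC_0}(M)$. Moreover, the Dehn twist around a cylinder of modulus $m$ is an affine automorphism of the translation surface whose derivative on that cylinder is precisely $u_{1/m}$. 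Composing with such Dehn twists (one per cylinder) yields isomorphisms
$$u_T(M)\;\cong\;u_{T_1}^{C_1}\!\cdots u_{T_N}^{C_N}(M)\quad \text{in }\cH$$
for any shears $T_k$ with $T_k\equiv T\pmod{1/m_k}$, where the product runs over all horizontal cylinders and $m_k$ is the modulus of $C_k$.

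The second ingredient is a Kronecker/Pontryagin duality argument that selects such shears converging to the target. Mapping $T\mapsto(\mu_i T,\nu_j T)$ into the torus $\bT=(\bR/\bZ)^{|\cC|}$, the closure of this one-parameter subgroup is cut out by the annihilator, consisting of characters $(a_i,b_j)\in\bZ^{|\cC|}$ with $\sum a_i\mu_i+\sum b_j\nu_j=0$. For any such relation, the independence hypothesis forces
$$\sum_i a_i\mu_i \;=\; -\sum_j b_j\nu_j \;\in\; \span_\bQ\{\mu_i\}\cap\span_\bQ\{\nu_j\}\;=\;\{0\},$$
so the two sums vanish individually. Consequently every such character evaluates the candidate point $((\mu_i t)_i,(0)_j)\in\bT$ to zero, placing this point in the closure. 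I can therefore find $T_n\in\bR$ together with integers $k_n^i,\ell_n^j$ such that $T_n-k_n^i/\mu_i\to t$ for each $i\in\cC_1$ and $T_n-\ell_n^j/\nu_j\to 0$ for each $j\in\cC_0$.

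Combining the two ingredients, for each $n$ the surface $u_{T_n}(M)$ is isomorphic in $\cH$ to a simultaneous cylinder shear whose parameters approach $t$ on $\cC_1$ and $0$ on $\cC_0$. Since the cylinder shear is linear, and in particular continuous, in period coordinates by Lemma \ref{L:i}, these surfaces converge to $u_t^{\cC_1}(M)$ in $\cH$; since each $u_{T_n}(M)$ is in the $GL(2,\bR)$-orbit of $M$, the limit lies in its orbit closure. I expect the conceptual crux to be the reduction via Dehn twists to an equidistribution problem on a torus; once that framing is in place, the $\bQ$-independence hypothesis enters in precisely the form needed to make the Kronecker calculation succeed, and the remainder is a routine continuity argument.
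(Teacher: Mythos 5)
Your proposal is correct and follows essentially the same route as the paper: identify $u_T$ with the simultaneous shear on all horizontal cylinders, reduce modulo Dehn twists to a linear flow on a torus, use the $\bQ$-independence of moduli to show the target point lies in the torus orbit closure, and conclude by continuity of the cylinder shear in period coordinates. The only cosmetic differences are that you phrase the torus orbit closure via characters and the annihilator where the paper speaks directly of rational linear relations among moduli (the paper's Corollary~\ref{C:closure}), and your phrase ``affine automorphism'' for the single-cylinder Dehn twist is a slight abuse (its derivative is not globally constant), though the intended and correct content --- that the Dehn twist is a mapping class taking $u_{1/m}^{C}(M)$ back to $M$ --- is clear.
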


The proof is standard, but is included here for completeness. 

Given a collection of cylinders $\cC=\{C_1, \ldots, C_r\}$, we can also define the cylinder deformation $u_{t_1, \ldots, t_r}^\cC(M)$ which applies $u_{t_i}$ to $C_i$ for $i=1,\ldots, r$ and does nothing to the rest of the translation surface. Suppose that the height of $C_i$ is $h_i$, and the circumference is $c_i$. Then

\begin{lem}
$u_t^\cC(M)= u_{t_1, \ldots, t_r}^\cC(M)$, where $t_i= t \mod c_i/h_i$. 
\end{lem}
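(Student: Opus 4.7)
The statement is essentially a periodicity observation, so the plan is to reduce the lemma to a single-cylinder statement and then verify periodicity directly from the cut-and-paste definition of the deformation.

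First I would note that since the cylinders $C_1, \ldots, C_r$ in $\cC$ are pairwise disjoint, the single-cylinder shears $u_t^{\{C_i\}}$ act on disjoint subsurfaces of $M$ and so pairwise commute. From the polygonal description of the deformation,
$$u_t^\cC(M) = u_t^{\{C_1\}} \circ \cdots \circ u_t^{\{C_r\}}(M), \qquad u_{t_1,\ldots,t_r}^\cC(M) = u_{t_1}^{\{C_1\}} \circ \cdots \circ u_{t_r}^{\{C_r\}}(M),$$
so it suffices to prove, for each $i$, that $u_t^{\{C_i\}}(M) = u_{t_i}^{\{C_i\}}(M)$ whenever $t \equiv t_i \pmod{c_i/h_i}$.

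Next I would establish this one-cylinder periodicity. Model $C_i$ as the rectangle $[0,c_i]\times[0,h_i]$ with the identification $(0,y)\sim(c_i,y)$. Applying $u_t$ to this rectangle shears its top edge by $t h_i$ relative to its bottom edge, and regluing the sheared rectangle back into $M\setminus C_i$ (which is unchanged) amounts to reattaching the top edge via the original gluing precomposed with a horizontal translation by $t h_i$. Because the top edge of $C_i$ is a circle of circumference $c_i$ and the attaching map to $M\setminus C_i$ is equivariant under translation by $c_i$, this reattachment depends only on $t h_i \pmod {c_i}$, equivalently on $t \pmod {c_i/h_i}$. Hence $u_t^{\{C_i\}}(M)$ is literally the same translation surface as $u_{t_i}^{\{C_i\}}(M)$ for $t_i = t \mod c_i/h_i$, and combining the $r$ factors completes the proof.

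\textbf{Main obstacle.} There is no genuine obstacle: the content of the lemma is the observation that a single horocycle flow acts periodically on an isolated cylinder, with period equal to the inverse modulus. The only point requiring care is to confirm that the equality of the two deformations holds on the nose (not merely up to isotopy or marking), which follows from the fact that the cut-and-paste construction produces a canonical translation structure once the shear amount is specified modulo $c_i$.
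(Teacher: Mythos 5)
Your proof is correct and follows essentially the same path as the paper. The paper phrases the key observation as: applying $u_{c_i/h_i}$ to a cylinder of circumference $c_i$ and height $h_i$ is a full Dehn twist and hence returns the original translation surface; your cut-and-paste computation (top edge shifted by $th_i$, only relevant modulo the circumference $c_i$) is exactly that observation made explicit. Your preliminary reduction — that the single-cylinder shears on the disjoint cylinders commute, so $u^\cC_t$ factors as a composition of $u^{\{C_i\}}_t$'s — is implicit in the paper's definition of $u^\cC_{t_1,\ldots,t_r}$ and is fine to spell out. One small wording nit: "the attaching map ... is equivariant under translation by $c_i$" is slightly odd, since translation by $c_i$ is the identity on the boundary circle $\bR/c_i\bZ$; what you mean (and what makes the argument go) is simply that shifting the gluing by a full circumference reproduces the original gluing, yielding equality in the stratum via the map that is the identity off $C_i$ and a Dehn twist on $C_i$.
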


Here we use the notation $x\bmod y$ to denote the unique number $x'\in [0,y)$ which differs from $x$ by an integral multiple of $y$. The proof is left to the reader, and consists only of the observation that applying $u_{c_i/h_i}$ to a cylinder of circumference $c_i$ and height $h_i$ is equivalent to applying an element of the mapping class group: it is a full Dehn twist, and returns the original translation surface. 

\begin{lem}
Consider the flow $f_t(v)=v+t(1, \ldots, 1)$ on the torus 
$$\bT^r=[0, c_1/h_1)\times \cdots\times [0,c_r/h_r).$$ 
The orbit closure of $v$ is the set of all $v+(t_1/m_1,\ldots, t_r/m_r)$, where the $t_i$ satisfy all homogeneous linear relations with rational coefficients that are satisfied by the $m_i=h_i/c_i$. 
\end{lem}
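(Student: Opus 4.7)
The plan is to invoke the classical Kronecker--Weyl theorem after a change of coordinates. First I rescale the $i$-th coordinate of $\bT^r$ by $m_i$, which identifies $\bT^r$ with the standard torus $\bR^r/\bZ^r$ and transforms the flow $f_t(v) = v + t(1,\ldots,1)$ into translation by $t\mathbf{m}$, where $\mathbf{m} = (m_1,\ldots,m_r)$. It therefore suffices to describe the closure in $\bR^r/\bZ^r$ of the one-parameter subgroup $\{t\mathbf{m} : t \in \bR\}$, and then translate by $v$.

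Next I apply Pontryagin duality. The orbit closure is a closed connected subgroup $H \subset \bR^r/\bZ^r$, hence a sub-torus, and it is determined by the subgroup of characters annihilating it. A character $\chi_a(w) = e^{2\pi i \langle a, w\rangle}$ with $a \in \bZ^r$ is trivial on $\{t\mathbf{m} : t \in \bR\}$ if and only if $\langle a, \mathbf{m}\rangle = 0$, so
\[
H = \{w \in \bR^r/\bZ^r : \langle a, w\rangle \in \bZ \text{ for every } a \in \bZ^r \text{ with } \langle a, \mathbf{m}\rangle = 0\}.
\]
Lifting to $\bR^r$, the preimage of $H$ is a closed subgroup containing $\bZ^r$; a short connectedness argument (the integer-valued function $\langle a, \cdot\rangle$ is locally constant on each connected component) shows that its identity component is the real vector subspace
\[
V = \Bigl\{(t_1,\ldots,t_r) \in \bR^r : \sum a_i t_i = 0 \text{ for all } (a_i) \in \bQ^r \text{ with } \sum a_i m_i = 0\Bigr\}.
\]
Undoing the rescaling (multiplying the $i$-th coordinate by $1/m_i$) converts $V$ into $\{(t_1/m_1,\ldots,t_r/m_r) : (t_i) \in V\}$, and yields the description of the orbit closure asserted in the lemma.

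No substantial obstacle is anticipated; the result is a standard application of Kronecker--Weyl, and only bookkeeping is required to reconcile the integral annihilator characterizing $H$ with the $\bQ$-linear relations on $\mathbf{m}$ that cut out the identity component $V$, and to undo the coordinate rescaling at the end.
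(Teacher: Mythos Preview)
Your proof is correct and follows essentially the same approach as the paper: rescale each coordinate by $m_i$ to pass to the standard torus, identify the orbit closure there as the smallest subtorus containing $(m_1,\ldots,m_r)$, and then undo the rescaling. The only difference is that the paper simply declares the description of the linear-flow orbit closure on the standard torus to be ``standard,'' whereas you spell it out via Pontryagin duality; this is extra detail rather than a different argument.
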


The condition is that whenever $\sum_{i=1}^r q_i m_i=0$ with all $q_i\in \bQ$ then $\sum_{i=1}^r q_i t_i=0$ also. (We remind the reader that all linear equations in this paper are homogeneous unless otherwise specified.) 

\begin{proof}
Let $\bT_0=[0,1)^r$ be the standard torus, and define $\Psi:\bT\to \bT_0$ by 
$$\Psi(v_i)_{i=1}^r = (v_i m_i)_{i=1}^k.$$
Define the flow $g_t(w)= w+ t(m_1, \ldots, m_r)$ on $\bT_0$. We see that $\Phi(f_t(v))=g_t(\Psi(v))$. 
It is standard that the $g_t$--orbit closure of any $w\in \bT_0$ is the smallest  subtorus of $\bT_0$ containing $(m_1, \ldots, m_r)$, translated by $w$. That is, the orbit closure is the set of all $w+(t_1,\ldots, t_r)$ where the $t_i$ satisfy all rational homogeneous linear equations that the $m_i$ do. 

Moving back to $\bT$ using $\Psi^{-1}$, we see that the orbit closure of $v\in \bT$ is the set of all $v+ (t_1/m_1,\ldots, t_r/m_r)$ as claimed. 
\end{proof}

\begin{cor}\label{C:closure}
The $u_t^\cC$--orbit closure of $M$ is equal to the set of 
$$u_{t_1/m_1, \ldots, t_r/m_r}^\cC(M),$$ 
where the $t_i$ satisfy all homogeneous linear relations with rational coefficients that the $m_i=h_i/c_i$ do. In particular, if $\cC$ is the disjoint union of two collections of cylinders $\cC_1$ and $\cC_2$ whose sets of moduli are independent, then $u_t^{\cC_j}(M)$ is in the $u_t^\cC$--orbit closure of $M$, for all $t$ and $j=1, 2$. 
\end{cor}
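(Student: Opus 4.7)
The plan is to package together the two immediately preceding lemmas. The first lemma identifies the one-parameter family $t \mapsto u_t^\cC(M)$ with the composition of the linear flow $f_t(v) = v + t(1,\ldots,1)$ on the torus $\bT^r = [0, c_1/h_1) \times \cdots \times [0, c_r/h_r)$ with the evaluation map $\Psi : \bT^r \to \cH$ sending $(v_1, \ldots, v_r)$ to $u_{v_1, \ldots, v_r}^\cC(M)$, started at the basepoint $v = 0 \in \bT^r$. Since $\Psi$ is continuous (in fact affine in the period coordinates of Section \ref{S:periods}, by the formulas $\Phi(u_s^\cC(M)) = \Phi(M) + s\eta_\cC$ applied one cylinder at a time), the $u_t^\cC$--orbit closure of $M$ equals the $\Psi$--image of the $f_t$--orbit closure of $0$.

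Next I would invoke the preceding torus lemma to describe this $f_t$--orbit closure of $0$ explicitly as the set of points of the form $(t_1/m_1, \ldots, t_r/m_r)$ where the $t_i$ range over all tuples satisfying every homogeneous rational linear relation that the moduli $m_i = h_i/c_i$ satisfy. Applying $\Psi$ then yields precisely the first assertion of the corollary.

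For the ``in particular'' clause, suppose $\cC = \cC_1 \sqcup \cC_2$ with the moduli of $\cC_1$ independent from those of $\cC_2$ over $\bQ$. To realize $u_t^{\cC_1}(M)$ as a point of the orbit closure, I would set $t_i = t\, m_i$ when $C_i \in \cC_1$ and $t_i = 0$ when $C_i \in \cC_2$; under the parametrization above this produces the surface $u_t^{\cC_1}(M)$. To verify the constraint, given any rational relation $\sum q_i m_i = 0$, I would rearrange it as
$$\sum_{C_i \in \cC_1} q_i m_i \;=\; -\sum_{C_i \in \cC_2} q_i m_i,$$
which exhibits a common element of the rational spans of the two moduli sets; by independence it is zero, so
$$\sum_i q_i t_i \;=\; t \sum_{C_i \in \cC_1} q_i m_i \;=\; 0.$$
Hence the chosen tuple satisfies all the required rational relations and the first assertion of the corollary places $u_t^{\cC_1}(M)$ in the orbit closure; the case of $\cC_2$ is symmetric. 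No step here looks like a real obstacle: the entire content has been done in the two preceding lemmas, and this corollary is only their assembly.
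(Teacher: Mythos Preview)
Your proof is correct and follows the same approach as the paper. The paper's argument is terser---it says the first statement ``follows directly from the previous two lemmas'' and for the second sets $t_i=m_i$ for $C_i\in\cC_1$, $t_i=0$ otherwise, asserting without detail that this tuple satisfies the required rational relations---whereas you spell out the continuity/compactness reasoning for passing the closure through $\Psi$ and give the explicit independence argument; but the ideas are identical.
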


\begin{proof}
The first statement follows directly from the previous two lemmas. For the second statement, assume that $\cC_1=\{C_1, \ldots, C_p\}$, and set $t_i=m_i$ for $i\leq p$ and $t_j=0$ otherwise. This choice of $t_i$ satisfies all the rational linear equations which the $m_i$ do, so by the first statement $u_t^{\cC_1}(M)=u_{tt_1/m_1, \ldots, tt_r/m_r}(M)$ is in the $u_t^\cC$--orbit closure of $M$. This proves the second statement for $j=1$, and the $j=2$ case is identical. 
\end{proof}

\begin{proof}[\eb{Proof of Lemma \ref{L:hclosure}.}]
Set $\cC$ to be the collection of all horizontal cylinders on $M$, so $u_t=u_t^\cC$. Set $\cC_2=\cC\setminus \cC_1$. Then by the previous corollary, $u_t^{\cC_1}(M)$ is in the $u_t$--orbit closure of $M$. 
\end{proof}

\section{Real deformations}\label{S:RD}

Let $M$ be a translation surface, and $\cU$ be a neighborhood of $M$ on which local period coordinates $\Phi$ are defined. 

\begin{defn}
A \emph{real deformation} of $M$ is any $M'\in \cU$ such that $\Phi(M')-\Phi(M)\in H^1(M,\Sigma;\bR)$. All real deformations can be obtained as $\Phi^{-1}(\Phi(M)+\zeta)$, where $\zeta\in H^1(M,\Sigma;\bR)$ is sufficiently small.
\end{defn}

\begin{rem}
Fix a translation surface $M$. There is a small simply connected neighborhood of $M$ in the stratum, in which every translation surface is \emph{marked} by a homeomorphism to $M$ \emph{rel} singularities which is unique up to isotopy \emph{rel} singularities. Any curve on $M$ which is either closed or joins a pair of singularities can thus be parallel transported to translation surfaces $M'$ in the neighborhood, and the result is unique up to isotopy \emph{rel} singularities. 

Now fix a finite set of saddle connections on $M$. By decreasing the size of the neighborhood of $M$, we may assume that on each $M'$ in the neighborhood the isotopy class of curve representing each saddle connection in the finite set is still represented by a saddle connection. The same discussion applies with flat regular geodesics on $M$ (i.e., core curves of cylinders).  

In this way we may make precise statements such as ``a cylinder on $M$ persists at nearby translation surfaces," and we may speak of this cylinder on translation surfaces sufficiently close to $M$. However, having now indicated the relevant standard techniques, we will omit technical details about markings in the remainder of this article. Whenever we speak of deformations of a translation surface $M$, or surfaces ``nearby" $M$, it is implicit that there exists a small neighborhood of $M$ on which the statements are true.  
\end{rem}

\begin{lem}\label{L:persist} 
The horizontal cylinders on $M$ persist under any real deformation, and maintain constant height. 
\end{lem}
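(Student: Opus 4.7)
The plan is to exploit the fact that the real and imaginary parts of a period $\int_\gamma \omega$ record the horizontal and vertical displacements along $\gamma$. By definition, a real deformation adds a real cohomology class to $\Phi(M)$, so it preserves $\Im \int_\gamma \omega$ for every relative cycle $\gamma$. In particular, a relative cycle that was horizontal on $M$ (that is, had $\Im \int_\gamma \omega = 0$) remains horizontal on the deformation $M'$.

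The first concrete step is to invoke the persistence of saddle connections outlined in the remark just before the lemma: after shrinking $\cU$ if necessary, each of the finitely many horizontal saddle connections bounding the cylinders of $M$ is represented by a nearby saddle connection on every $M' \in \cU$. For a saddle connection $\gamma$ on the boundary of a horizontal cylinder $C$, the period $\int_\gamma \omega$ lies in $\bR$, so $\Im \int_\gamma \omega = 0$; because a real deformation does not change imaginary parts of periods, the same is true on $M'$, and $\gamma$ remains horizontal there. Next, the core curve $\alpha$ of $C$ also has real period, which stays real under a real deformation, so $\alpha$ persists as a closed horizontal trajectory of $M'$.

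At this point the region of $M'$ bounded by the (still horizontal) boundary saddle connections of $C$ is an open topological annulus on which $\alpha$ is a closed horizontal geodesic; by the standard classification of components of the horizontal foliation on a translation surface, this annulus is foliated by parallel closed horizontal geodesics and is therefore again a cylinder. To verify that the height is preserved, I would fix a saddle connection $\tau$ joining a zero on the bottom edge of $C$ to a zero on the top edge. The height of $C$ on $M$ is $\Im \int_\tau \omega$; since $\tau$ persists on $M'$ and the real deformation fixes the imaginary part of its period, the height of the corresponding cylinder on $M'$ is again $\Im \int_\tau \omega$.

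The routine part of the argument is the computation with imaginary parts of periods. The main obstacle, and the only place where care is required, is the marking/persistence step: one must make sure that by choosing $\cU$ small enough, every boundary saddle connection and every core curve of every horizontal cylinder of $M$ continues to be represented by a genuine saddle connection (respectively closed geodesic) on each $M' \in \cU$, and that no new horizontal saddle connections appear inside the cylinder region that would break it into smaller pieces. As the remark preceding the lemma indicates, these are standard facts about translation surface deformations, and shrinking $\cU$ handles them uniformly for the finite collection of horizontal cylinders of $M$.
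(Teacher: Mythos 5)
Your proposal is correct and takes essentially the same approach as the paper's proof: both argue that a real deformation preserves the imaginary part of every period, so the horizontal boundary saddle connections of each cylinder remain horizontal (hence the cylinder persists), and the height, being the imaginary part of a cross-curve period, stays constant. The extra care you take with markings and the topological annulus is exactly what the paper delegates to the remark preceding the lemma.
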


Since horizontal cylinders persist, given a collection $\cC$ of horizontal cylinders on $M$, we will also speak of this collection of cylinders on any real deformation of $M$. 

\begin{proof}
Consider a path of real deformations $\Phi^{-1}(\Phi(M)+t\zeta)$, where $t$ is sufficiently small. Along this path, the imaginary part of the holonomy of every saddle connection does not change. Any cylinder on $M$ is bounded by a collection of horizontal saddle connections, and since these saddle connections remain horizontal the cylinder persists. Furthermore, since the height of a cylinder is the imaginary part of a saddle connection joining the lower and upper boundaries, this also remains constant. 
\end{proof}

Recall that $T^M(\cM)$ denotes the tangent space to $\cM$ at the point $M\in \cM$, and that $T^M(\cM)$ is naturally a subspace of $H^1(M,\Sigma; \bC)$, where $\Sigma\subset M$ is the set of singularities of $M$. 

\begin{defn}
Suppose $M\in \cM$. Two relative homology classes $\alpha,\beta\in H_1(M,\Sigma; \bZ)$ are called \emph{$\cM$--collinear}  if they have collinear images in $T^M(\cM)^*$. (Two nonzero vectors in a vector space are called collinear if they are scalar multiples.) 
\end{defn}

For example, if $\alpha$ and $2\beta$ are equal in $T^M(\cM)^*$ then $\alpha$ and $\beta$ are $\cM$--collinear, and this means exactly that for any translation surface in $\cM$ near $M$ the holonomy along $\beta$ is equal to twice the holonomy along $\alpha$. In other words, this equality must be one of the linear equations defining $\cM$ in local period coordinates at $M$.  

\begin{rem}
Relative homology classes $\alpha,\beta\in H_1(M,\Sigma; \bZ)$ also define functionals $\alpha^*, \beta^*$ on the real part of the tangent space $T^M_\bR(\cM)=T^M(\cM)\cap H^1(M, \Sigma; \bR)$. Since $T^M(\cM)$ is the complexification of $T^M_\bR(\cM)$, we see that $\alpha$ and $\beta$ are collinear in $T^M_\bR(\cM)^*$ if and only if they are collinear in $T^M(\cM)^*$. 
\end{rem}

\begin{defn}
Two saddle connections on $M\in\cM$ are \emph{$\cM$--parallel} if they are parallel at $M$ and at every nearby $M'\in \cM$. Two cylinders on $M\in\cM$ are \emph{$\cM$--parallel} if they are parallel at $M$ and at every nearby $M'\in \cM$.
\end{defn}

\begin{lem}
Two saddle connections are $\cM$--parallel if and only if their relative homology classes are $\cM$--collinear. Two cylinders are $\cM$--parallel if and only if their core curves are  $\cM$--collinear.
\end{lem}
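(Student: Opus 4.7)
The plan is to translate the flat-geometric parallelism into a statement about ratios of holomorphic period functions on $\cM$, and then invoke the rigidity principle that a holomorphic function on a connected complex manifold taking only real values must be locally constant. Both claims of the lemma will follow from the same argument, once one observes that the holonomy of a saddle connection or of the core curve of a cylinder is never zero (these are represented by straight segments or closed geodesics of positive length in the flat metric).

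$(\Leftarrow)$ Suppose $\alpha^* = \lambda \beta^*$ in $T^M(\cM)^*$; by the preceding Remark we may take $\lambda \in \bR$. Locally at $M$ the submanifold $\cM$ is cut out by $\Phi(\cU \cap \cM) = \Phi(\cU) \cap (V \otimes \bC)$ with $T^M(\cM) = V \otimes \bC$, so $\Phi(M') \in T^M(\cM)$ for every $M' \in \cM$ sufficiently close to $M$. Applying the identity $\alpha^* = \lambda \beta^*$ to the vector $\Phi(M')$ yields $\int_\alpha \omega' = \lambda \int_\beta \omega'$, so the holonomies of $\alpha$ and $\beta$ at every such $M'$ are real scalar multiples of each other, which is precisely the statement that $\alpha, \beta$ are $\cM$--parallel.

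$(\Rightarrow)$ Conversely, suppose $\alpha, \beta$ are $\cM$--parallel. The periods $z_\alpha(M') := \int_\alpha \omega'$ and $z_\beta(M') := \int_\beta \omega'$ are holomorphic functions on a small neighborhood $\cV$ of $M$ in $\cM$. Since $z_\beta(M) \neq 0$, after shrinking $\cV$ we may assume $z_\beta$ is nonvanishing on $\cV$, so $f := z_\alpha / z_\beta$ is a holomorphic function on $\cV$. The $\cM$--parallel hypothesis says exactly that $f(\cV) \subset \bR$. By the open mapping theorem, a nonconstant holomorphic function on a connected complex manifold has open image, but $\bR \subset \bC$ has empty interior; so $f$ must be constant on the connected neighborhood $\cV$, i.e.\ $f \equiv \lambda$ for some $\lambda \in \bR$. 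The identity $z_\alpha = \lambda z_\beta$ on $\cV$ translates, by linearity of periods in period coordinates, into the equality $\alpha^* = \lambda \beta^*$ in $T^M(\cM)^*$, which is $\cM$--collinearity.

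The main place care is needed is the cylinder case of $(\Rightarrow)$, where one must invoke the standard markings discussion of Section \ref{S:RD} to ensure the two cylinders on $M$ persist as cylinders on every sufficiently nearby $M' \in \cM$, so that the parallelism condition is a bona fide open condition on $\cV$ to which the holomorphic rigidity argument applies. Once this persistence is granted, the one-line principle ``holomorphic $+$ real-valued $=$ constant'' is the only substantive input in the proof.
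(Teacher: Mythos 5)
Your proof is correct and fills in the details behind the paper's one-sentence justification, which simply observes that the claim ``follows directly from the fact that $\cM$ can be defined in local period coordinates by linear equations with real coefficients.'' The $(\Leftarrow)$ direction is identical in spirit. For $(\Rightarrow)$, the paper's phrasing suggests a purely algebraic argument: since $T^M(\cM) = V \otimes \bC$ with $V$ real and $\alpha^*, \beta^*$ take real values on $V$, one expands the condition $\Re\bigl(z_\alpha(M')\bigr)\Im\bigl(z_\beta(M')\bigr) - \Im\bigl(z_\alpha(M')\bigr)\Re\bigl(z_\beta(M')\bigr) = 0$ as a polynomial in the real and imaginary parts of the perturbation in $V$; vanishing of the degree-two piece gives $\alpha^*(v)\beta^*(w) = \alpha^*(w)\beta^*(v)$ for all $v, w \in V$, which is precisely collinearity of $\alpha^*|_V$ and $\beta^*|_V$, and then the preceding Remark lifts this to $T^M(\cM)^*$. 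Your route instead packages this into the holomorphic rigidity principle: $\cM$ is locally an open subset of $V \otimes \bC$, so $f = z_\alpha/z_\beta$ is holomorphic, and a real-valued holomorphic function on a connected complex manifold is constant; then the vanishing of the linear functional $\alpha^* - \lambda\beta^*$ on an open subset of $V \otimes \bC$ forces it to vanish identically. This is a clean, conceptually transparent alternative that replaces a bilinear-identity manipulation with a one-line analytic principle, at the cost of invoking the open mapping theorem; both arguments are correct and use the same underlying structure (complex linear period coordinates with real defining equations). Your attention to the persistence of cylinders via the markings remark is also appropriate and necessary.
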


\begin{proof}
This follows directly from the fact that $\cM$ can be defined in local period coordinates by linear equations with \emph{real} coefficients. 
\end{proof}

Being $\cM$--collinear or $\cM$--parallel is an equivalence relation. 

\begin{lem}\label{L:key1}
Suppose $M\in \cM$ has horizontal cylinders $C_1, \ldots, C_n$ with moduli $m_1, \ldots, m_n$. ($M$ is not necessarily horizontally periodic.) Suppose that there is a relation $$\sum_{i\in S} q_i m_i=0$$ where $0\neq q_i\in \bQ$ for all $i\in S$. Suppose furthermore that this relation holds not only at $M$ but also at every real deformation of $M$ in $\cM$, and that $S$ is minimal with this property.

Then the cylinders $C_i, i\in S$ are $\cM$--parallel. 
\end{lem}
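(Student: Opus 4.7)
The plan is to convert the hypothesis into a rational-function identity on $V := T^M(\cM)$, extract per-hyperplane relations by partial fractions, and combine these with an invariance-of-partition check to apply the minimality of $S$. By Lemma \ref{L:persist}, at a real deformation $M_\zeta = \Phi^{-1}(\Phi(M)+\zeta)$ with $\zeta \in T^M_\bR(\cM)$ small, the heights $h_i$ are preserved while the circumference of $C_i$ becomes $c_i + \alpha_i^*(\zeta)$, where $\alpha_i$ is the core curve of $C_i$. The assumed relation therefore rewrites as the identity
\[
F(\zeta) \;:=\; \sum_{i \in S} \frac{q_i h_i}{c_i + \alpha_i^*(\zeta)} \;=\; 0
\]
on a neighborhood of $0$ in $T^M_\bR(\cM)$, and since $F$ is rational in $\zeta$ it persists as a rational identity on all of $V$.

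I would then partition $S$ into classes $T_1,\ldots,T_k$ by declaring $i \sim j$ iff the linear functionals $\alpha_i^*/c_i$ and $\alpha_j^*/c_j$ coincide on $V$, writing $\nu_\ell$ for the common functional on the class $T_\ell$. The $\nu_\ell$ are pairwise distinct and nonzero (the horizontal-stretch direction in $T^M(\cM)$ already pairs with $\alpha_i$ to give $c_i \neq 0$), so the affine hyperplanes $\{\nu_\ell = -1\} \subset V$ are distinct. The identity becomes
\[
F(\zeta) \;=\; \sum_{\ell=1}^k \frac{B_\ell}{1 + \nu_\ell(\zeta)}, \qquad B_\ell \;:=\; \sum_{i \in T_\ell} q_i m_i,
\]
and a pole/residue computation along each hyperplane $\{\nu_\ell = -1\}$ forces $B_\ell = 0$ for every $\ell$.

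The step I expect to be the main obstacle is upgrading ``$B_\ell = 0$ at $M$'' to ``$\sum_{i \in T_\ell} q_i m_i = 0$ at every real deformation of $M$'', without which minimality of $S$ cannot be invoked. I would handle this by checking that the partition $\{T_\ell\}$ is the same at every real deformation: if $\alpha_j^* = \lambda \alpha_i^*$ on $V$, then $\alpha_j^*(\zeta_0) = \lambda \alpha_i^*(\zeta_0)$ for every $\zeta_0$, so the condition $\alpha_i^*/c_i(M_{\zeta_0}) = \alpha_j^*/c_j(M_{\zeta_0})$ reduces algebraically to $c_j = \lambda c_i$, a condition on $M$ alone. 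Hence the residue argument can be rerun verbatim at any real deformation with the same partition, yielding $\sum_{i \in T_\ell} q_i m_i = 0$ at every real deformation and every $\ell$.

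Finally, suppose for contradiction $k \geq 2$. A singleton $T_\ell$ would force some $m_i = 0$, which is impossible for a cylinder, so each $T_\ell$ supports a genuine $\bQ$-linear relation among moduli holding along all real deformations. Since each such $T_\ell \subsetneq S$, this contradicts the minimality of $S$. Thus $k = 1$: the functionals $\alpha_i^*$ for $i \in S$ are all proportional on $T^M(\cM)$, the core curves $\alpha_i$ are pairwise $\cM$--collinear, and by the lemma immediately preceding the statement the cylinders $\{C_i : i \in S\}$ are pairwise $\cM$--parallel.
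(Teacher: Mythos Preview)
Your proof is correct and follows essentially the same strategy as the paper's: both reduce the persistent relation $\sum_{i\in S} q_i m_i=0$ to a linear-independence statement for the functions $1/\alpha_i$ with $\alpha_i$ the core-curve functionals on $V=T^M_\bR(\cM)$, then invoke minimality of $S$. Your partial-fraction/pole argument is exactly the content of the paper's auxiliary lemma (Lemma~\ref{L:Findep}), proved inline.

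One simplification worth noting: your ``main obstacle'' step is an artifact of your choice of coordinates. The paper parameterizes real deformations by the absolute class $v\in V$ near $\Re\Phi(M)$ (so $m_i(v)=h_i/\alpha_i(v)$), whereas you parameterize by the displacement $\zeta=v-\Re\Phi(M)$, which introduces the basepoint-dependent constants $c_i$ into your denominators. In the paper's coordinates the collinearity partition of the $\alpha_i$ manifestly does not depend on the basepoint, and once the residue argument yields $\sum_{i\in T_\ell} q_i h_i/\alpha_i(v)=0$ identically in $v$, the relation on $T_\ell$ holds at \emph{every} real deformation automatically. Even in your coordinates this was immediate: since $m_i(M_\zeta)=m_i/(1+\nu_\ell(\zeta))$ for $i\in T_\ell$, one has $\sum_{i\in T_\ell} q_i m_i(M_\zeta)=B_\ell/(1+\nu_\ell(\zeta))$, so $B_\ell=0$ already gives the relation at all real deformations without rerunning the argument. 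Your partition-invariance check is correct but unnecessary.
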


The proof uses the following elementary lemma.

\begin{lem}\label{L:Findep}
Suppose that $V$ is a finite dimensional real vector space, and $F\subset V^*$ is a collection of linear functionals on $V$, no two of which are collinear. Then the collection of functions  
$\frac{1}{w}$ for $w\in F$
are linearly independent over $\bR$. This remains true when the functions are restricted to any nonempty open set in $V$.
\end{lem}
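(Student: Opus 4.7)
The plan is to reduce the statement to a polynomial identity on $V$ and then evaluate that identity at carefully chosen points to isolate each coefficient. Suppose $c_1, \ldots, c_n \in \bR$ satisfy $\sum_{i=1}^n c_i/w_i \equiv 0$ on some nonempty open set $U\subset V$ (after shrinking $U$ we may assume it avoids every hyperplane $\ker w_i$). Clearing denominators gives
$$\sum_{i=1}^n c_i \prod_{j\neq i} w_j \equiv 0 \quad \text{on } U.$$
Fixing coordinates on $V$, the left hand side is a polynomial function on $V$, and a polynomial that vanishes on a nonempty open subset of $\bR^{\dim V}$ vanishes identically. Thus this identity holds on all of $V$.

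Next, for each fixed $k$ I would produce a vector $v_k \in V$ with $w_k(v_k)=0$ but $w_i(v_k)\neq 0$ for every $i\neq k$. Since no two of the $w_i$ are collinear, for each $i\neq k$ the restriction $w_i|_{\ker w_k}$ is a nonzero linear functional on $\ker w_k$, so $\ker w_k \cap \ker w_i$ is a proper subspace of $\ker w_k$. A real vector space is not the union of finitely many of its proper subspaces, so there exists $v_k\in \ker w_k \setminus \bigcup_{i\neq k} \ker w_i$, as desired.

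Evaluating the polynomial identity at $v_k$ kills every term with $i\neq k$, because each such term contains $w_k$ as a factor. This leaves $c_k \prod_{j\neq k} w_j(v_k) = 0$, and since $w_j(v_k)\neq 0$ for every $j\neq k$, we conclude $c_k=0$. Running this over all $k$ yields the linear independence claim, and the argument handles the restricted-to-$U$ version at the same time, since the passage to an identity on all of $V$ was the very first step.

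The only real obstacle is the existence of the vectors $v_k$, which is a mild general position argument relying on the pairwise non-collinearity hypothesis and the infinitude of $\bR$; apart from that the proof is a direct partial-fractions-style manipulation.
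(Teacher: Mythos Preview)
Your proof is correct and takes essentially the same approach as the paper: both hinge on selecting a point $v_k \in \ker w_k \setminus \bigcup_{j\neq k} \ker w_j$, justified by the identical non-collinearity argument. The only cosmetic difference is that the paper observes directly that $g=\sum q_i w_i^{-1}$ has a pole at $v_k$ whenever $q_k\neq 0$, whereas you clear denominators first and evaluate the resulting polynomial at $v_k$; your version also cleanly folds the open-set claim into the main argument via the identity principle for polynomials, which the paper instead handles by a one-line remark at the end.
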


\begin{proof}
Consider a linear combination $g=\sum q_i w_i^{-1}$ with $q_i\in \bR$ and $w_i\in F$ distinct. Suppose some $q_i$ is non-zero. Pick 
$$v\in \ker(w_i)\setminus \bigcup_{j\neq i} \ker(w_j).$$
Note that $g$ has a pole at $v$, and so in particular $g$ is not zero. 

It follows that if $\sum q_i w_i^{-1}=0$, then all $q_i=0$. 

The restriction map from $\span w_i^{-1}$, restricting from $V$ to an open subset of $V$, is an injection. So the statement remains true when the functions are restricted to any nonempty open set in $V$.
\end{proof}

\begin{proof}[\textbf{\emph{Proof of Lemma \ref{L:key1}.}}]
Consider a neighborhood $\cV$ of $\Re\Phi(M)$ in $$V= T_\bR^M(\cM):= H^1(M,\Sigma;\bR)\cap T^M(\cM),$$ small enough that for all $v\in \cV$, $\Phi^{-1}(v+\Im\Phi(M))$ is a valid real deformation of $M$.  

For each $i$ we define a function $m_i$ on $\cV$, by setting $m_i(v)$ to be the modulus of the cylinder $C_i$ on $\Phi^{-1} (v+\Im\Phi(M)) $. Over all real deformations, the height $h_i$ of $C_i$ is constant, but the circumference varies according to the formula 
$$c_i= \alpha_i(v+\Im\Phi(M))= \alpha_i(v),$$
where $\alpha_i\in H_1(M,\bZ)$ is the core curve of the cylinder $C_i$. Hence we have 
$$m_i(v) = \frac{h_i}{\alpha_i(v)}.$$
Now suppose there is some $S\subset \{1, \ldots, n\}$, and $0\neq q_i\in \bQ$ so that 
$$\sum_{i\in S} q_i\frac{h_i}{\alpha_i(v)}=0$$
 for all $v\in \cV$, and suppose $S$ is minimal with this property. Then by the previous lemma, for all $i,j\in S$ we get that 
$\alpha_i(v)$ and $\alpha_j(v)$ are collinear as functionals on $T_\bR^M(\cM)$. Hence $\alpha_i$ and $\alpha_j$ are collinear in $T_\bR^M(\cM)^*$, and hence also in $T^M(\cM)^*$.
\end{proof} 

\begin{lem}
Suppose $M\in \cM$ is horizontally periodic, and $\cC$ is an equivalence class of $\cM$--parallel horizontal cylinders. Then there is a real deformation $M'\in \cM$ of $M$ where the set of moduli of the cylinders in $\cC$ becomes independent of the set of moduli of the remaining horizontal cylinders. 
\end{lem}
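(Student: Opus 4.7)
The plan is to parameterize real deformations of $M$ by a small open neighborhood $\cV$ of $\Re\Phi(M)$ inside $V := T^M_\bR(\cM)$, shrunk enough that each horizontal cylinder $C_i$ of $M$ persists (Lemma \ref{L:persist}) and has strictly positive circumference $\alpha_i(v)$ on every deformation $M_v := \Phi^{-1}(v + i\Im\Phi(M))$. The modulus $m_i(v) = h_i/\alpha_i(v)$ is then a rational function on $\cV$. I will study the $\bQ$-subspace $W$ of rational functions on $\cV$ spanned by the $m_i$, show that $W_\cC := \span_\bQ\{m_i : C_i \in \cC\}$ meets $W_{\cC^c} := \span_\bQ\{m_j : C_j \notin \cC\}$ only at $0$ (as abstract rational functions), and then transport this to the required $\bQ$-independence of moduli by evaluating at a sufficiently generic $v_0 \in \cV$.

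The key structural step is a direct-sum decomposition $W = \bigoplus_{l=1}^k W^{(l)}$ indexed by the equivalence classes $E_1, \ldots, E_k$ of $\cM$-parallel horizontal cylinders, say with $\cC = E_{l_0}$. Fix a representative index $i_l \in E_l$ for each $l$. Since the cylinders in $E_l$ are $\cM$-parallel, each $\alpha_i$ with $i \in E_l$ is a real scalar multiple of $\alpha_{i_l}$ as an element of $V^*$, so each $m_i$ with $i \in E_l$ lies in the single real line $\bR\cdot\alpha_{i_l}^{-1}$ inside the rational functions on $\cV$. For distinct $l$ the representatives $\alpha_{i_l}$ are non-collinear in $V^*$, so Lemma \ref{L:Findep} yields that $\alpha_{i_1}^{-1},\ldots,\alpha_{i_k}^{-1}$ are $\bR$-linearly independent on $\cV$. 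Setting $W^{(l)} := \span_\bQ\{m_i : i \in E_l\} \subset \bR\cdot\alpha_{i_l}^{-1}$ therefore gives the claimed internal direct sum, with $W_\cC = W^{(l_0)}$ and $W_{\cC^c} = \bigoplus_{l \neq l_0} W^{(l)}$, forcing $W_\cC \cap W_{\cC^c} = \{0\}$.

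To conclude, note that $W$ is a finitely generated, hence countable, $\bQ$-vector space, and that each nonzero $f \in W$ vanishes on a proper algebraic subset of $\cV$. The complement $U \subset \cV$ of this countable union of zero loci is a dense $G_\delta$, and for every $v_0 \in U$ the evaluation map $W \to \bR$, $f \mapsto f(v_0)$, is $\bQ$-linearly injective. Pick any $v_0 \in U$ and set $M' := M_{v_0}$. If $\sum_{i \in \cC} q_i m_i(v_0) = \sum_{j \notin \cC} q_j' m_j(v_0) = x$ for some rationals, then $f := \sum q_i m_i \in W_\cC$ and $g := \sum q_j' m_j \in W_{\cC^c}$ agree at $v_0$, hence coincide as elements of $W$, hence lie in $W_\cC \cap W_{\cC^c} = \{0\}$; in particular $x = 0$. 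This is exactly the desired independence of moduli on $M'$.

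I expect the main subtlety to be bookkeeping about linear independence: verifying that the earlier-established equivalence between $\cM$-parallelism of cylinders and $\cM$-collinearity of core curves in $T^M(\cM)^*$ does descend to collinearity of the $\alpha_{i_l}$'s as real linear functionals on $V$, and that Lemma \ref{L:Findep} genuinely applies on the open subset $\cV$ rather than only on all of $V$. Once the direct-sum decomposition $W = \bigoplus_l W^{(l)}$ is rigorously in place, the remainder of the argument is a routine Baire-category / genericity conclusion.
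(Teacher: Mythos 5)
Your proof is correct, and it does take a somewhat different route than the paper's. The paper argues by contradiction: assuming no real deformation achieves independence, it (tersely, essentially by Baire category) produces a single rational relation $\sum_{i=1}^r q_i m_i = \sum_{j=r+1}^n q_j m_j$ that holds identically on all real deformations with neither side vanishing identically, then minimizes the support $S$ of the relation and invokes Lemma \ref{L:key1} to conclude that all $C_i$, $i \in S$ are $\cM$--parallel --- contradicting the hypothesis that $\cC$ is an equivalence class, since $S$ meets both $\cC$ and its complement. You instead organize the argument constructively: the $\bQ$--span $W$ of the modulus functions decomposes as a direct sum $\bigoplus_l W^{(l)}$ over $\cM$--parallel equivalence classes, with each $W^{(l)}$ inside a single line $\bR \cdot \alpha_{i_l}^{-1}$, the directness coming straight from Lemma \ref{L:Findep}. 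Then a single explicit Baire-category argument over the countable set $W$ yields a point $v_0$ at which evaluation $W \to \bR$ is $\bQ$--injective, so $W_\cC \cap W_{\cC^c} = \{0\}$ transfers to the desired independence at $M' = M_{v_0}$. Both arguments rest on the same two ingredients --- Lemma \ref{L:Findep} and Baire category --- but yours inlines the content of Lemma \ref{L:key1} and makes the Baire step fully explicit, avoiding the minimality-of-support bookkeeping; the paper's version buys the reusable Lemma \ref{L:key1}, which is invoked again elsewhere. The two "subtleties" you flag at the end are both resolved by material already in the paper: the remark in Section \ref{S:RD} shows that $\cM$--collinearity in $T^M(\cM)^*$ is equivalent to collinearity in $T_\bR^M(\cM)^*$ (so the representatives $\alpha_{i_l}$ are pairwise non-collinear on $V$), and the statement of Lemma \ref{L:Findep} explicitly asserts its validity after restriction to any nonempty open subset of $V$.
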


\begin{proof}
Suppose $\cC=\{C_1, \ldots, C_r\}$, and the remaining cylinders are $C_{r+1}, \ldots, C_{n}$. Let $m_i$ be the modulus of the cylinder $C_i$; so $m_i$ will be a function of the real deformation. If it is not possible to make $\{m_1,\ldots, m_r\}$ independent of $\{m_{r+1},\ldots, m_n\}$ using a real deformation in $\cM$, then there is some rational relation 
$$\sum_{i=1}^r q_i m_i = \sum_{j=r+1}^n q_j m_j$$
where all $q_i\in \bQ$, and this relation holds all real deformations of $M$, and neither the right hand side nor the left hand side is identically zero. 

Assume that we are given such a relation where the number of nonzero $q_i, i=1,\ldots, n$ is minimized. Let $S\subset \{1,\ldots, n\}$ be the set of $i$ for which $q_i\neq 0$. It is not hard to see that this $S$ is minimal, in that there is no rational relation among the $m_i$ where $i$ runs over any proper subset of $S$. 

Therefore if such a relation exists, by Lemma \ref{L:key1} we see that some $C_i$ for $i\leq r$ is $\cM$--parallel to some $C_j$ for $j>r$, which is a contradiction. 
\end{proof}

\begin{lem}\label{L:key2}
Suppose $M\in \cM$ is horizontally periodic, and $\cC$ an equivalence class of $\cM$--parallel horizontal cylinders.  

Then $u_t^\cC(M)$ remains in $\cM$ for all $t$. 
\end{lem}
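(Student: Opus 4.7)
The plan is to reduce to Lemma \ref{L:hclosure} by first passing to an auxiliary real deformation of $M$, and then to transfer the conclusion back to $M$ using the linear structure of cylinder shear in period coordinates.

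First, I will apply the preceding lemma to obtain a real deformation $M'\in\cM$ of $M$ at which the set of moduli of the cylinders in $\cC$ is $\bQ$-independent from the set of moduli of the remaining horizontal cylinders. By Lemma \ref{L:persist}, $M'$ is again horizontally periodic, so Lemma \ref{L:hclosure} (with $\cC_1=\cC$) gives $u_t^\cC(M')\in\cM$ for all $t\in\bR$.

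Next, I will transport this from $M'$ to $M$. Set $V=T^M_\bR(\cM)$, so that in a period-coordinate neighborhood $\cU$ of $M$ one has $\Phi(\cU\cap\cM)=\Phi(M)+(V\otimes\bC)$. Since $M'$ is a real deformation, $\Phi(M')-\Phi(M)\in V$. By Lemma \ref{L:persist} the horizontal cylinders of $M$ persist at $M'$ with the same heights, so under a common marking the class $\eta_\cC\in H^1(M,\Sigma;\bC)$ agrees with the analogous class at $M'$. Applying the linearity of cylinder shear in period coordinates from Section \ref{S:periods},
$$\Phi(u_t^\cC(M'))-\Phi(M) \;=\; (\Phi(M')-\Phi(M)) + t\eta_\cC$$
for small $t$. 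The left side lies in $V\otimes\bC$ because $u_t^\cC(M')\in\cM$; the first summand on the right lies in $V$; therefore $t\eta_\cC\in V\otimes\bC$, i.e. $\eta_\cC\in T^M(\cM)$. This immediately yields $\Phi(u_t^\cC(M))=\Phi(M)+t\eta_\cC\in\Phi(\cU\cap\cM)$ for small $t$, so $u_t^\cC(M)\in\cM$ for small $t$.

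Finally, I will extend to all $t\in\bR$ by a continuation argument. The cylinders in $\cC$ persist, with their heights preserved, under any shear in the $\cC$-direction; consequently the same argument applies at any point $u_{t_0}^\cC(M)\in\cM$ of the orbit, showing that the set $\{t\in\bR : u_t^\cC(M)\in\cM\}$ is open. Since $\cM$ is closed, this set is also closed, and it contains $0$, so it is all of $\bR$.

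The main obstacle I anticipate is the bookkeeping around markings needed to identify $\eta_\cC$ as a single cohomology class across $M$, $M'$, and the points along the shear flow. The remark at the beginning of Section \ref{S:RD} and Lemma \ref{L:persist} together make this identification canonical once a small enough neighborhood is fixed, so the subtlety is in verifying — not in proving — that the identification is consistent; the rest of the argument is a direct combination of Lemma \ref{L:hclosure}, the preceding lemma, and the linearity of $u_t^\cC$ in period coordinates.
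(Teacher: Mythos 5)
Your proof is correct and takes essentially the same route as the paper: pass to a real deformation $M'$ where the moduli of $\cC$ become independent, invoke Lemma \ref{L:hclosure} to get $\eta_\cC\in T^{M'}(\cM)$, and observe that $\eta_\cC$ is the same cohomology class at $M$ and $M'$ (heights unchanged) so that $\eta_\cC\in T^M(\cM)$ by the linear structure of $\cM$ in period coordinates. The paper's proof is shorter because it leaves the transfer from $M'$ to $M$ and the extension from small $t$ to all $t$ implicit; your explicit period-coordinate computation and open/closed continuation argument fill in exactly those details and are both sound.
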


\begin{proof}
It suffices to show that $\eta_\cC \in T^M(\cM)$, where $\eta_\cC$ is described in Section \ref{S:periods}. 

By Lemma \ref{L:key1} there is a real deformation $M'\in \cM$ of $M$ where the set of moduli of the cylinders in $\cC$ become independent of the set of moduli of the remaining horizontal cylinders. It is easy to check that the definition of $\eta_\cC$ is the same on $M$ and $M'$, since the heights of cylinders have not changed. 

By Lemma \ref{L:hclosure}, $\eta_\cC$ is in the tangent space of $\cM$ at $M'$. 
\end{proof}

\begin{lem}\label{L:key3}
Suppose that $M\in \cM$ is periodic in the horizontal direction, and let $\cC$ be a collection of horizontal cylinders on $M$, such that there is some deformation in $\cM$ of $M$ where the horizontal cylinders of $\cC$ remain horizontal but all other horizontal cylinders of $M$ do not. 

Then $u_t^\cC(M)$ remains in $\cM$ for all $t$. 
\end{lem}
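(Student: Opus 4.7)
The plan is to reduce to Lemma \ref{L:key2} by showing that the hypothesis forces $\cC$ to be a union of entire equivalence classes of $\cM$-parallel horizontal cylinders on $M$. Once this is in place, the conclusion will follow by summing the tangent vectors $\eta_{\cE_i}$ produced by Lemma \ref{L:key2} for each equivalence class $\cE_i\subset \cC$, and then invoking linearity in period coordinates.

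First I would establish an ``$\cM$-saturation'' property: if $C\in\cC$ and $C'$ is a horizontal cylinder of $M$ which is $\cM$-parallel to $C$, then $C'\in\cC$. Suppose for contradiction that $C\in\cC$ and $C'\notin\cC$ are $\cM$-parallel, and let $M'\in\cM$ be the nearby translation surface provided by the hypothesis. On $M'$ the cylinder $C$ remains horizontal, so its holonomy vector is real. By the very definition of $\cM$-parallel, $C$ and $C'$ persist as parallel cylinders on every nearby surface in $\cM$, so in particular the holonomy of $C'$ on $M'$ is parallel to that of $C$ and hence real; that is, $C'$ is also horizontal on $M'$. This contradicts the hypothesis, which stipulates that every horizontal cylinder of $M$ outside $\cC$ fails to remain horizontal on $M'$.

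With saturation in hand, I would write $\cC$ as a disjoint union $\cE_1\sqcup\cdots\sqcup\cE_\ell$ of equivalence classes of $\cM$-parallel horizontal cylinders on $M$. Lemma \ref{L:key2} applies to each $\cE_i$ and places $\eta_{\cE_i}$ in $T^M(\cM)$, while the explicit formula of Lemma \ref{L:i} gives the additivity
$$\eta_\cC \;=\; \sum_{i=1}^\ell \eta_{\cE_i} \;\in\; T^M(\cM).$$
Since $\Phi(u_t^\cC(M)) = \Phi(M) + t\eta_\cC$ is a linear motion in period coordinates and $\cM$ is cut out locally by real-linear equations, this places $u_t^\cC(M)$ in $\cM$ for small $t$; iterating the argument at the new basepoint $u_{t_0}^\cC(M)$ and using the one-parameter group identity $u_{t+t_0}^\cC = u_t^\cC\circ u_{t_0}^\cC$ then extends the conclusion to all $t\in\bR$, exactly as in Lemma \ref{L:key2}.

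The main obstacle is the saturation step, since one has to translate the hypothesis about a single witnessing deformation into a global statement about the entire equivalence class structure of $\cM$-parallel horizontal cylinders on $M$. Once saturation is established, the remainder of the argument is essentially bookkeeping through Lemmas \ref{L:i} and \ref{L:key2}, with no genuinely new ingredients required.
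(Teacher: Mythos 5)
Your proof is correct and follows essentially the same route as the paper's: the paper's one-line assertion that ``no cylinder in $\cC$ can be $\cM$-parallel to a horizontal cylinder not in $\cC$'' is exactly your saturation step, after which both arguments decompose $\cC$ into $\cM$-parallel equivalence classes, invoke Lemma \ref{L:key2}, and sum the resulting $\eta_{\cC_i}$. Your elaboration of the saturation step (using the witnessing deformation $M'$ and the definition of $\cM$-parallel to derive a contradiction) is the correct justification for what the paper leaves implicit.
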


\begin{proof}
No cylinder in $\cC$ can be $\cM$--parallel to a horizontal cylinder not in $\cC$. Thus $\cC$ is the union of equivalence classes $\cC_1, \ldots, \cC_s$ of $\cM$--parallel cylinders. The previous lemma gives that $\eta_{\cC_i}$ is in the tangent space to $\cM$ for each $i$, so it follows that $\eta_\cC=\sum_i \eta_{\cC_i}$ is in the tangent space as well. 
\end{proof}

\section{Proof of the Cylinder Deformation Theorem}\label{S:proof}

Our remaining key tool is

\begin{thm}[Smillie-Weiss, Cor. 6 in \cite{SW2}]\label{T:SW}
Every $u_t$--orbit closure contains a horizontally periodic translation surface. 
\end{thm}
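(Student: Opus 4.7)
The plan is to produce a horizontally periodic surface as a limit point of the $u_t$-orbit of $M$, combining Minsky-Weiss non-divergence for horocycle flow with a maximization argument for the amount of horizontal cylindrical area, and using the geodesic renormalization $a_{-s}$ to force degeneration of minimal components of the horizontal foliation. First, by Minsky-Weiss non-divergence, for any translation surface $M$ the horocycle orbit $u_t M$ returns to a fixed compact set $K \subset \cH$ with positive density of times, so $\overline{\{u_t M : t \in \bR\}} \cap K$ is nonempty.

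The core idea is to introduce a quantitative invariant. Let $\nu(M')$ denote the total flat area of horizontal cylinders on $M'$. This quantity is $u_t$-invariant, since horocycle flow preserves each horizontal cylinder together with its height and circumference. It is also lower semicontinuous under limits in the stratum: horizontal cylinders persist under small perturbations, and additional horizontal cylinders can appear in a limit when non-horizontal saddle connections happen to become horizontal. Combining compactness via Minsky-Weiss with lower semicontinuity, $\nu$ attains its supremum over the orbit closure at some $M^* \in \overline{\{u_t M\}} \cap K$.

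The main step is to argue that this maximizer $M^*$ must be horizontally periodic. Suppose not; then the horizontal foliation of $M^*$ has a nontrivial minimal component $\Gamma$ of positive area. Using the conjugation relation $a_{-s} u_t = u_{t e^s} a_{-s}$ (easily verified from the matrix definitions), Minsky-Weiss non-divergence applied to the family $\{a_{-s} M^*\}_{s \geq 0}$ produces, for each $s$, a time $\tau_s$ with $u_{\tau_s}(a_{-s} M^*) \in K'$ for a fixed compact $K' \subset \cH$. Extract a subsequential limit $M^{**} = \lim u_{\tau_n}(a_{-s_n} M^*)$. The vertical holonomies of $a_{-s_n} M^*$ are contracted by $e^{-s_n}$, so the only way the sequence can remain in $K'$ is for the vertical-holonomy structure to collapse precisely along $\Gamma$, which forces $\Gamma$ to become cylinder-decomposed in $M^{**}$. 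Consequently $\nu(M^{**}) > \nu(M^*)$, contradicting maximality --- provided one can show $M^{**}$ actually lies in $\overline{\{u_t M^*\}} \subset \overline{\{u_t M\}}$.

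The main obstacle is precisely this last descent: the limit $M^{**}$ is constructed using both $a_{-s}$ and $u_\tau$, and a priori lies only in the $GL(2,\bR)$-orbit closure rather than the $u_t$-orbit closure. The remedy is a diagonal approximation argument exploiting that on surfaces whose vertical holonomies along cycles in $\Gamma$ are already very small (which the $a_{-s_n}$-renormalization forces in the limit), the action of $a_{-s}$ on those holonomies is mimicked to first order by the action of $u_\tau$ on the complementary relative periods --- essentially because in the degenerate regime, shearing a near-horizontal saddle connection has the same effect as shrinking its vertical component. Controlling the relative-period approximation as the minimal component collapses, and verifying that a sequence $u_{\sigma_n} M^*$ with $\sigma_n$ chosen diagonally converges to the same $M^{**}$, is the technical heart of the argument and is where the Minsky-Weiss and Smillie-Weiss analyses of horocycle dynamics on strata do the real work.
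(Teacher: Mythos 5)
This result is cited in the paper from Smillie--Weiss (Cor.\ 6 of \cite{SW2}) and is not reproved there; the paper only remarks that the proof rests on Minsky--Weiss quantitative nondivergence and the existence of compact minimal sets for horocycle flow. Your proposal must therefore be judged on its own terms, and as written it has genuine gaps.

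The semicontinuity claim is wrong as stated: the total horizontal cylinder area $\nu$ is neither lower nor upper semicontinuous on a stratum, since an arbitrarily small rotation destroys every horizontal cylinder and so $\nu$ can jump to $0$ in any neighborhood. What you actually need (and should isolate and prove) is a one-sided monotonicity along the horocycle orbit closure: if $u_{t_n}M\to M'$ in the stratum, then $\nu(M')\ge\nu(M)$, because the horizontal cylinders of $M$ stay horizontal with fixed height and area along the orbit and pass to limits. Even granting that, passing from ``the value can only increase'' to ``a maximizer exists in $\overline{\{u_tM\}}$'' needs a compactness argument that your sketch omits; Minsky--Weiss gives only positive-density recurrence to a fixed compact set, not compactness of the orbit closure, and this is exactly where Smillie--Weiss's minimal-set machinery does real work.

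The more serious gap is the one you flag yourself. Your $M^{**}$ is manufactured with the vertical contraction $a_{-s}$, so a priori it lies only in the $GL(2,\bR)$-orbit closure of $M^*$, not in $\overline{\{u_tM^*\}}$. This is not a technical loose end to be dispatched by a ``diagonal approximation''; it is the whole content of the theorem. The approximation you gesture at cannot work in period coordinates: $u_\tau$ fixes every imaginary period exactly, while $a_{-s}$ scales them by $e^{-s}$, and the two generate a nonabelian group, so no choice of $\sigma_n$ makes $u_{\sigma_n}M^*$ track $u_{\tau_n}a_{-s_n}M^*$. Moreover, the claim that vertical contraction ``forces $\Gamma$ to become cylinder-decomposed'' in the limit is unjustified: after area renormalization $a_{-s}$ is essentially backward geodesic flow, and Masur-type criteria indicate that such a flow escapes every compact set when the horizontal foliation carries a minimal component of positive area, rather than converging to a surface with more cylinders. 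The genuine Smillie--Weiss proof works entirely inside horocycle dynamics and never leaves the $u_t$-orbit closure; a correct argument here must do the same.
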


Theorem \ref{T:SW} relies on the quantitative recurrence of horocycle flow, and the existence of compact minimal sets for the horocycle flow, established by Minsky-Weiss \cite{MW}. 

\begin{proof}[\textbf{\emph{Proof of Theorem \ref{T:main}.}}]
There is some horizontally periodic translation surface $M'$ on which the horocycle flow orbit of $M$ accumulates. Each horizontal cylinder of $M$ is ``present" on $M'$, and it has the same height and circumference on both $M$ and $M'$. 

Let $\cC$ be the set of horizontal cylinders of $M$. We will also speak of $\cC$ as being a collection of cylinders on $M'$. 

Since the horocycle flow orbit of $M$ accumulates at $M'$, there are arbitrarily small deformations of $M'$ where the cylinders of $\cC$ stay horizontal, but the remaining horizontal cylinders of $M'$ do not. These deformations are exactly $u_t(M)$ for $t$ very large, so that $u_t(M)$ is very close to $M'$. The only horizontal cylinders on $u_t(M)$ are the ones coming from $M$. So $u_t(M)$ is a deformation of $M'$ (just something close to $M'$) where the cylinders in $C$ remain horizontal, but all the other horizontal cylinders on $M'$ are no longer horizontal on $u_t(M)$.

Hence by Lemma \ref{L:key3}, $\eta_\cC$ is in the tangent space to $T_{M'}(\cM)$ at $M'$. Hence $\eta_\cC$ is in the tangent space to $T^M(\cM)$ at $M$. We conclude that the cylinder deformations of $M$, which result from adding $\eta_\cC$ or $i\eta_\cC$ in period coordinates, remain in $M$. 
\end{proof}

\section{Complete periodicity}\label{S:CP}

We now prove Theorem \ref{T:CP}. 

\begin{lem}\label{L:gamma}
Let $M$ be a translation surface that is not horizontally periodic. Then there is some curve $\gamma$ on $M$ which is supported off the union of the horizontal cylinders on $M$ for which the holonomy of $\gamma$ is not real (that is, not horizontal). 
\end{lem}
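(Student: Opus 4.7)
The idea is to argue by contradiction via a triangulation of the complement of the horizontal cylinders. Let $Y \subset M$ be the closed subsurface obtained by removing the interiors of all horizontal cylinders $C_1, \ldots, C_k$ of $M$. Its boundary $\partial Y$ consists of the top and bottom boundary circles of the $C_i$, each a finite union of horizontal saddle connections with corners at singularities of $\omega$. Since $M$ is not horizontally periodic, the cylinders $C_i$ do not cover $M$, so $Y$ has positive area. Suppose for contradiction that \emph{every} curve supported off the union of the horizontal cylinders has real holonomy; in particular, every saddle connection of $M$ whose interior lies in $Y$ is horizontal.

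Next I would triangulate $Y$ by saddle connections of $M$ lying in $Y$. Standardly, a compact translation surface with piecewise-horizontal geodesic boundary and with all cone points located on the boundary or in the interior admits a geodesic triangulation whose vertices are the singularities of $\omega$ in $Y$ and whose edges are saddle connections of $M$ contained in $Y$; one may construct it by iteratively cutting along shortest interior saddle connections until only triangles remain. Under the contradiction hypothesis, every edge of this triangulation is horizontal. But a Euclidean triangle with three horizontal sides is degenerate, i.e.\ has zero area. Summing the areas of the triangles gives $\operatorname{area}(Y) = 0$, contradicting the observation above. Hence some saddle connection in $Y$ has non-real holonomy, yielding the curve $\gamma$.

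The main thing to get right is the triangulation step: one needs to justify that $Y$ really can be cut by saddle connections of $M$ (not just geodesic arcs in some abstract flat structure on $Y$) that never enter the horizontal cylinders. This is a standard geodesic triangulation argument --- the shortest geodesic arc in $Y$ joining two singularities on $\partial Y$ (or in the interior) avoids the open cylinders by definition, and is automatically a saddle connection of $M$ --- but writing it up cleanly requires being mindful of the piecewise-horizontal boundary and of possible singularities in the interior of $Y$. If one wants to avoid invoking the triangulation black box, an alternative is to argue directly: pick an interior point $p \in Y$ with a small flat disk neighborhood $U \subset Y$, take a vertical segment $\gamma \subset U$, and observe that $\gamma$ is a curve supported off the horizontal cylinders whose holonomy $\int_\gamma \omega$ is purely imaginary and nonzero --- at the cost of $\gamma$ being an arc rather than a (relative) cycle.
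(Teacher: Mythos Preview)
There is a real gap, and it sits exactly at your ``in particular'' step. The way this lemma is consumed immediately afterward (in Lemma~\ref{L:notinspan}) is by pairing $\gamma$ with $p(\eta_\cC)\in H^1(M;\bC)$, so $\gamma$ must be a \emph{closed} curve, an absolute homology class. Under that reading, the negation of the lemma is ``every closed loop in $Y$ has real holonomy,'' and this does \emph{not} imply that every saddle connection in $Y$ is horizontal: a saddle connection is not a loop, and nothing a priori forces its two endpoints to sit at the same height. So your triangulation contradiction never gets off the ground. If instead you let ``curve'' include arcs, the triangulation argument is fine, but then your output is only a relative cycle, which is too weak for the application. (Your parenthetical about the vertical segment being ``an arc rather than a (relative) cycle'' suggests you believed a relative cycle suffices; it does not.)

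For comparison, the paper's proof is completely different and directly manufactures a closed loop: pick a Poincar\'e-recurrent point $x$ in the positive-measure complement $Y$, flow horizontally until the orbit returns close to $x$, and close up by a short segment. That closing segment must have nonzero vertical part, since otherwise the horizontal leaf through $x$ would close up and $x$ would lie in a horizontal cylinder. If you prefer a contradiction argument in your spirit, drop the triangulation and use a height function: if every closed loop in $Y$ had real holonomy then $\Im(\omega)|_Y = dh$ would be exact, and a regular level set of $h$ at a value distinct from the finitely many boundary and singular values would be a closed horizontal geodesic interior to $Y$, hence the core curve of a horizontal cylinder---contradicting the definition of $Y$.
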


\begin{proof}
Let $x$ be some generic point in $M$ not contained in (or on the boundary of) any horizontal cylinder. (By generic, we mean generic with respect to the Poincar\'e Recurrence Theorem applied to the horizontal straight line flow in $M$.)  Flow from $x$ in the horizontal direction for a long time, until returning very close to $x$. Close up this path to get a curve $\gamma$ with the required property. 
\end{proof}

\begin{lem}\label{L:notinspan}
Let $M=(X,\omega)$ be a translation surface, and let $\cC$ be the collection of all horizontal cylinders on $M$. Suppose that $\cC$ is nonempty, but that $M$ is not horizontally periodic. Then $$p(\eta_\cC)\notin \span_\bC (\Re(p([\omega])), \Im(p([\omega]))).$$
\end{lem}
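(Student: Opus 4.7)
The plan is to argue by contradiction. Suppose $p(\eta_\cC) = a\,\Re p([\omega]) + b\,\Im p([\omega])$ for some $a, b \in \bC$. I will derive $a = b = 0$ by pairing both sides with two absolute cycles in $H_1(X)$ on which $\eta_\cC$ vanishes for structural reasons, and then obtain a contradiction by showing $p(\eta_\cC) \neq 0$. Throughout I use that, for $\beta\in H_1(X)$, the pairing $p(\eta_\cC)(\beta)$ agrees with $\eta_\cC(\beta)$ via the inclusion $H_1(X)\hookrightarrow H_1(X,\Sigma)$, and that $\Re p([\omega])(\beta)=\Re \int_\beta \omega$, $\Im p([\omega])(\beta)=\Im\int_\beta\omega$.

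First, pair with the core curve $\alpha_i$ of any cylinder $C_i \in \cC$. By the explicit description of $\eta_\cC$ in Section \ref{S:periods}, $\eta_\cC(\alpha_i) = 0$. Since $\int_{\alpha_i}\omega = c_i$ is a nonzero real number, the right-hand side evaluates to $a c_i$. Hence $a = 0$. Next, pair with the curve $\gamma$ produced by Lemma \ref{L:gamma}. Since $\gamma$ is supported off the cylinders of $\cC$, it is a relative cycle disjoint from every cylinder in $\cC$, so $\eta_\cC(\gamma) = 0$. After setting $a=0$, the right-hand side becomes $b\,\Im\!\int_\gamma \omega$, which is nonzero by the choice of $\gamma$. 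Therefore $b = 0$, and we conclude $p(\eta_\cC) = 0$.

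To contradict this, I will show $p(\eta_\cC) \neq 0$. By Lemma \ref{L:i}, $\eta_\cC = \sum_i h_i I_{\alpha_i}$, and because $I_{\alpha_i}$ records algebraic intersection number with $\alpha_i$ on absolute cycles, $p(I_{\alpha_i})$ is the Poincar\'e dual of $[\alpha_i]\in H_1(X;\bR)$. Hence $p(\eta_\cC)$ is Poincar\'e dual to $\sum_i h_i [\alpha_i]$. Evaluating $[\omega]$ against this class gives $\sum_i h_i c_i$, which equals the total area of the cylinders in $\cC$ and is strictly positive because $\cC$ is nonempty. Thus $\sum_i h_i [\alpha_i]\neq 0$ in $H_1(X;\bR)$, so $p(\eta_\cC)\neq 0$, contradicting the previous paragraph.

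The only mild subtlety is verifying that $p(I_{\alpha_i})$ is indeed Poincar\'e dual to $[\alpha_i]$, but this is immediate from Lemma \ref{L:i} together with the description of the three spanning types of relative cycles in Section \ref{S:periods}: on absolute cycles $\beta$, the class $I_{\alpha_i}$ counts precisely the net number of times $\beta$ crosses $C_i$ from bottom to top, which is the intersection number $\beta\cdot\alpha_i$.
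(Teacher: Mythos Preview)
Your proof is correct and follows essentially the same approach as the paper: both pair with a core curve to kill the real-part coefficient, with the curve from Lemma~\ref{L:gamma} to kill the imaginary-part coefficient, and then invoke Lemma~\ref{L:i} to see $p(\eta_\cC)\neq 0$. Your verification that $p(\eta_\cC)\neq 0$ via Poincar\'e duality and pairing $\sum_i h_i[\alpha_i]$ against $[\omega]$ is a minor variant of the paper's remark about classes with positive intersection number with each core curve.
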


\begin{proof}
Suppose to the contrary that $$p(\eta_\cC)=c_1 \Re(p([\omega]))+c_2 \Im(p([\omega])).$$ Since $\eta_\cC$ evaluated at any core curve of a cylinder is zero, we see that $c_1=0$. 

Now consider the curve $\gamma$ produced by Lemma \ref{L:gamma}. Note that $p(\eta_\cC)(\gamma)= 0$, but $\Im([\omega])(\gamma)\neq 0$, so we see that $c_2=0$ as well. 

However, it is not hard to see that $p(\eta_\cC)\neq 0$, because it pairs nontrivially with homology classes whose intersection number with each horizontal core curve is positive (Lemma \ref{L:i}). 
\end{proof}

\begin{proof}[\eb{Proof of Theorem \ref{T:CP}.}]
Suppose that $M=(X,\omega)$ is not completely periodic. Without loss of generality, rotate $M$ so that there is at least one cylinder in the horizontal direction, but $M$ is not horizontally periodic. Let $\cC$ be the collection of horizontal cylinders on $M$. Let $\cM$ be any affine invariant submanifold containing $M$. By Theorem \ref{T:main}, we know that $\eta_\cC\in T^M(\cM)$. By Lemma \ref{L:notinspan} we see that $p(T^M(\cM))$ contains at least three linearly independent vectors, namely $\Re(p([\omega])), \Im(p([\omega])), p(\eta_\cC)$. 

We have shown that if $\cM$ contains a translation surface which is not completely periodic, then $\dim_\bC p(T(\cM))>2$. 
\end{proof}

\section{Field of definition}\label{S:field}

In this section we prove Theorem \ref{T:kM1}, which will be a corollary of the following more precise result. 

\begin{thm}\label{T:kM2}
Suppose that $M\in \cM$ has an equivalence class $\cC$ of $\cM$--parallel cylinders  consisting of $r\geq 1$ cylinders with circumferences $c_1, \ldots, c_r$. Then the field of definition of $\cM$ is $\bk(\cM)= \bQ[c_2/c_1, \ldots, c_r/c_1]$ if $r>1$ and $\bk(\cM)= \bQ$ if $r=1$.
\end{thm}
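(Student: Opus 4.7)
Set $k := \bQ[c_2/c_1,\ldots,c_r/c_1]$ for $r > 1$ and $k := \bQ$ for $r = 1$. The proof naturally splits into the two inclusions $k \subseteq \bk(\cM)$ and $\bk(\cM) \subseteq k$. The first is a linear-algebra consequence of the $\cM$-parallel hypothesis, while the second is the substantive assertion and uses the Cylinder Deformation Theorem essentially.

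For $k \subseteq \bk(\cM)$: rotate so $\cC$ is horizontal and write $V = T^M_\bR(\cM)$. Since $C_1,\ldots,C_r$ are $\cM$-parallel, the core curves $\alpha_1,\ldots,\alpha_r$ are $\cM$-collinear; evaluating at $\Phi(M)$ gives $w_j := \alpha_j - (c_j/c_1)\alpha_1 \in V^\perp$ for $j \geq 2$. If $\bk(\cM) = k'$, then $V^\perp = V^\perp_{k'} \otimes_{k'} \bR$ with $V^\perp_{k'} := V^\perp \cap H_1(M,\Sigma;k')$. Suppose for contradiction $c_j/c_1 \notin k'$; extend $\{1, c_j/c_1\}$ to a Hamel basis of $\bR$ over $k'$, and let $p : \bR \to k'$ be the $k'$-linear projection onto the $c_j/c_1$-component, so that $p(k') = 0$ and $p(c_j/c_1) = 1$. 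Applied coordinate-wise, $p^m$ is $k'$-linear and sends $V^\perp$ into $V^\perp_{k'} \subseteq V^\perp$. Choosing a $\bZ$-basis of $H_1(M,\Sigma;\bZ)$ containing $\alpha_1$, the only non-integer entry of $w_j$ is $-c_j/c_1$ at the $\alpha_1$-slot; hence $p^m(w_j) = -\alpha_1 \in V^\perp$. This contradicts $\alpha_1(\Re \Phi(M)) = c_1 \neq 0$, noting that $\Re \Phi(M) \in V$ by the $\bR^*$-scaling action.

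For $\bk(\cM) \subseteq k$: normalize $c_1 = 1$ so each $c_j \in k$; then $\alpha_j - c_j \alpha_1 \in V^\perp \cap H_1(M,\Sigma;k)$ already furnishes $r-1$ $k$-rational relations. The aim is to adjoin further $k$-rational relations whose $\bR$-span is all of $V^\perp$. The Cylinder Deformation Theorem (Theorem~\ref{T:main}) contributes $\eta_\cC = \sum_i h_i I_{\alpha_i} \in V$, and combining with Smillie--Weiss (Theorem~\ref{T:SW}) allows passage to horizontally periodic surfaces $M'$ in the $u_t$-orbit closure of $M$. Each $\cM$-parallel equivalence class of horizontal cylinders on $M'$ yields its own $\cM$-collinearity relations; by the argument of the first inclusion, their circumference ratios lie in $\bk(\cM)$, and the task is to organize all these relations into a system that spans $V^\perp$ over $\bR$ using only coefficients in $k$.

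The hardest step is this second inclusion. The heights $h_i$ in $\eta_\cC$ are real numbers generally outside $k$, so $\eta_\cC$ itself is not a $k$-rational element of $V$, and it is not immediate how to extract $k$-rational generators of $V^\perp$ from the Cylinder Deformation Theorem. Moreover, the statement claims that a \emph{single} equivalence class determines the field of definition, so one must rule out circumference ratios outside $k$ appearing in any other $\cM$-parallel class on any surface in $\cM$. Overcoming this requires using the Cylinder Deformation Theorem and Smillie--Weiss together to transfer the $k$-rational structure from $\cC$ to all of $V^\perp$, exploiting the rigidity of $\cM$-collinearity in an essential way.
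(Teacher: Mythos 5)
Your argument for the containment $\bQ[c_2/c_1,\ldots,c_r/c_1] \subseteq \bk(\cM)$ is correct and amounts to the same content as the paper's linear-algebra lemma, though you reach it by a Hamel-basis projection rather than by projecting $V$ to a coordinate plane and observing that the slope $c_j/c_1$ must lie in the field of definition.

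The genuine gap is the reverse containment $\bk(\cM) \subseteq k := \bQ[c_2/c_1,\ldots,c_r/c_1]$, which you flag as hard but do not prove. You correctly observe that $\eta_\cC = \sum_i h_i I_{\alpha_i}$ is not $k$-rational because the heights $h_i$ are generally outside $k$; but you then leave it at the hope that collinearity relations coming from $\cM$-parallel classes on various surfaces in $\cM$ will span $V^\perp$ with $k$-coefficients, without an argument that this happens and without any way to control the circumference ratios of those other classes (which would be circular, since bounding those ratios by $k$ is essentially what the theorem asserts). The paper sidesteps the heights issue by not using $\eta_\cC$ itself: Corollary \ref{C:closure} lets one choose \emph{rational} $t_1,\ldots,t_r$ satisfying the rational linear relations that the moduli $m_i$ satisfy, producing $\eta = \sum_i t_i c_i I_{\alpha_i} \in T^M(\cM)$ whose rescaling by $1/c_1$ is $k$-rational and has $p(\eta) \neq 0$ (Lemma \ref{L:eta}). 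The step you are missing entirely is the Simplicity Theorem from \cite{W3}: the only proper flat subbundles of $T(\cM)$ lie in $\ker(p)$. Since $p(\eta)\neq 0$ and the monodromy of $H^1_{rel}$ is integral, the monodromy orbit of the $k$-rational vector $\eta$ spans $T^M(\cM)$ by $k$-rational vectors, giving $\bk(\cM)\subseteq k$. Without this global rigidity input, a single equivalence class $\cC$ has no reason to control all of $V^\perp$, and your outline cannot be completed as stated.
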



First we prove the easier direction in Theorem \ref{T:kM2}. 

\begin{lem}
Let $V\subset \bC^m$ be a linear subspace. Let $e_i^*$ be the coordinate functions, $i=1, \ldots, m$. If the equation $e_i^*-c e_j^*=0$ holds on $V$ for some $i\neq j$ and $c\in \bC$, but the equation $e_i^*=0$ does not hold on $V$, then the field of definition of $V$ contains $\bQ[c]$. 
\end{lem}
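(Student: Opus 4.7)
The plan is first to verify that $c$ is automatically real, and then to run an intersection argument to show $c$ lies in the field of definition $k = \bk(V)$.

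For the first step, since $\bk(V)$ is by definition a subfield of $\bR$, the subspace $V$ is the complexification of its real part $V_\bR \subset \bR^m$. Restricting the equation $e_i^* = c e_j^*$ to $V_\bR$ gives an identity between real-valued functions. If $e_j^*$ vanished on $V_\bR$, so would $e_i^* = c e_j^*$, contradicting the hypothesis that $e_i^*$ does not vanish on $V$. Picking $v \in V_\bR$ with $e_j^*(v) \neq 0$ gives $c = e_i^*(v)/e_j^*(v) \in \bR$.

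For the main step I would work in the real dual $(\bR^m)^*$ and consider the annihilator $V_\bR^\perp$, which is $k$-rational by the very definition of the field of definition, together with the $\bQ$-rational subspace $W = \span_\bR(e_i^*, e_j^*)$. The intersection of two $k$-rational subspaces is again $k$-rational (dualize the easy statement that a sum of $k$-rational subspaces is $k$-rational), so $V_\bR^\perp \cap W$ is $k$-rational. This intersection is exactly one-dimensional: it contains the nonzero functional $e_i^* - c e_j^*$ by hypothesis, but it cannot equal all of $W$ since $e_i^* \notin V_\bR^\perp$.

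To finish, a one-dimensional $k$-rational subspace of $(\bR^m)^*$ must contain a nonzero vector with coordinates in $k$. Since our intersection is spanned by $e_i^* - c e_j^*$, any such vector has the form $\lambda(e_i^* - c e_j^*)$ with $\lambda \in \bR^*$, and the condition that both its $i$-th and $j$-th coordinates lie in $k$ forces $\lambda \in k$ and $\lambda c \in k$, whence $c = (\lambda c)/\lambda \in k$ and $\bQ[c] \subset \bk(V)$. I do not expect any serious obstacle: the whole proof is a short piece of linear algebra, and the only place the hypothesis $e_i^* \not\equiv 0$ on $V$ is used is to rule out the two-dimensional case for the intersection, which is precisely what allows the coefficient $c$ to be extracted from the unique $k$-rational line.
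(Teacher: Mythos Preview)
Your argument is correct, with one caveat: in this lemma the paper explicitly takes the field of definition to be the smallest subfield of $\bC$ (not $\bR$) over which $V$ can be cut out, so your first step forcing $c\in\bR$ rests on an assumption not present in the statement. Fortunately that step is also unnecessary---your annihilator--intersection argument works verbatim over $\bC$ once you replace $V_\bR$ by $V$ throughout.

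Your route is the dual of the paper's. The paper projects $V$ onto the $(i,j)$ coordinate plane, observes that the image is the line of slope $c$ (the hypothesis $e_i^*\not\equiv 0$ on $V$ rules out the image being $0$ or all of $\bC^2$), and uses that the coordinate projection of a $k$-defined subspace is again $k$-defined. You instead pass to $V^\perp$ and intersect with $W=\span(e_i^*,e_j^*)$; but $V^\perp\cap W$ is precisely the annihilator of that projection, so both arguments land on the same one-dimensional $k$-rational line from which $c$ is read off. The paper's phrasing is slightly shorter because preservation of $k$-definability under coordinate projection is immediate, whereas you route it through duality; the content is the same.
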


In this context, the field of definition of $V$ is the smallest subfield of $\bC$ with the property that $V$ may be described by linear equations in the $e_i^*$ with coefficients in this field. 

\begin{proof}[\eb{Proof of Lemma.}]
The projection of $V$ to the $i,j$ coordinate plane is a line of slope $c$. The field of definition of this line is $\bQ[c]$. If $V$ can be defined by linear equations with coefficients in some field, then so can its projection to any coordinate hyperplane. So it follows that the field of definition of $V$ contains $\bQ[c]$. 
\end{proof}

\begin{proof}[\eb{Proof that $k(\cM)\supset \bQ[c_2/c_1, \ldots, c_r/c_1]$.}]
Let $\cC=\{C_1, \ldots, C_r\}$, and let the core curve of the cylinder $C_i$ be denoted $\gamma_i$. 

Pick a basis of $H_1(M, \Sigma;\bZ)$, where $\Sigma\subset M$ is the set of cone points of $M$, so that this basis includes as many core curves of cylinders in $\cC$ is possible. Without loss of generality, assume that $\gamma_1, \ldots, \gamma_p$ are part of the basis. 

Note that $\bQ[c_2/c_1, \ldots, c_r/c_1]= \bQ[c_2/c_1, \ldots, c_p/c_1]$. Indeed, $\gamma_i$ for $i>p$ is in the $\bZ$--span of $\gamma_1, \ldots, \gamma_p$, so $c_i$ for $i>p$ is an integral linear combination of $c_1, \ldots, c_p$. 

Assume the rest of the basis is $\alpha_1, \ldots, \alpha_w$. For each $\gamma \in H_1(M,\Sigma;\bZ)$ we set $$\gamma^*(X,\omega)=\int_\gamma \omega,$$ so that $$(\gamma_1^*, \ldots, \gamma_p^*, \alpha_1^*, \ldots, \alpha_w^*)$$ give local coordinates for $\cM$ near $M$. The field of definition of $\cM$ is the smallest subfield of $\bR$ so that $\cM$ is defined in these local coordinates by linear equations with coefficients in this subfield. 

The linear equations $\gamma_i^*-c_i/c_1 \gamma_1^*=0$ hold on $\cM$ in these local coordinates. It follows from the previous lemma that $k(\cM)\supset \bQ[c_2/c_1, \ldots, c_p/c_1]$.
\end{proof}

To proceed, we require the following technical lemma. 

\begin{lem}\label{L:eta}
Suppose that $M\in \cM$ is a translation surface, and suppose $\cC$ is an equivalence class of $\cM$--parallel horizontal cylinders of circumferences $c_1, \ldots, c_r$. There is a tangent vector $\eta\in T^M(\cM)$ which is defined over $\bQ[c_2/c_1, \ldots, c_r/c_1]$ and for which $p(\eta)\neq 0$. 
\end{lem}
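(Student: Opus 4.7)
My starting point is the cylinder shear vector $\eta_\cC = \sum_{i=1}^r h_i I_{\alpha_i} \in T^M(\cM)$, which is provided by the Cylinder Deformation Theorem (Theorem \ref{T:main}) and the machinery of Section \ref{S:RD}: although Lemma \ref{L:key2} is stated for horizontally periodic $M$, essentially the same Smillie--Weiss accumulation argument used in the proof of Theorem \ref{T:main} shows that $\eta_\cC \in T^M(\cM)$ whenever $\cC$ is an equivalence class of $\cM$--parallel horizontal cylinders. The natural candidate is then $\eta := (1/h_1)\eta_\cC = \sum_{i=1}^r (h_i/h_1) I_{\alpha_i}$, which lies in $T^M(\cM)$ by real scaling. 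The nonvanishing $p(\eta) \neq 0$ is easy: pair with a cycle $\gamma$ transversal to $C_1$ and disjoint from $C_2, \ldots, C_r$. Since $I_{\alpha_i}(\gamma) = \delta_{i,1}$, we obtain $\eta(\gamma) = 1$, and as $\gamma$ is an absolute class, $p(\eta)(\gamma) = 1$. The case $r=1$ is then immediate: $\eta = I_{\alpha_1}$ is an integer cohomology class, defined over $\bQ$, with $p(I_{\alpha_1}) \neq 0$ since $\alpha_1 \neq 0$ in $H_1(M; \bZ)$ (it has period $c_1 \neq 0$).

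For $r > 1$, everything therefore reduces to showing that $h_i/h_1 \in \bk := \bQ[c_2/c_1, \ldots, c_r/c_1]$. The plan is to exploit the $\cM$--parallel rigidity via a horizontally periodic witness. Apply Theorem \ref{T:SW} to find a horizontally periodic $M^\dagger$ in the $u_t$--orbit closure of $M$; the cylinders of $\cC$ persist at $M^\dagger$ with unchanged heights by Lemma \ref{L:persist}, and the $\cM$--parallel equivalence class is preserved by flatness of $T(\cM)$ along $\cM$. By a real deformation of the type considered in the unnamed lemma preceding Lemma \ref{L:key2}, arrange that the moduli of the cylinders in $\cC$ are $\bQ$--independent from those of the remaining horizontal cylinders on $M^\dagger$. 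Then Corollary \ref{C:closure} applied to the $u_t^\cC$--orbit of $M^\dagger$ constrains the direction $(h_1, \ldots, h_r)$ in $\bR^r$ to lie along the $\bQ$--linear relations satisfied by the moduli $(m_1, \ldots, m_r)$ with $m_i = h_i/c_i$. Combined with the $\cM$--parallel identity $c_i \alpha_1 - c_1 \alpha_i \in T^M(\cM)^\perp$ (which forces the $c_i/c_1$ to be $\cM$--invariants lying in $\bk$), the identity $h_i/h_1 = (m_i/m_1)(c_i/c_1)$ should yield the desired $h_i/h_1 \in \bk$.

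I expect the main obstacle to be this final translation, from purely $\bQ$--rational relations among $(m_i)$ to $\bk$--rational relations among $(h_i/h_1)$. The subtlety is that Corollary \ref{C:closure} only sees $\bQ$--rational linear relations, whereas the $\cM$--parallel constraint lives over $\bk$; reconciling these likely requires a Lemma \ref{L:Findep}--style independence argument applied inside $\cC$, together with the Avila--Eskin--M\"oller symplecticity of $p(T(\cM))$ used to dualize between the absolute classes $\alpha_i$ (whose $c_i/c_1$--ratios are $\bk$--rational) and the relative classes $I_{\alpha_i}$ (whose $h_i$--coefficients we wish to constrain). A cleaner alternative formulation may be to work directly with the manifestly $\bk$--rational candidate $\sum (c_i/c_1) I_{\alpha_i}$, correcting it by a suitable relative part to land inside $T^M(\cM)$; but verifying that such a correction exists seems to require essentially the same input.
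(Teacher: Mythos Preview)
Your approach has a genuine gap: the claim that $h_i/h_1 \in \bk := \bQ[c_2/c_1, \ldots, c_r/c_1]$ is simply false in general, so no amount of additional machinery will rescue the candidate $\eta = (1/h_1)\eta_\cC$. For a concrete obstruction, take any $M$ with two $\cM$--parallel horizontal cylinders of equal circumference (so $\bk=\bQ$) but with heights whose ratio is irrational; such examples exist already among eigenform loci in genus~2. The cylinder stretch $a_t^\cC$ changes all heights by the same factor, but real deformations within $\cM$ (Lemma~\ref{L:persist}) can change the ratios $h_i/h_1$ continuously, so there is no rigidity forcing these ratios into $\bk$. Your second paragraph misreads Corollary~\ref{C:closure}: that corollary does not constrain $(h_1,\ldots,h_r)$; it tells you which \emph{deformations} lie in the $u_t^\cC$--orbit closure.

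The paper's argument avoids this entirely by not trying to realize $\eta_\cC$ itself over $\bk$. Instead, Corollary~\ref{C:closure} gives a whole family of tangent vectors in $T^M(\cM)$: for any $(t_1,\ldots,t_r)$ satisfying the $\bQ$--linear relations that the moduli $(m_1,\ldots,m_r)$ satisfy, differentiating $u^\cC_{tt_1/m_1,\ldots,tt_r/m_r}(M)$ yields
\[
\eta \;=\; \sum_{i=1}^r \frac{t_i}{m_i}\,h_i\, I_{\alpha_i} \;=\; \sum_{i=1}^r t_i c_i\, I_{\alpha_i} \;\in\; T^M(\cM).
\]
Since the constraints on the $t_i$ form a rational linear system with a nonzero real solution (namely $t_i=m_i$), it has a nonzero rational solution; scaling by $1/c_1$ then gives coefficients $t_i(c_i/c_1)\in\bk$. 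This is essentially the ``cleaner alternative'' you gesture at in your final sentence, but the point is that the rational weights $t_i$ are exactly what guarantee membership in $T^M(\cM)$, with no further correction needed. (A minor additional issue: your argument for $p(\eta)\neq 0$ assumes an \emph{absolute} cycle $\gamma$ meeting $C_1$ once and avoiding $C_2,\ldots,C_r$, which need not exist when cylinders share boundary components.)
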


When we say that $\eta$ is \emph{defined over} some field, we mean that the real and imaginary parts of the periods of $\eta$ over any integral relative homology class are contained in this field. 

\begin{proof}
Let the cylinders in $\cC$ be $C_1,\ldots, C_r$. Let the height, circumference and modulus of $C_i$ be $h_i$, $c_i$ and $m_i=h_i/c_i$ respectively. 
By Corollary \ref{C:closure}, if $t_1, \ldots, t_r$ satisfy all the rational linear relations which the $m_i$ do, then $u_{tt_1/m_1, \ldots, tt_r/m_r}^\cC(M)$ remains in $\cM$ for all $t\in \bR$. Differentiating this at $t=0$, we find that $\eta\in T^M(\cM)$, where
$$\eta = \sum_{i=1}^r \frac{t_i}{m_i} h_i I_{\alpha_i} = \sum_{i=1}^r t_ic_i I_{\alpha_i}.$$
Compare to Lemma \ref{L:i}.

The restrictions on the $t_i$ are a set of rational linear equations which has nonzero solutions. Therefore this system has a nonzero solution with all $t_i\in \bQ$. Using a scaling of this solution $t_i'=t_i/c_1$ gives the result. 
\end{proof}

We also require

\begin{thm}[The Simplicity Theorem, Thm. 5.1 in \cite{W3}]
The only proper flat subbundles of $T(\cM)$ are contained in $\ker(p)$. 
\end{thm}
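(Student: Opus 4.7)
The plan is to prove the contrapositive: if $W \subset T(\cM)$ is a flat subbundle with $p(W) \neq 0$, then $W = T(\cM)$. The key inputs are the Cylinder Deformation Theorem, the $GL(2,\bR)$-equivariance of the Gauss-Manin connection, and the symplectic structure on $p(T(\cM))$ established by Avila-Eskin-M\"oller.

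Since the Gauss-Manin connection is natural under the projection $p$, the image $p(W)$ is a flat subbundle of $p(T(\cM))$, and the statement splits into two claims: first, that $p(W) = p(T(\cM))$; and second, that $p(W) = p(T(\cM))$ forces $W$ to contain $\ker(p)\cap T(\cM)$. For the first claim, I would use Smillie-Weiss (Theorem \ref{T:SW}) to find a horizontally periodic $M \in \cM$, and Theorem \ref{T:manyC} to produce $\frac12\dim_\bC p(T(\cM))$ cylinders whose core curves span $p(T^M(\cM))^*$ as linear functionals. The cylinder-deformation vectors $\eta_\cC$ provided by the Cylinder Deformation Theorem then yield, via Lemma \ref{L:i}, absolute classes $p(\eta_\cC) = \sum h_i [\alpha_i]$ in $T^M(\cM)$; symplectically pairing these against any hypothetical $\beta \in p(W^M)^\perp \cap p(T^M(\cM))$ forces $\beta = 0$ after one rotates $M$ through sufficiently many directions admitting cylinders.

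For the second claim, I would leverage the fact that the $\eta_\cC$ produced by the Cylinder Deformation Theorem are genuine relative cohomology classes. By flatness of $W$, parallel transporting $\eta_\cC$ along real deformations of $M$ (as developed in Section \ref{S:RD}) changes the relative component of $\eta_\cC$ while preserving its image under $p$, since heights of cylinders are constant along real deformations (Lemma \ref{L:persist}). Differences of parallel transports of $\eta_\cC$ at nearby real deformations then lie in $\ker(p) \cap W$; by varying $M$ within $\cM$ and using that the relative periods move freely in the kernel of $p$ restricted to $T(\cM)$, one exhausts all of $\ker(p) \cap T(\cM)$.

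The main obstacle is the first claim: ruling out proper symplectic monodromy-invariant subspaces of $p(T(\cM))$. The AEM symplectic structure alone gives semisimplicity but not irreducibility, and one cannot a priori exclude nontrivial symplectic summands. Overcoming this will require a genuinely dynamical input, combining the ergodicity of the Teichm\"uller flow and results on the Kontsevich-Zorich cocycle (Forni's work on the absence of unexpected deterministic subbundles) with the abundance of cylinder-deformation vectors guaranteed by the Cylinder Deformation Theorem in every direction realized by an $SL(2,\bR)$-rotate of $M$. The precise technical step is to show that no proper monodromy-invariant symplectic subspace can simultaneously be orthogonal to all the $p(\eta_\cC)$ coming from every direction with a cylinder on every surface in $\cM$, which is where both the richness of the $GL(2,\bR)$-orbit and the flat-geometric content of cylinder deformations interact essentially.
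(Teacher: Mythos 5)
First, note that this theorem is not proved in the present paper: it is cited from \cite[Thm.~5.1]{W3} and used as a black box in Section~\ref{S:field}, so there is no internal proof here to compare your proposal against. I therefore evaluate the proposal on its own merits.

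Your argument for the first claim is circular. You propose to pair $p(\eta_\cC)$ symplectically against a hypothetical $\beta \in p(W)^\perp$, but the Cylinder Deformation Theorem only places $\eta_\cC$ in $T^M(\cM)$, not in $W^M$, so there is no reason this pairing should vanish and hence no contradiction to extract; conversely, if you somehow knew $p(\eta_\cC) \in p(W)$, the pairing would vanish automatically and tell you nothing about $\beta$. To actually force $\beta = 0$ you would need the classes $p(\eta_\cC)$, over all directions and all surfaces in $\cM$, to generate $p(T(\cM))$ as a flat bundle --- but that is precisely the irreducibility you set out to prove, and neither Theorem~\ref{T:manyC} nor the Avila--Eskin--M\"oller symplecticity supplies it; appealing to Forni or Kontsevich--Zorich rigidity does not close this without an argument that engages the monodromy directly. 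Your second claim fails outright: along a real deformation the heights $h_i$ are constant (Lemma~\ref{L:persist}) and the classes $I_{\alpha_i}$ are integral, hence flat under Gauss--Manin, so by Lemma~\ref{L:i} the section $\eta_\cC = \sum h_i I_{\alpha_i}$ is itself \emph{parallel} along any path of real deformations. The ``differences of parallel transports of $\eta_\cC$'' you invoke are therefore identically zero; they do not produce nonzero vectors in $\ker(p) \cap W$, and the claim that ``one exhausts all of $\ker(p) \cap T(\cM)$'' has no support. Both halves of the proposal thus have genuine gaps.
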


\begin{proof}[\eb{Proof of Theorem \ref{T:kM2}.}]
It remains only to show that $\bk(\cM)$ is contained in $\bQ[c_2/c_1, \ldots, c_r/c_1]$. Let $\eta\in T^M(\cM)$ be given by Lemma \ref{L:eta}. Since $p(\eta)\neq 0$, the Simplicity Theorem gives that the orbit of $\eta$ under the monodromy representation of $H^1_{rel}$ spans $T^M(\cM)$. Since the monodromy of $H^1_{rel}$ is integral, this gives that $T^M(\cM)$ is spanned by vectors defined over $\bQ[c_2/c_1, \ldots, c_r/c_1]$. Hence $T^M(\cM)$ is defined over $\bQ[c_2/c_1, \ldots, c_r/c_1]$, that is, $\bk(\cM)\subset \bQ[c_2/c_1, \ldots, c_r/c_1]$. (Similar arguments are explained in more detail in \cite{W3}.) 
\end{proof}

\begin{proof}[\eb{Proof of Theorem \ref{T:kM1}.}]
Without loss of generality, assume that the horizontal cylinders $C_1, \ldots, C_r$ form an equivalence class of $\cM$--parallel cylinders, where $r\leq n$. By Theorem \ref{T:kM2}, 
$$\bk(\cM)= \bQ[c_2/c_1, \ldots, c_r/c_1]\subset \bQ[c_2/c_1, \ldots, c_n/c_1].$$
\end{proof}

\section{Finding many cylinders}\label{S:rank}

This section is devoted to proving Theorem \ref{T:manyC}. The proof will consider the \emph{twist space} and the \emph{cylinder preserving space} of a horizontally periodic translation surface in some affine invariant submanifold. 

\begin{defn}
Let $M\in \cM$ be horizontally periodic. The \emph{twist space} of $\cM$ at $M$ is the subspace $\Tw(M, \cM)$ of $T_\bR^M(\cM)$ of cohomology classes in $T_\bR^M(\cM)$ which are zero on all horizontal saddle connections.
\end{defn}

\begin{lem} 
Let $\cH$ be a stratum, and let $M\in \cH$ be horizontally periodic. Let $C_1, \ldots, C_n$ be the horizontal cylinders on $M$. Then the twist space of $\cH$ at $M$ is 
$$\Tw(M,\cM) = span_\bR(\eta_{C_i})_{i=1}^n.$$
\end{lem}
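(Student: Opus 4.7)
The plan is a direct linear-algebra argument based on an explicit generating set of $H_1(M,\Sigma;\bZ)$ adapted to the horizontal cylinder decomposition. First I would verify the easy inclusion $\span_\bR(\eta_{C_i})_{i=1}^n \subseteq \Tw(M,\cH)$. By Lemma \ref{L:i}, each $\eta_{C_i} = h_i I_{\alpha_i}$ takes real values on integral relative cycles, so it lies in $T^M_\bR(\cH) = H^1(M,\Sigma;\bR)$; and since every horizontal saddle connection lies on the horizontal boundary of some cylinder and can therefore be realized disjointly from the interior of each $C_i$, we have $I_{\alpha_i}(s) = 0$ for every horizontal saddle connection $s$. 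Thus each $\eta_{C_i}$, and hence any real linear combination of them, vanishes on all horizontal saddle connections.

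For the reverse containment, I would introduce, for each $i=1,\ldots,n$, a cross curve $\beta_i$: a relative $1$-cycle starting at a singularity on the bottom edge of $C_i$ and going straight vertically across $C_i$ to a singularity on its top edge. The key structural claim is that the horizontal saddle connections $\{s_j\}$ together with $\{\beta_1,\ldots,\beta_n\}$ generate $H_1(M,\Sigma;\bZ)$. This follows from the polygonal model of a horizontally periodic surface: cutting each cylinder $C_i$ open along $\beta_i$ presents $M$ as a disjoint union of rectangles with vertices in $\Sigma$, whose boundary edges, after the horizontal and vertical gluings, are precisely the given saddle connections and cross curves, so any integral relative cycle can be written as an integer combination of these.

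Granting this, the rest is a direct computation. Since $\beta_j$ is vertical and lies only in $C_j$, Lemma \ref{L:i} gives $\eta_{C_i}(\beta_j) = h_i\,\delta_{ij}$, together with $\eta_{C_i}(s_j) = 0$ for every horizontal saddle connection $s_j$. Given $\xi \in \Tw(M,\cH)$, set $c_i := \xi(\beta_i)/h_i \in \bR$ and consider $\xi' := \xi - \sum_{i=1}^n c_i\,\eta_{C_i}$. By construction $\xi'(\beta_j) = 0$ for every $j$, and $\xi'(s_j) = 0$ for every horizontal saddle connection $s_j$ because both $\xi$ (by definition of $\Tw$) and each $\eta_{C_i}$ vanish on such classes. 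Since $\{s_j\} \cup \{\beta_1,\ldots,\beta_n\}$ generates $H_1(M,\Sigma;\bZ)$, we conclude $\xi' = 0$ and thus $\xi \in \span_\bR(\eta_{C_i})_{i=1}^n$.

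The main obstacle is the generation claim for $H_1(M,\Sigma;\bZ)$. It is conceptually transparent from the cell structure furnished by the cylinder decomposition, but deserves an explicit verification to avoid confusion regarding which horizontal saddle connections appear on which cylinder boundary and how the orientations interact with the edge identifications; once this cellular presentation is in hand, the remainder of the argument is purely formal linear algebra, and the linear independence of $\eta_{C_1},\ldots,\eta_{C_n}$ implicit in the pairing $\eta_{C_i}(\beta_j) = h_i\,\delta_{ij}$ shows moreover that the sum on the right-hand side is a direct sum.
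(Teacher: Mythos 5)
Your proof is correct and essentially the same as the paper's: both directions are handled by introducing one cross curve per cylinder, invoking Lemma~\ref{L:i} for the pairing $\eta_{C_i}(\beta_j)=h_i\delta_{ij}$, and using that cross curves together with horizontal saddle connections span $H_1(M,\Sigma;\bZ)$. The spanning claim you flag as deserving verification is likewise stated without further justification in the paper (it follows from the cell structure of a horizontally periodic surface, exactly as you sketch), so the two arguments are in complete agreement.
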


\begin{proof}
Classes in $\span_\bR(\eta_{C_i})_{i=1}^n$ are zero on horizontal saddle connections, and hence 
$$\Tw(M,\cM) \supset \span_\bR(\eta_{C_i})_{i=1}^n.$$

Pick one ``cross curve" for each horizontal cylinder--a saddle connection from a zero on the bottom of the cylinder to a zero on the top. These cross curves, together with the horizontal saddle connections, span relative homology.

Consider a class $\eta \in \Tw(M,\cM).$ By definition, it is zero on all horizontal saddle connections. Suppose that the value of $\eta$ on the cross curve for $C_i$ is $b_i$. Then we will see that 
$$\eta=\sum_{i=1}^n \frac{b_i}{h_i} \eta_{C_i},$$
where $h_i$ is the height of $C_i$. Indeed, both the left hand side and the right hand side are zero on horizontal saddle connections, and by Lemma \ref{L:i}, the right hand side takes value $b_i$ on any cross curve for $C_i$. Since the right hand side and the left hand side agree on a basis for relative homology, they are equal. 
\end{proof}

\begin{cor}\label{C:tw}
Let $M\in \cM$ be horizontally periodic, with horizontal cylinders $C_1, \ldots, C_n$. Then 
$$\Tw(M, \cM)= T_\bR^M(\cM) \cap \span_\bR(\eta_{C_i})_{i=1}^n.$$
\end{cor}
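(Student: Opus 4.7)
The plan is to deduce this immediately from the previous lemma by intersecting both sides of that identity with the real tangent space $T_\bR^M(\cM)$. Unfolding the definition of the twist space gives
$$\Tw(M,\cM) = \{\eta \in T_\bR^M(\cM) : \eta \text{ vanishes on every horizontal saddle connection on } M\}.$$
Since $\cM$ is cut out in period coordinates by homogeneous linear equations with real coefficients, $T_\bR^M(\cM)$ is a subspace of $H^1(M,\Sigma;\bR) = T_\bR^M(\cH)$, where $\cH$ denotes the ambient stratum. This lets us rewrite the displayed equation as
$$\Tw(M,\cM) = T_\bR^M(\cM) \cap \Tw(M,\cH).$$

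Now the previous lemma, applied with the ambient stratum $\cH$ in place of $\cM$, identifies $\Tw(M,\cH) = \span_\bR(\eta_{C_i})_{i=1}^n$. Substituting this into the displayed equation yields exactly the claim. No further work is required: the substantive content of the statement --- namely, that a class in $H^1(M,\Sigma;\bR)$ which vanishes on every horizontal saddle connection must be an $\bR$-linear combination of the shears $\eta_{C_i}$ --- was already established in the preceding lemma by completing the horizontal saddle connections to a basis of relative homology via one cross curve per cylinder and then matching coefficients using Lemma \ref{L:i}. Consequently I do not anticipate any obstacle here; this corollary is essentially the observation that the twist space construction is compatible with passage from the stratum down to the affine invariant submanifold $\cM$, and the only thing to verify is that the defining condition (vanishing on horizontal saddle connections) is intrinsic and does not interact with the linear conditions cutting out $\cM$.
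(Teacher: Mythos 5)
Your argument is correct and is essentially identical to the paper's own proof: both unfold the definition of $\Tw(M,\cM)$ as those classes in $T_\bR^M(\cM)$ vanishing on horizontal saddle connections, observe that the preceding lemma (applied with $\cH$ in place of $\cM$) identifies the full space of such classes with $\span_\bR(\eta_{C_i})_{i=1}^n$, and intersect with $T_\bR^M(\cM)$. No gap.
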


\begin{proof}
By definition, $\Tw(M, \cM)$ is the set of real cohomology classes on $M$ which are in $T_\bR^M(\cM)$ and which are zero on all horizontal saddle connections. By the previous lemma, the space of real cohomology classes on $M$ which are zero on all horizontal saddle connections is exactly $\span_\bR(\eta_{C_i})_{i=1}^n.$
\end{proof}

\begin{defn}
Let $M\in \cM$ be horizontally periodic. The \emph{cylinder preserving space} of $\cM$ at $M$ is the subspace $\CP(M, \cM)$ of $T_\bR^M$ of cohomology classes which are zero on the core curves of all horizontal cylinders.
\end{defn}

Note that $\Tw(M,\cM)\subset \CP(M,\cM)$. 

\begin{lem}\label{L:preserve}
Suppose $\eta\in \CP(M,\cM)\otimes \bC$ is in the complexification of the cylinder preserving space of $\cM$ at $M$, and consider the deformation $M_t$ defined by $\Phi(M_t)=\Phi(M)+t\eta$. For $t$ sufficiently small, the horizontal cylinders on $M$ persist on $M_t$.  
\end{lem}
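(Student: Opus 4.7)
The plan is to use the defining vanishing property of $\CP(M,\cM)\otimes\bC$ on core curves of horizontal cylinders (which preserves their horizontal holonomy on $M_t$) and then extract a persisting cylinder by a direct developing-map argument.

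For each horizontal cylinder $C_i$ of $M$ with core curve $\alpha_i$, the hypothesis gives $\eta(\alpha_i)=0$, hence
\[
\Phi(M_t)(\alpha_i)=\Phi(M)(\alpha_i)+t\eta(\alpha_i)=\Phi(M)(\alpha_i)=(c_i,0).
\]
On $M$, the cylinder $C_i$ develops in $\bR^2$ to the horizontal strip $\{-h_i^- < y < h_i^+\}$ quotiented by the translation $(c_i,0)$, bounded above and below by horizontal polygonal paths assembled from its boundary saddle connections $\beta_j$. Using the marking conventions from the remark after Lemma \ref{L:persist}, for $t$ small the same topological annulus develops on $M_t$ to a region with the same deck translation $(c_i,0)$, but with upper and lower boundary polygonal paths now assembled from the deformed saddle connections with holonomies $\Phi(M)(\beta_j)+t\eta(\beta_j)$.

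The main obstacle is then to extract a horizontal sub-strip inside this deformed region; such a sub-strip will descend to the desired horizontal cylinder $C_i^t\subset M_t$. Each perturbed boundary saddle connection differs from its original by an $O(t)$ vector, so as one traverses the upper boundary the cumulative displacement from the horizontal line $y=h_i^+$ is bounded by $O(t)$ and returns to zero after one full loop, since $\sum_j\eta(\beta_j)=\eta(\alpha_i)=0$. Thus for $t$ small, the upper and lower polygonal boundary paths stay within $O(t)$-Hausdorff-distance of the horizontal lines $y=h_i^+$ and $y=-h_i^-$ respectively, so the horizontal strip $\{y_{\max}^- < y < y_{\min}^+\}$ (where $y_{\min}^+$ and $y_{\max}^-$ denote the extremal $y$-values on the upper and lower polygonal paths) has positive height $h_i-O(t)$ and sits inside the deformed region. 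Quotienting by $(c_i,0)$ yields a horizontal cylinder $C_i^t$ on $M_t$ with core curve $\alpha_i$, whose boundary passes through the extremal vertices---singularities of $M_t$---so $C_i^t$ is maximal. Distinct cylinders $C_i$ give distinct $C_i^t$'s because they develop to disjoint parts of the universal cover, and this separation persists under small perturbation.
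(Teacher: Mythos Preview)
Your argument is correct, but it is considerably more elaborate than the paper's. The paper's proof is a single sentence: since $\eta$ vanishes on each core curve $\alpha_i$, the holonomy of $\alpha_i$ on $M_t$ remains real, so the cylinder (which persists under any small deformation, by the standard marking discussion in the remark preceding Lemma~\ref{L:persist}) stays horizontal. In other words, the paper black-boxes cylinder persistence entirely and only verifies horizontality.

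Your approach instead gives a self-contained constructive proof of persistence in this setting, via the developing map: you rebuild the cylinder as an explicit horizontal sub-strip of the perturbed fundamental domain, using $\eta(\alpha_i)=0$ to keep the deck translation horizontal and to close up the boundary perturbations. This buys you independence from the general persistence principle and yields quantitative information (height $h_i - O(t)$, maximality, distinctness), at the cost of more work. The remarks about maximality and disjointness are not needed for the lemma as stated, but they do no harm.
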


$\CP(M,\cM)\otimes \bC$ is the subspace of $T^M(\cM)$ of complex cohomology classes which are zero on the core curves of all horizontal cylinders. 

\begin{proof}
Indeed, the core curve of each horizontal cylinder on $M$ continues to have real holonomy at $M_t$. 
\end{proof}

We now arrive at the key idea of this section, which will be used to prove Theorem \ref{T:manyC}. 

\begin{lem}\label{L:key}
Suppose $M\in \cM$ is horizontally periodic, and $$\Tw(M,\cM) \subsetneq \CP(M,\cM).$$ Then there is a horizontally periodic translation surface in $\cM$ with more horizontal cylinders than $M$. 
\end{lem}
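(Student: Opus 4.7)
Plan. The plan is to exploit $\Tw(M,\cM) \subsetneq \CP(M,\cM)$ by deforming $M$ in a direction $i\eta$ that preserves horizontal core curves but destroys horizontal periodicity, and then applying Theorem~\ref{T:SW} to find a horizontally periodic $M'' \in \cM$ with more horizontal cylinders than $M$.

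Choose $\eta \in \CP(M,\cM) \setminus \Tw(M,\cM)$: a real class in $T^M(\cM)$ vanishing on every horizontal core curve of $M$ but with $\eta(\sigma) \neq 0$ for some horizontal saddle connection $\sigma$. For small generic $\epsilon > 0$, set $M' = \Phi^{-1}(\Phi(M) + i\epsilon\eta) \in \cM$. By Lemma~\ref{L:preserve} applied to $i\eta$, the horizontal cores $\gamma_1,\ldots,\gamma_n$ of $M$ remain horizontal closed geodesics on $M'$; pairwise non-homotopy then places them in $n$ pairwise distinct horizontal cylinders $\tilde C_1,\ldots,\tilde C_n$ of $M'$ of circumferences $c_i$ and positive heights. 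Because $\eta(\sigma) \neq 0$, $\sigma$ acquires nonzero imaginary holonomy on $M'$ and is no longer horizontal, while genericity of $\epsilon$ ensures no other non-horizontal saddle connection of $M$ accidentally rotates into the horizontal direction on $M'$. Hence the horizontal separatrices from the endpoints of $\sigma$ generically fail to close up on $M'$, showing that the $\tilde C_i$ do not tile $M'$ and that $M'$ is not horizontally periodic.

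Now apply Theorem~\ref{T:SW}: the $u_s$--orbit closure of $M'$ contains a horizontally periodic surface $M'' \in \cM$. Along a convergent subsequence $u_{s_k}M' \to M''$, horocycle flow preserves each $\tilde C_i$ as a horizontal cylinder of unchanged circumference and height; positive heights prevent collapse and non-homotopy of the core curves (which is preserved in the limit) prevents merging, so the $\tilde C_i$ limit to $n$ pairwise distinct horizontal cylinders of $M''$. The total area they cover on $M''$ equals the area they cover on $M'$, which is strictly less than $\mathrm{area}(M'') = \mathrm{area}(M')$. Since $M''$ is horizontally periodic, it must contain additional horizontal cylinders filling the remainder, giving $M''$ strictly more than $n$ horizontal cylinders.

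The main obstacle is showing that $M'$ is genuinely not horizontally periodic---i.e., that the imaginary deformation in some direction of $\CP(M,\cM) \setminus \Tw(M,\cM)$ actually breaks the horizontal cylinder tiling rather than perturbing $M$ within the locus of horizontally periodic surfaces with the same combinatorial cylinder structure. The key is $\eta(\sigma) \neq 0$, which destroys $\sigma$ as a horizontal saddle connection on $M'$; the horizontal separatrices from its endpoints on $M'$ then generically fail to meet other singularities, producing the required area defect.
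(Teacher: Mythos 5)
Your overall structure matches the paper's: choose $\eta \in \CP(M,\cM) \setminus \Tw(M,\cM)$, deform by $i\eta$, observe that the horizontal cylinders persist but fail to cover the deformed surface, then apply Theorem~\ref{T:SW} and count cylinders in the resulting horizontally periodic limit. The final step (cylinders persist into the horocycle-orbit closure; since they do not cover the surface, the limit has strictly more cylinders) is essentially the same as the paper's, with your area bookkeeping just a slightly different phrasing.

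The gap is in the central step, showing that the persisted cylinders do not cover $M'$. You argue that a ``generic'' small $\epsilon$ avoids rotating other saddle connections into the horizontal direction, and that the horizontal separatrices emanating from the endpoints of $\sigma$ then ``generically fail to close up.'' Neither claim is established, and the second does not logically connect to the conclusion: even granting that no new horizontal saddle connections appear, you have not shown that the maximal horizontal cylinders $\tilde C_i$ around the persisted core curves fail to tile $M'$ — their boundary saddle connections could in principle be different from those of the $C_i$, and the fate of separatrices from two particular singularities says nothing directly about this. The paper's argument is shorter and closes the gap cleanly: take $\eta$ small enough that the new vertical part $|\eta(\sigma)|$ of $\sigma$ is strictly smaller than the heights of all of the $\tilde C_i$ on $M'$. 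A geodesic segment with vertical displacement that is nonzero but less than every cylinder height cannot lie on a horizontal boundary (it is not horizontal) and cannot cross any cylinder from bottom to top (its vertical displacement is too small), so it cannot be contained in the closure of $\bigcup \tilde C_i$; hence that union is a proper subset of $M'$. This is the content of the paper's sublemma preceding the lemma. No genericity of $\epsilon$ is needed, only smallness. To repair your proof, replace the separatrix heuristic with this comparison of $|\eta(\sigma)|$ against cylinder heights.
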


\begin{sublem}
Suppose $M\in \cM$ is horizontally periodic, and 
$$\eta \in \CP(M,\cM) \setminus \Tw(M,\cM)$$
is sufficient small. Consider the deformation $M'$ of $M$ defined by $\Phi(M')=\Phi(M)+i\eta$. Then all of the horizontal cylinders from $M$ persist on $M'$, but their union is not all of $M'$. 
\end{sublem}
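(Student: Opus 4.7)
The plan is to handle the two assertions separately: persistence of the cylinders follows from Lemma \ref{L:preserve}, while non-coverage is established by contradiction using the hypothesis $\eta \notin \Tw(M,\cM)$.

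For persistence, observe that $i\eta \in \CP(M,\cM)\otimes\bC$ (since $\eta \in \CP(M,\cM)$ is real), so the deformation $M'$ is exactly the $t=1$ slice of the family $\Phi^{-1}(\Phi(M) + t\cdot i\eta)$ to which Lemma \ref{L:preserve} applies; for $\eta$ sufficiently small the conclusion is that every horizontal cylinder of $M$ persists on $M'$. Concretely, each core curve $\gamma_i$ of a horizontal cylinder $C_i$ satisfies $\eta(\gamma_i)=0$ by the definition of $\CP$, so its period on $M'$ is the same real number as on $M$ and $\gamma_i$ remains horizontal on $M'$.

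For non-coverage, I would argue by contradiction: suppose the persistent horizontal cylinders $C_1', \ldots, C_n'$ cover $M'$. Then they are the maximal horizontal cylinders of $M'$, so $M'$ is horizontally periodic and the boundary of each $C_i'$ is a union of saddle connections that are horizontal on $M'$. Since $\eta \notin \Tw(M,\cM)$, pick a horizontal saddle connection $\sigma$ on $M$ with $\eta(\sigma) \neq 0$; horizontal periodicity of $M$ forces $\sigma$ to lie on the boundary of some $C_i$. Using persistence of saddle connections under small deformations (see the Remark in Section \ref{S:RD}), $\sigma$ has a representative on $M'$ whose period is $\int_\sigma \omega + i\eta(\sigma)$, so the imaginary part is nonzero and this representative is not horizontal on $M'$. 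On the other hand, the marking identification carries the topological boundary of $C_i$ to the topological boundary of $C_i'$, so this representative of $\sigma$ lies on the boundary of $C_i'$ and must therefore be horizontal on $M'$---a contradiction.

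The main technical point to justify is that the (isotopy classes of) saddle connections bounding the persistent cylinder $C_i'$ on $M'$ are precisely the (isotopy classes of) saddle connections bounding the original cylinder $C_i$ on $M$. This is exactly the kind of statement handled by the marking framework spelled out in the Remark in Section \ref{S:RD}, applied to the finite set of saddle connections forming the boundaries of the horizontal cylinders of $M$; once this identification is invoked, the contradiction above is immediate and both assertions of the sublemma follow.
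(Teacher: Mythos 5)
Your persistence argument is correct and matches the paper. The gap is in the non-coverage step, specifically the claim that ``the marking identification carries the topological boundary of $C_i$ to the topological boundary of $C_i'$'' and that this is ``handled by the marking framework.'' The marking framework (the Remark in Section \ref{S:RD}) only says that a fixed finite set of saddle connections on $M$ persists, as saddle connections with well-defined isotopy classes, on nearby $M'$. It says nothing about which saddle connections bound a given persistent cylinder: that is a geometric statement, not a topological one. In general the boundary of $C_i'$ is \emph{not} isotopic to the boundary of $C_i$ --- for instance a deformation can tilt a boundary saddle connection of $C_i$, so that $C_i'$ is bounded by fewer saddle connections and no longer reaches the singularities that were on $\partial C_i$. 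So the quoted claim needs an argument, and invoking the marking framework does not supply one.

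What is true is that \emph{under your contradiction hypothesis} (the $C_i'$ cover $M'$), the boundary saddle connections of $M$ must all remain horizontal on $M'$. But the only way I see to prove this is exactly the paper's quantitative observation: the persistent representative of $\sigma$ on $M'$ has vertical component $\eta(\sigma)$, which for $\eta$ small is strictly between $0$ and every cylinder height; a saddle connection contained in the (closed) union of the cylinders either lies on their boundaries (hence is horizontal) or crosses the interior of some cylinder (hence has vertical displacement at least that cylinder's height), and neither is compatible with $0 < |\eta(\sigma)| < \min_i h_i$. So the technical point you flag at the end is not a routine marking issue; it is the heart of the argument, and justifying it essentially reproduces the paper's proof. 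The paper sidesteps the boundary-identification question entirely by arguing directly that the saddle connection with small nonzero vertical part cannot lie in the union of the persistent cylinders, which is cleaner than the contradiction route you take.
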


\begin{proof}[\eb{Proof of Sublemma.}]
Because $i\eta$ is in the complexification of the cylinder preserving space, Lemma \ref{L:preserve} gives that the horizontal cylinders from $M$ persist on $M'$, so it remains only to show that their union is not all of $M'$. 

Because $\eta$ is not in the twist space, it takes a nonzero value on some horizontal saddle connection. On $M'$, this saddle connection is no longer horizontal, but instead acquires some very small vertical part which may be assumed to be vastly smaller than the height of any horizontal cylinder on $M$. 

Thus it can be arranged for $M'$ to have a saddle connection whose vertical part is vastly smaller than the heights on $M'$ of the cylinders which have persisted from $M$. The union of the cylinders on this $M'$ which have persisted from $M$ is not all of $M'$; in particular, the union cannot contain the saddle connection under consideration. 
\end{proof}

\begin{proof}[\eb{Proof of Lemma \ref{L:key}.}]
By assumption we may find $\eta \in \CP(M,\cM) \setminus \Tw(M,\cM)$. The sublemma thus gives a deformation $M'\in \cM$ of $M$ where the horizontal cylinders of $M$ persist but do not cover the whole surface. By Theorem \ref{T:SW} we may find $M''$ in the horocycle flow orbit closure of $M'$ which is horizontally periodic. The horizontal cylinders on $M'$ persist on $M''$; in particular, for each horizontal cylinder on $M$ there is a corresponding horizontal cylinder on $M''$, but these horizontal cylinders do not cover $M''$. So $M''$ must have more horizontal cylinders than $M$.   
\end{proof}

To apply Lemma \ref{L:key} we will need to control the size of the twist space and the cylinder preserving space. Controlling the size of the cylinder preserving space is straightforward. 

\begin{lem}\label{L:codimd}
Suppose that $M\in \cM$ is horizontally periodic. Suppose that the core curves of the horizontal cylinders on $M$ span a subspace of $T^M(\cM)^*$ of dimension $d$. Then $\CP(M,\cM)$ has codimension $d$ in $T_\bR^M(\cM)$. 
\end{lem}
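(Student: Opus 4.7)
The plan is to realize $\CP(M,\cM)$ as the annihilator in $T_\bR^M(\cM)$ of the real subspace spanned by the functionals $\alpha_i^*$, where $\alpha_1,\dots,\alpha_n$ are the core curves of the horizontal cylinders. Then a rank/nullity count will give the codimension $d$, once one checks that the real and complex spans of the $\alpha_i^*$ have the same dimension.

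First I would set $W_\bC = \span_\bC(\alpha_1^*,\dots,\alpha_n^*) \subset T^M(\cM)^*$, which is the subspace in the hypothesis, so $\dim_\bC W_\bC = d$. Because each $\alpha_i$ lies in $H_1(M,\Sigma;\bZ)$, the functional $\alpha_i^*$ takes real values on the real form $T_\bR^M(\cM) = T^M(\cM)\cap H^1(M,\Sigma;\bR)$. Let $W_\bR = \span_\bR(\alpha_{i}^*|_{T_\bR^M(\cM)})\subset T_\bR^M(\cM)^*$. The key subsidiary claim is that $\dim_\bR W_\bR = d$. This is exactly the multi-variable version of the observation already used in the Remark preceding the definition of $\cM$-parallel: since $T^M(\cM) = T_\bR^M(\cM)\otimes_\bR \bC$, a system of real-valued functionals on $T_\bR^M(\cM)$ is $\bR$-linearly independent in $T_\bR^M(\cM)^*$ if and only if its complex extensions are $\bC$-linearly independent in $T^M(\cM)^*$. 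Equivalently, if one picks an $\bR$-basis of $T_\bR^M(\cM)$ and writes the $\alpha_i^*$ as rows of a real matrix, then this matrix has the same rank over $\bR$ and over $\bC$.

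Next, directly from the definition, $\CP(M,\cM)$ is the set of $\eta\in T_\bR^M(\cM)$ with $\alpha_i^*(\eta)=0$ for all $i=1,\dots,n$, i.e.\ the annihilator of $W_\bR$ in $T_\bR^M(\cM)$. By the rank-nullity theorem, this annihilator has codimension equal to $\dim_\bR W_\bR$, which by the previous paragraph is $d$. Hence $\CP(M,\cM)$ has codimension $d$ in $T_\bR^M(\cM)$, as required.

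The only nontrivial point is the identification of real and complex dimensions of the span of the $\alpha_i^*$, and this is standard linear algebra given that $T^M(\cM)$ is the complexification of $T_\bR^M(\cM)$; everything else is a straightforward unwinding of definitions. I do not anticipate any obstacle in making this argument precise.
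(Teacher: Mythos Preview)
Your argument is correct and follows the same approach as the paper: identify $\CP(M,\cM)$ as the common zero set of the functionals $\alpha_i^*$ on $T_\bR^M(\cM)$, then count codimension. The paper's proof is a two-line version that leaves implicit the real-versus-complex dimension check you spell out; your explicit justification via $T^M(\cM)=T_\bR^M(\cM)\otimes_\bR\bC$ is exactly the content of the earlier Remark you cite, so nothing is missing.
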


\begin{proof}
Suppose that the core curves of the horizontal cylinders on $M$ are $\alpha_1, \ldots, \alpha_n$. Then the subspace $\CP(M,\cM)$ of $T_\bR^M(\cM)$ is defined by the equations $\alpha_i^*=0, i=1, \ldots, n$.
\end{proof}

Controlling the size of the twist space is more difficult. For this we require 

\begin{thm}[Avila-Eskin-M\"oller, Thm. 1.4 in \cite{AEM}]\label{T:AEM}
$p(T(\cM))$ is symplectic. 
\end{thm}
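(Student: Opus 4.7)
The plan is to show that the intersection form $\omega$ on $H^1(X;\bC)$ restricts to a nondegenerate form on $p(T^M(\cM))$ for every $M\in\cM$. Writing $N_M$ for the radical of $\omega\big|_{p(T^M(\cM))}$, the goal is to prove $N_M=0$. Since $\cM$ is $SL(2,\bR)$--invariant, so is $T(\cM)$, and hence so is $p(T(\cM))$ as a subbundle of the Hodge bundle over $\cM$ (with the Kontsevich--Zorich cocycle as its flat structure). Because $\omega$ is preserved by both $SL(2,\bR)$ and by the Gauss--Manin connection, the radicals $N_M$ assemble into an $SL(2,\bR)$-- and KZ--invariant subbundle $N\subset p(T(\cM))$, and it suffices to show this $N$ is the zero subbundle.

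Next I would bring in the Hodge norm $\|v\|_H^2=\omega(v,\star\bar v)$ on each fiber, where $\star$ is the Hodge star for the underlying complex structure, and form the Hodge-orthogonal complement $W$ of $N$ inside $p(T(\cM))$, giving a smooth splitting $p(T(\cM))=N\oplus W$. The central claim is that $W$ is itself invariant under the KZ cocycle; granted this, $W$ is automatically symplectic (a Hodge complement of an isotropic subbundle) and contains the tautological plane $\span_\bR(\Re[\omega],\Im[\omega])$, which is expanded/contracted by $e^{\pm t}$ under the Teichm\"uller geodesic flow. An Oseledets / Lyapunov argument then rules out nonzero $v\in N$: being in the radical forces $v$ to have trivial symplectic pairing against every vector in $p(T(\cM))$, hence to lie in the zero-exponent piece of $p(T(\cM))$ for the KZ cocycle, while at the same time the $SL(2,\bR)$ orbit of $v$ (or more precisely the action of $a_t$) produces Hodge-norm dynamics incompatible with sitting only in the zero-exponent part.

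The main obstacle, and the place where all the genuine analytic work lies, is establishing KZ-invariance of the Hodge-orthogonal complement $W$. For this I would invoke Forni's semisimplicity theorem for the Kontsevich--Zorich cocycle, which furnishes \emph{some} measurable KZ-invariant complement to $N$ inside $p(T(\cM))$, and then identify it with $W$ by computing the second fundamental form of $N\hookrightarrow p(T(\cM))$ using Forni's variational formulas for $\|\cdot\|_H$ along $SL(2,\bR)$ directions together with the positivity of the Hodge form on $(1,0)$--classes. Everything downstream---the invariance of $N$, the symplectic and Lyapunov bookkeeping, and the final contradiction---is comparatively formal once this identification is in hand; without it, one is stuck with only a measurable splitting that does not interact usefully with the symplectic form.
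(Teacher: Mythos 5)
This statement is not proved in the paper at all: it is imported as a black box, with the attribution ``Avila--Eskin--M\"oller, Thm.~1.4 in \cite{AEM}.'' So there is no ``paper's own proof'' to compare against. Your job here would simply have been to notice the citation; the paper itself uses the theorem only via Lemma~\ref{L:iso} and Corollary~\ref{C:cd}.

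That said, evaluating your sketch on its merits as an attempt at the AEM theorem: the skeleton is in the right ballpark (look at the radical $N$ of the symplectic form on $p(T(\cM))$, note it is flat and $SL(2,\bR)$--invariant, try to split it off and derive a Lyapunov contradiction), but two of the load-bearing steps are not sound. First, you invoke ``Forni's semisimplicity theorem'' to produce an invariant complement to $N$. There is no such theorem of Forni; semisimplicity of the Kontsevich--Zorich cocycle over an $SL(2,\bR)$--invariant measure is a \emph{consequence} of the very dichotomy you are trying to prove (it is the main theorem of \cite{AEM}, and the general algebraic-hull semisimplicity is due to Filip, which postdates AEM). Using it here is circular. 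The actual AEM engine is a different and more primitive statement: the Hodge-orthogonal complement of an $SL(2,\bR)$--invariant subbundle of the Hodge bundle is itself $SL(2,\bR)$--invariant, proved directly from Forni's variational formulas for the Hodge norm along the geodesic flow. That is not a semisimplicity theorem and must be cited (or reproved) as such.

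Second, the concluding step is a gap. You assert that lying in the radical ``forces $v$ to have trivial symplectic pairing against every vector in $p(T(\cM))$, hence to lie in the zero-exponent piece.'' The inference ``pairs trivially inside $V$ $\Rightarrow$ zero exponent'' is false as stated: the Oseledets $\lambda$ and $-\lambda$ pieces pair nondegenerately in all of $H^1$, but there is no a priori reason they do so inside the subbundle $V=p(T(\cM))$ unless you already know $V$ is symplectic or semisimple --- which is the statement to be proved. The correct route in AEM is to show that an invariant isotropic subbundle is \emph{Forni-degenerate} (zero derivative of log Hodge norm), hence has all exponents zero, and then to rule out $N\ne 0$ by comparing against the tautological plane. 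For the narrower corollary actually used in this paper, the clean argument is: AEM's Theorem~1.4 says $\omega|_{p(T(\cM))}$ is either zero or nondegenerate; since $p(T(\cM))$ contains $\span_\bR(\Re[\omega],\Im[\omega])$, on which the symplectic pairing equals the (nonzero) flat area, the form is nonzero, hence nondegenerate. Your write-up never makes this reduction explicit, and without it the Lyapunov bookkeeping does not close.
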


\begin{lem}\label{L:iso}
Let $M \in \cM$ be horizontally periodic. Then $p(\Tw(M,\cM))$ is isotropic. 
\end{lem}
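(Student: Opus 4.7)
The plan is to identify $p(\eta_{C_i})$ concretely as (a scalar multiple of) the Poincar\'e dual of the core curve $\alpha_i$, and then use the fact that all horizontal core curves are parallel to each other.

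First, by the lemma characterizing the twist space, $\Tw(M,\cM) \subset \span_\bR(\eta_{C_1}, \ldots, \eta_{C_n})$, where $C_1, \ldots, C_n$ are the horizontal cylinders. So it suffices to show that the image $p(\span_\bR(\eta_{C_i}))$ is isotropic with respect to the symplectic form on $H^1(M; \bR)$ (given by cup product, i.e.\ by the intersection pairing on $H_1$ via Poincar\'e duality). Since $\eta_{C_i} = h_i I_{\alpha_i}$ by Lemma \ref{L:i}, it is enough to show that the classes $p(I_{\alpha_i})$ pair trivially with each other under the symplectic form.

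Next I would identify $p(I_{\alpha_i})$ geometrically. By construction $I_{\alpha_i}$ vanishes on any relative class disjoint from the interior of $C_i$ and equals $1$ on a cross curve of $C_i$. Interpreted as a functional on \emph{absolute} homology $H_1(M;\bZ)$, this is precisely the algebraic intersection number with $\alpha_i$: indeed, any absolute $1$-cycle can be homotoped to avoid the singularities and then, modulo absolute homology, be pushed off $C_i$ except for transverse passes through the core, each contributing $\pm 1$ to both quantities. Hence $p(I_{\alpha_i})$ is the Poincar\'e dual $\mathrm{PD}(\alpha_i)$ of the core curve.

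Now compute the symplectic pairing: for $i,j$,
\[
\omega\bigl(p(\eta_{C_i}), p(\eta_{C_j})\bigr) \;=\; h_i h_j \cdot \omega\bigl(\mathrm{PD}(\alpha_i), \mathrm{PD}(\alpha_j)\bigr) \;=\; h_i h_j \cdot (\alpha_i \cdot \alpha_j).
\]
Because $\alpha_i$ and $\alpha_j$ are both horizontal closed geodesics on the flat surface $M$, they are disjoint (or equal), so the algebraic intersection number $\alpha_i \cdot \alpha_j$ vanishes. Therefore every pair in $\{p(\eta_{C_i})\}$ is $\omega$-orthogonal, so $p\bigl(\span_\bR(\eta_{C_i})\bigr)$ is isotropic, and so too is its subspace $p(\Tw(M,\cM))$.

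The only real work is the identification $p(I_{\alpha_i}) = \mathrm{PD}(\alpha_i)$; once that is in hand, isotropy is immediate from the obvious flat-geometric fact that parallel simple closed geodesics have zero algebraic intersection. No invocation of the Avila--Eskin--M\"oller symplecticity theorem is needed for isotropy itself (that theorem enters only to guarantee that $p(T(\cM))$ carries a symplectic form, rather than merely a degenerate skew form); here the pairing is the ambient one on $H^1(M;\bR)$.
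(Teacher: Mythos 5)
Your proof is correct and follows essentially the same route as the paper: reduce via Corollary \ref{C:tw} and Lemma \ref{L:i} to the span of the $p(\eta_{C_i})$, identify $p(\eta_{C_i})$ up to scalar with the Poincar\'e dual of the core curve $\alpha_i$, and conclude isotropy from the disjointness of the horizontal core curves. Your closing remark that Avila--Eskin--M\"oller is not needed for isotropy itself (only for the dimension count in Corollary \ref{C:cd}) is an accurate and worthwhile observation.
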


\begin{proof}
Let $C_1, \ldots, C_n$ be the horizontal cylinders on $M$. By Corollary \ref{C:tw}, it suffices to prove that 
$$\span_\bR(p(\eta_{C_i}))_{i=1}^n$$
is isotropic. Lemma \ref{L:i} gives that $\eta_{C_i}$ is, up to a scalar multiple, equal to intersection number with the core curve of $C_i$. Since the core curves of the $C_i$ are disjoint, the result follows. 
\end{proof}

\begin{cor}\label{C:cd}
Let $k=\frac12 \dim_\bC p(T(\cM))$. Then $\Tw(M,\cM)$ has codimension at least $k$ in $T_\bR^M(\cM)$.
\end{cor}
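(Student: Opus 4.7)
The plan is to combine the isotropy statement from Lemma \ref{L:iso} with the symplecticity statement from Theorem \ref{T:AEM}, and transfer a dimension bound from $p(T_\bR^M(\cM))$ back up to $T_\bR^M(\cM)$ using a standard quotient argument.

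First I would record the basic dimension bookkeeping. Because $\cM$ is cut out locally by real linear equations, $T(\cM)$ is the complexification of $T_\bR^M(\cM)$; since the map $p$ is defined over $\bR$, the image $p(T(\cM))$ is then the complexification of $p(T_\bR^M(\cM))$. Hence $\dim_\bR p(T_\bR^M(\cM)) = \dim_\bC p(T(\cM)) = 2k$. Theorem \ref{T:AEM} says that $p(T(\cM))$ is symplectic with respect to (the complex-bilinear extension of) the intersection form; restricting to the real part, $p(T_\bR^M(\cM))$ is a real symplectic vector space of dimension $2k$. By Lemma \ref{L:iso}, $p(\Tw(M,\cM))$ is an isotropic subspace of this real symplectic space, so
\[
\dim_\bR p(\Tw(M,\cM)) \;\leq\; k.
\]

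Now the restriction $p\colon T_\bR^M(\cM)\to p(T_\bR^M(\cM))$ is surjective, and carries $\Tw(M,\cM)$ onto $p(\Tw(M,\cM))$, so it descends to a surjection
\[
T_\bR^M(\cM)/\Tw(M,\cM)\;\twoheadrightarrow\;p(T_\bR^M(\cM))/p(\Tw(M,\cM)).
\]
The codimension of $p(\Tw(M,\cM))$ inside $p(T_\bR^M(\cM))$ is at least $2k-k = k$, so by surjectivity the codimension of $\Tw(M,\cM)$ inside $T_\bR^M(\cM)$ is also at least $k$, which is the desired conclusion.

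Nothing in this argument is genuinely difficult; the only point that requires care is the observation that the symplectic condition from Theorem \ref{T:AEM} really does give a real symplectic structure on $p(T_\bR^M(\cM))$, so that the classical inequality $\dim(\text{isotropic})\leq \frac12\dim(\text{symplectic})$ applies. All the analytic content sits in Lemma \ref{L:iso} and in the Avila--Eskin--M\"oller theorem; the corollary itself is a short linear-algebra consequence.
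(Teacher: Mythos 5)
Your argument is correct and is essentially the same as the paper's proof, which likewise uses Lemma \ref{L:iso} to bound $\dim p(\Tw(M,\cM)) \leq k$ and then transfers the codimension bound from $p(T_\bR^M(\cM))$ back to $T_\bR^M(\cM)$. You simply spell out the quotient-surjectivity step that the paper leaves implicit in the phrase ``It follows that.''
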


\begin{proof}
The maximal dimension of an isotropic subspace of $p(T(\cM))$ is $k$. Hence $p(\Tw(M,\cM))$ has codimension at least $k$ in $p(T_\bR^M(\cM))$. It follows that $\Tw(M,\cM)$ has codimension at least $k$ in $T_\bR^M(\cM)$.
\end{proof}

\begin{cor}\label{C:more}
Let $M\in \cM$ be horizontally periodic, and suppose $k= \frac12\dim_\bC p(T(\cM))$. Suppose that the core curves of the horizontal cylinders on $M$ span a subspace of $T^M(\cM)^*$ of dimension $d$. If $d<k$, then there is a translation surface $M'\in \cM$ with more horizontal cylinders than $M$. 
\end{cor}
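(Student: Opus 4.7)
\textbf{Proof plan for Corollary \ref{C:more}.} The plan is to apply Lemma \ref{L:key}, so the entire task reduces to verifying the strict containment $\Tw(M,\cM) \subsetneq \CP(M,\cM)$ under the hypothesis $d < k$. The work has essentially been done in the preceding lemmas, so what remains is a codimension count inside $T_\bR^M(\cM)$.

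First I would observe that Lemma \ref{L:codimd} pins down the codimension of $\CP(M,\cM)$ in $T_\bR^M(\cM)$ exactly: it equals $d$, since the core curves of the horizontal cylinders span a $d$-dimensional subspace of $T^M(\cM)^*$ and $\CP(M,\cM)$ is cut out by the vanishing of those linear functionals. Next, Corollary \ref{C:cd} gives a lower bound on the codimension of $\Tw(M,\cM)$ in $T_\bR^M(\cM)$, namely at least $k$. This bound in turn comes from the isotropy statement of Lemma \ref{L:iso}, which rests on the Avila–Eskin–Möller symplecticity theorem (Theorem \ref{T:AEM}).

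Combining these two facts with the hypothesis $d<k$ yields
\[
\operatorname{codim}\Tw(M,\cM) \ge k > d = \operatorname{codim}\CP(M,\cM).
\]
Since $\Tw(M,\cM)$ is always contained in $\CP(M,\cM)$ (every class vanishing on all horizontal saddle connections certainly vanishes on the horizontal core curves, which are unions of such saddle connections), this strict inequality of codimensions forces $\Tw(M,\cM)\subsetneq \CP(M,\cM)$.

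With the strict containment established, Lemma \ref{L:key} directly produces a horizontally periodic translation surface $M'\in\cM$ with strictly more horizontal cylinders than $M$, which is even stronger than what the corollary asks for. There is no real obstacle here: all the substantive input (the existence of periodic surfaces in horocycle orbit closures from Theorem \ref{T:SW}, the symplecticity of $p(T(\cM))$, and the sublemma controlling persistence of cylinders under a purely imaginary perturbation in $\CP(M,\cM)$) has already been assembled, and this corollary is just the bookkeeping step that packages the dimension inequality $d<k$ into the hypothesis of Lemma \ref{L:key}.
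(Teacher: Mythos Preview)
Your proof is correct and follows exactly the same approach as the paper: use Lemma \ref{L:codimd} and Corollary \ref{C:cd} to compare codimensions, conclude $\Tw(M,\cM)\subsetneq\CP(M,\cM)$ when $d<k$, and then invoke Lemma \ref{L:key}. The paper's version is just more terse.
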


\begin{proof}
If $d<k$, then by Lemma \ref{L:codimd} and Corollary \ref{C:cd}, the cylinder preserving space at $M$ must be strictly larger than the twist space at $M$. Hence the result follows by Lemma \ref{L:key}. 
\end{proof}

\begin{proof}[\eb{Proof of Theorem \ref{T:manyC}.}]
Pick $M\in \cM$ with the maximal number of parallel cylinders. Without loss of generality, we may assume that the cylinders are horizontal. In this case $M$ must be horizontally periodic, or else Theorem \ref{T:SW} would produce a horizontally periodic translation surface in the $h_t$--orbit closure, which  would necessarily contain more horizontal cylinders. 

By Corollary \ref{C:more}, the core curves of the horizontal cylinders on $M$ span a subspace of $p(T^M(\cM))^*$ of dimension at least $k$. In particular, there must be at least $k$ equivalence classes of $\cM$--parallel horizontal cylinders on $M$. 

This shows the first statement of Theorem \ref{T:manyC}. The second (converse) statement follows directly from Theorem \ref{T:AEM}: the core curves of parallel cylinders spans an isotropic subspace of $p(T^M(\cM))^*$, and any isotropic subspace has dimension at most $k$. Since the core curves of the cylinders are absolute homology classes, their span in $p(T^M(\cM))$ is isomorphic to their span in $T^M(\cM)$. 
\end{proof}


\section{Proof of Theorem \ref{T:CPconv}}\label{S:conv} 

From Theorem \ref{T:manyC}, we will need only

\begin{cor}\label{C:atleast2}
If $\frac12 \dim_\bC p(T(\cM))>1$ then there is a translation surface in $\cM$ with at least two equivalence classes of $\cM$-parallel horizontal cylinders.
\end{cor}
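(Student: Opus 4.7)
The plan is to derive this corollary essentially as an immediate consequence of Theorem \ref{T:manyC} together with the characterization of $\cM$--parallel cylinders via $\cM$--collinearity of core curves. Set $k = \frac12 \dim_\bC p(T(\cM))$, so by hypothesis $k \geq 2$.

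First, I would apply Theorem \ref{T:manyC} to produce a horizontally periodic translation surface $M \in \cM$ whose horizontal core curves span a subspace $W \subset T^M(\cM)^*$ of dimension exactly $k$. Next, I would recall the lemma in Section \ref{S:RD} stating that two cylinders on $M \in \cM$ are $\cM$--parallel if and only if their core curves are $\cM$--collinear, i.e., scalar multiples of one another in $T^M(\cM)^*$. In particular, if $\cC_1, \ldots, \cC_s$ are the equivalence classes of $\cM$--parallel horizontal cylinders on $M$, then the core curves within each $\cC_j$ all lie on a single line in $T^M(\cM)^*$, so the span of the horizontal core curves of $M$ has dimension at most $s$.

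Combining these two observations, I get $s \geq \dim W = k \geq 2$, so $M$ has at least two equivalence classes of $\cM$--parallel horizontal cylinders, which is precisely the claim. There is no real obstacle here: the content was already packaged into Theorem \ref{T:manyC}, and the corollary just extracts the qualitative consequence for the $k \geq 2$ case. The only thing to be careful about is the translation between the two formulations -- that a collection of vectors confined to a single line spans at most a one-dimensional subspace -- but this is linear algebra.
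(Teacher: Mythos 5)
Your proof is correct and matches the paper's intent: the paper gives no separate proof of this corollary (it just says ``From Theorem~\ref{T:manyC}, we will need only'' before the statement), and the intended derivation is exactly the dimension count you give, which already appears verbatim inside the paper's proof of Theorem~\ref{T:manyC} in the line ``In particular, there must be at least $k$ equivalence classes of $\cM$--parallel horizontal cylinders on $M$.'' You have simply made explicit the linear-algebra step — each $\cM$--parallel equivalence class contributes a single line in $T^M(\cM)^*$, so $k$ independent core-curve functionals force at least $k\geq 2$ classes — that the paper leaves implicit.
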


\begin{lem}\label{L:ds}
Suppose $\cC_1, \ldots, \cC_s$ are some equivalence classes of $\cM$-parallel horizontal cylinders on $M\in \cM$, whose core curves span a subspace of $T^M(\cM)^*$ of dimension $d$. (There might be additional equivalence classes of $\cM$-parallel horizontal cylinders on $M$.) The set of deformations of $M$ where these $s$ equivalence classes of $\cM$-parallel horizontal cylinders remain horizontal is locally a manifold of real codimension $d$. 
\end{lem}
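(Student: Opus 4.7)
The plan is to translate the horizontality condition into real-linear equations on $T^M(\cM)$ via local period coordinates, and then count the number of independent such equations.

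First, for any horizontal cylinder with core curve $\alpha$, horizontality is preserved under a deformation $M'$ with displacement $\xi = \Phi(M') - \Phi(M) \in T^M(\cM)$ precisely when $\Im\,\alpha^*(\xi) = 0$, since the period of $\alpha$ is already real at $M$. Hence the set of deformations in which every cylinder of $\cC_1 \cup \cdots \cup \cC_s$ remains horizontal corresponds, under $\Phi$, to the real-linear subspace $L \subset T^M(\cM)$ cut out by the equations $\Im\,\alpha^*(\xi)=0$ as $\alpha$ ranges over the core curves of these cylinders. Within each equivalence class $\cC_i$, the core curves are $\cM$-collinear via \emph{real} scalars (by the remark in Section \ref{S:RD}), so cylinders in the same class contribute redundant equations.

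Next I would count the effective number of equations. Let $W \subset T^M(\cM)^*$ be the span of all the core-curve functionals, so $\dim_\bC W = d$, and fix a $\bC$-basis $w_1,\dots,w_d$ of $W$. The defining equations of $L$ then reduce to $\Im\,w_j(\xi) = 0$ for $j=1,\dots,d$, giving $d$ real-linear conditions on $T^M(\cM)$ viewed as a real vector space. The crux of the argument is to verify that these $d$ conditions are $\bR$-linearly independent. Suppose $\sum a_j \Im\,w_j = 0$ with $a_j \in \bR$, and set $w = \sum a_j w_j \in W$. Then $\Im\,w(\xi)=0$ for all $\xi\in T^M(\cM)$, and the $\bC$-linearity of $w$ yields $\Im\,w(i\xi)=\Re\,w(\xi)=0$; hence $w=0$, and by $\bC$-linear independence of the $w_j$ all $a_j$ vanish.

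Consequently $L$ has real codimension exactly $d$ in $T^M(\cM)$, and since $\Phi$ is a local diffeomorphism, its image is locally a submanifold of $\cM$ of real codimension $d$ near $M$. There is essentially no obstacle in this argument; the only conceptual point worth flagging is that we impose only the vanishing of the imaginary part of each complex-linear functional, so a complex $d$-dimensional span of functionals yields $d$ rather than $2d$ real equations.
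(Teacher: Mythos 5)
Your approach is the same as the paper's, which simply records the equations $\Im(\alpha_i^*)=0$ (one $\alpha_i$ per equivalence class) in local period coordinates and leaves the codimension count implicit; you usefully spell that count out. However, the reduction step has a gap. You pass from the defining equations $\Im\,\alpha^*(\xi)=0$ to the equations $\Im\,w_j(\xi)=0$ for an arbitrary $\bC$-basis $w_1,\dots,w_d$ of $W=\span_\bC\{\alpha^*\}$, but this equivalence fails for a general choice of basis. For instance, if $d=1$ and one takes $w_1=i\alpha_1^*$, then $\Im\,w_1=\Re\,\alpha_1^*$, which cuts out $\{\Re\,\alpha_1^*(\xi)=0\}$ rather than $L=\{\Im\,\alpha_1^*(\xi)=0\}$.

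The fact that rescues the argument is that the $\alpha_i^*$ come from integral homology classes and hence take real values on $T^M_\bR(\cM)$. Consequently the $\bR$-span $W_\bR$ of the $\alpha_i^*$ satisfies $W_\bR\cap iW_\bR=\{0\}$, so $W=W_\bR\otimes_\bR\bC$ and $\dim_\bR W_\bR=\dim_\bC W=d$. Choosing $w_1,\dots,w_d$ to be an $\bR$-basis of $W_\bR$ (which is automatically a $\bC$-basis of $W$) makes the reduction correct, and then your $\bR$-linear independence argument — which is fine as written and does not itself need the $w_j$ to be real — finishes the proof. Without that restriction on the basis, your argument shows the $d$ equations $\Im\,w_j(\xi)=0$ are independent, but does not show they define $L$.
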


\begin{proof}
Let $\alpha_i$ be the core curve of a cylinder in $\cC_i$. In local period coordinates, the set of deformations in question is defined by $\Im(\alpha_i^*)=0, i=1,\ldots, s.$
\end{proof}

\begin{cor}\label{C:notmany}
The set of translation surfaces in $\cM$ with at least two equivalence classes of $\cM$-parallel horizontal cylinders is the countable union of immersed submanifolds of real codimension at least two. 
\end{cor}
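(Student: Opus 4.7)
\textbf{Proof proposal for Corollary \ref{C:notmany}.} The plan is to index the locus in question by pairs of integral homology classes representing core curves, and to apply Lemma \ref{L:ds} to each resulting piece.

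First, I would verify the local codimension claim. Fix $M$ in the locus, and pick representatives $C_1, C_2$ from two distinct equivalence classes $\cC_1 \neq \cC_2$ of $\cM$--parallel horizontal cylinders on $M$, with core curves $\alpha_1, \alpha_2 \in H_1(M;\bZ)$. Since $C_1, C_2$ are not $\cM$--parallel, the lemma characterizing $\cM$--parallelism of cylinders via $\cM$--collinearity of core curves forces $\alpha_1$ and $\alpha_2$ to span a real $2$--dimensional subspace of $T^M(\cM)^*$. Lemma \ref{L:ds} applied with $s = 2$ and $d = 2$ then shows that the set of deformations $M' \in \cM$ near $M$ on which $\alpha_1$ and $\alpha_2$ persist as horizontal cylinders is a submanifold of real codimension two.

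Next, to assemble these local pieces into a countable collection, I would cover $\cM$ by countably many simply connected open charts $\{\cU_n\}$ using second countability. On each $\cU_n$, the marking discussion of Section \ref{S:RD} trivializes $H_1(-,\Sigma;\bZ)$, so the set of possible pairs $(\alpha, \beta)$ forms a fixed countable set. For each such pair, define
$$W^n_{\alpha,\beta} = \{M' \in \cU_n \mid \alpha, \beta \text{ are core curves of horizontal cylinders on } M' \text{ in distinct } \cM\text{--parallel classes}\}.$$
The defining conditions -- horizontality of $\alpha$ and $\beta$, persistence of the two cylinders, and non-$\cM$--collinearity of $\alpha,\beta$ in the tangent bundle of $\cM$ -- are each open (the last because $T(\cM)$ is a flat bundle and non-collinearity of two fixed functionals is preserved under the local trivialization). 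Hence $W^n_{\alpha,\beta}$ is an open subset of the codimension--two submanifold $\{\Im \alpha^* = \Im \beta^* = 0\} \cap \cU_n$, and is therefore itself an immersed submanifold of real codimension two. The locus of the corollary equals $\bigcup_{n,\alpha,\beta} W^n_{\alpha,\beta}$, a countable union of the required form.

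The only step requiring genuine thought is the codimension bound in the first paragraph, which reduces to the observation that distinct $\cM$--parallel equivalence classes have non-$\cM$--collinear core curves; this is essentially built into the definitions. Everything else is routine bookkeeping with Lemma \ref{L:ds} and second countability of $\cM$.
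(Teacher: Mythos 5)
Your proposal is correct and follows the same route as the paper, whose proof of this corollary is simply ``This follows immediately from Lemma~\ref{L:ds}.'' You supply the countability bookkeeping---covering $\cM$ by countably many marked charts and indexing local pieces by pairs of integral core-curve classes---that the paper leaves implicit, together with the observation that distinct $\cM$-parallel classes yield non-$\cM$-collinear core curves, so that Lemma~\ref{L:ds} applies with $d \geq 2$.
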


\begin{proof}
This follows immediately from Lemma \ref{L:ds}.
\end{proof}

\begin{proof}[\eb{Proof of Theorem \ref{T:CPconv}}]
By Corollary \ref{C:atleast2}, we may find a translation surface $M$ with at least two equivalence classes of $\cM$-parallel horizontal cylinders. Lemma \ref{L:ds} gives that there is a immersed submanifold of real codimension 1 where one of these equivalence classes of $\cM$-parallel cylinders remains horizontal. 

By Corollary \ref{C:notmany}, not all of these translation surfaces can have at least two equivalence classes of $\cM$-parallel horizontal cylinders. Therefore there is a deformation of $M$ with exactly one equivalence class of $\cM$-parallel horizontal cylinders. This equivalence class cannot cover all of the deformation, since it did not cover all of $M$. 

This deformation has at least one horizontal cylinder, but is not horizontally periodic. 
\end{proof}

\section{Two open problems}\label{S:open} 

The Cylinder Deformation Theorem leads naturally to two problems, both of which were already of interest before our work. 

\begin{prob}\label{p1} 
Find an algorithm which, given a translation surface, finds all the cylinders in any given direction. 
\end{prob}

We do not know if such an algorithm exists, however even solutions in special cases would be interesting. For a solution to be useful, the algorithm must be guaranteed to terminate. (It is easy to give algorithms which can find cylinders in a given direction on a translation surface, but it seems hard in general to certify that all the cylinders in a given direction have been found.)   

\begin{prob}\label{p2}
Let $M$ be a surface which is \emph{completely parabolic}, which by definition means $M$ is completely periodic and that any two parallel cylinders on $M$ have rationally related moduli. Must $M$ be a lattice surface? 
\end{prob}

Problem \ref{p2} is in fact a problem of Smillie and Weiss and appeared as Question 5 of \cite{SWprobs}. For completely parabolic surfaces $M$, the Cylinder Deformation Theorem yields nothing. Even considering closures of cylinder deformations of $M$ gives absolutely no indication that $M$ has large orbit closure.

Problem \ref{p2} is an extreme special case of the following more open ended question: to what extent does the Cylinder Deformation Theorem account for all deformations of a translation surface $M$ which remain in the orbit closure of $M$?

%
%
%


\bibliography{mybib}{}
\bibliographystyle{amsalpha}
\end{document}